\newtheorem{theorem}{Theorem}[section]
\newtheorem{corollary}[theorem]{Corollary}
\newtheorem{lemma}[theorem]{Lemma}
\newtheorem{proposition}[theorem]{Proposition}
\theoremstyle{definition}
\newtheorem{definition}[theorem]{Definition}
\newtheorem{example}[theorem]{Example}
\newtheorem{notation}[theorem]{Notation}
\theoremstyle{remark}
\newtheorem{remark}[theorem]{Remark}
\numberwithin{equation}{section}
\newcommand{\defword}[1]{\textbf{#1}}
\renewcommand{\Re}{\operatorname{Re}}
\newcommand{\loc}{\mathrm{loc}}
\newcommand{\pp}[1]{\frac{\partial}{\partial #1}}
\newcommand{\spanop}{\operatorname{span}}
\newcommand{\ignorethis}[1]{}
\begin{document}
\author{Nathaniel Eldredge\footnote{
    School of Mathematical Sciences, University of Northern Colorado,
    501 20th St.\ Box 122, Greeley, CO 80639 USA.  Email
    \texttt{neldredge@unco.edu}.
}}
\title{Strong hypercontractivity and strong logarithmic Sobolev inequalities
  for log-subharmonic functions on stratified Lie groups}
\date{\today\ }
\maketitle

\begin{abstract}
  On a stratified Lie group $G$ equipped with hypoelliptic heat kernel
  measure, we study the behavior of the dilation semigroup on $L^p$ spaces of
  log-subharmonic functions.  We consider a notion of
  strong hypercontractivity and a strong logarithmic Sobolev
  inequality, and show that these properties are equivalent for any
  group $G$.  Moreover, if $G$ satisfies a classical logarithmic
  Sobolev inequality, then both properties hold.  This extends similar
  results obtained by Graczyk, Kemp and Loeb in the Euclidean setting.
\end{abstract}

\tableofcontents

\section{Introduction}\label{sec-intro}

\subsection{Background and motivation}\label{background-sec}

The topic of this paper is inspired by two papers of P.~Graczyk,
T.~Kemp, and J.-J.~Loeb \cite{gkl2010,gkl2015}, in which they
introduced notions of strong hypercontractivity and a strong
logarithmic Sobolev inequality for log-subharmonic functions on real
Euclidean space equipped with an appropriate probability measure, and
showed the intrinsic equivalence of these two notions.  In the present
paper, we extend their results to the setting of a stratified real Lie
group equipped with hypoelliptic heat kernel measure, which we view in
this context as a natural generalization of Euclidean space with
Gaussian measure.

As motivation, we begin by recalling the classical notion of
hypercontractivity and its relationship to the logarithmic Sobolev
inequality.  Let $\mu$ be standard Gaussian measure on $\mathbb{R}^n$,
and let $A$ be the self-adjoint Ornstein--Uhlenbeck operator on
$L^2(\mu)$ given by $Af(x) = -\Delta f(x) + x \cdot \nabla f(x)$. (For
this introduction, we will work formally and ignore domain
considerations.)  \emph{Hypercontractivity} is the statement that
\begin{multline}\label{HC-intro}
  \|e^{-tA} f\|_{L^q(\mu)} \le \|f\|_{L^p(\mu)}, \qquad
   t \ge t_N(p,q) := \frac{1}{2}\log\left(\frac{q-1}{p-1}\right), \\ f \in L^p(\mu), \, 1 < p \le q < \infty.
\end{multline}
This result was proved by E.~Nelson
\cite{nelson66,nelson-free-markov} with improvements by J.~Glimm \cite{glimm68}; see \cite{gross-lsi-survey-2006}
for a broad historical survey of results in this area.  Intuitively,
\eqref{HC-intro} says that after a certain characteristic time $t_N$,
known as ``Nelson's time,'', the Ornstein--Uhlenbeck semigroup
$e^{-tA}$ improves integrability from $L^p(\mu)$ to $L^q(\mu)$.  The
value of $t_N$ given in \eqref{HC-intro} is sharp.

In the same context, the \emph{logarithmic Sobolev inequality}, in its
``$L^1$ form,'' is the
statement that
\begin{multline}
  \label{LSI-intro}
  \int f \log f\,d\mu \le \frac{1}{2} \int \frac{|\nabla
    f|^2}{f}\,d\mu + \|f\|_{L^1(\mu)} \log \|f\|_{L^1(\mu)}, \\ f
  \in C^1(\mathbb{R}^n),\, f > 0
\end{multline}
or equivalently, in the perhaps more familiar ``$L^2$ form'',
\begin{multline}
  \label{LSI-intro-L2}
  \int |f|^2 \log |f|\,d\mu \le  \int |\nabla
    f|^2\,d\mu + \|f\|_{L^2(\mu)}^2 \log \|f\|_{L^2(\mu)}, \qquad f
  \in C^1(\mathbb{R}^n)
\end{multline}
where the equivalence follows by replacing $f$ by $|f|^2$ or vice
versa.  The earliest known version of this inequality is due to
A.J.~Stam \cite{stam59}, with another version discovered independently
by P.~Federbush \cite{federbush69}.  The form given here was obtained
by L.~Gross \cite{gross75}, who coined the name.  Gross also showed,
at the level of Markovian semigroups, that \eqref{LSI-intro} and
\eqref{HC-intro} are equivalent.  For instance, \eqref{LSI-intro-L2}
can be formally obtained from \eqref{HC-intro} by setting $p=2$, $q =
1+e^{2t}$, so that $t(p,q)=t$, and differentiating at $t=0$.

In 1983, S.~Janson \cite{janson-hypercontractivity-1983} discovered a
fascinating phenomenon of hypercontractivity in a complex setting.
Consider \eqref{HC-intro} with $n=2$ and identify $\mathbb{R}^2$ with
$\mathbb{C}$.  Janson showed that if we restrict the inequality
\eqref{HC-intro} to the space $\mathcal{H}$ of holomorphic functions, then we obtain the
following improvement:
\begin{multline}\label{complex-sHC-intro}
  \|e^{-tA} f\|_{L^q(\mu)} \le \|f\|_{L^p(\mu)}, \qquad 
   t \ge t_J(p,q) :=
  \frac{1}{2} \log \left(\frac{q}{p}\right), \\ f \in \mathcal{H}
  \cap L^p(\mu), \, 0 < p \le q < \infty.
\end{multline}
In this result, the critical time $t_J := \frac{1}{2}
\log\left(\frac{p}{q}\right)$ (``Janson's time'') is strictly smaller
than Nelson's time, so integrability improves faster if the initial
function $f$ is holomorphic.  Moreover, Janson's result has content
even if $p=1$ or $0 < p < 1$.  Inequalities of this form have come to
be called \emph{(complex) strong hypercontractivity}.  For alternate
proofs, extensions (including to $\mathbb{C}^n$), and related results,
see
\cite{carlen-integral-identities,janson-complex-hypercontractivity-1997,zhou-contractivity-1991}.

Part of the reason for this strengthening in the holomorphic case is
that, since holomorphic functions are harmonic, the action of $A$ on
holomorphic functions reduces to that of the first-order operator
$Ef(x) = x \cdot \nabla f(x)$, which is simply the generator of
dilations on $\mathbb{C}^n = \mathbb{R}^{2n}$.  This idea was pursued
by Graczyk, Kemp, and Loeb in \cite{gkl2010,gkl2015}, in which they
chose to explicitly consider the behavior of the dilation semigroup
$e^{-tE}$ on real Euclidean space $\mathbb{R}^n$.  In this setting,
the holomorphic functions are replaced by the log-subharmonic (LSH)
functions; i.e. those nonnegative functions $f$ for which $\log f$ is
subharmonic.  (This is effectively a generalization: when $n$ is even
and $f$ is holomorphic, then $|f|$ is log-subharmonic.)  In the case
of Gaussian measure $\mu$, they proved the following version of strong
hypercontractivity:
\begin{multline}\label{sHC-intro}
      \|e^{-tE} f\|_{L^q(\mu)} \le \|f\|_{L^p(\mu)}, \qquad
    t \ge t_J(p,q), \\ f \in LSH \cap L^p(\mu), \,
    0 < p \le q < \infty.
\end{multline}
They also obtained a corresponding \emph{strong logarithmic
  Sobolev inequality}:
\begin{equation}\label{sLSI-intro}
  \int f \log f\,d\mu \le \frac{1}{2} \int Ef\,d\mu +
  \|f\|_{L^1(\mu)} \log \|f\|_{L^1(\mu)}, \qquad  f \in LSH.
\end{equation}
The classical logarithmic Sobolev inequality \eqref{LSI-intro} is a
key ingredient in their proof; indeed, \eqref{LSI-intro} implies
\eqref{sLSI-intro} rather directly.  More generally, Graczyk, Kemp and
Loeb proved, for a wider class of measures $\mu$, that the statements
\eqref{sHC-intro} and \eqref{sLSI-intro}\footnote{Here, and for the
  rest of this section, the inequalities stated above should be read
  as including appropriate constants in the obvious places.} are
equivalent.  For instance, as with \eqref{HC-intro} and
\eqref{LSI-intro}, one may formally obtain \eqref{sLSI-intro} from
\eqref{sHC-intro} by taking $p=1$, $q = e^{2t}$ and differentiating at
$t=0$.

In some cases, the hypothesis $f \in LSH \cap L^p(\mu)$ in
\eqref{sHC-intro} must be strengthened to $f \in LSH \cap L^q(\mu)$; they call
this statement \emph{partial strong hypercontractivity}.  We discuss
this subtle issue in Remark \ref{why-Lq}, later in this paper.

Another line of research inspired by Janson's strong
hypercontractivity \eqref{complex-sHC-intro} was to study the
phenomenon in non-Euclidean settings.  In the papers
\cite{gross-hypercontractivity-complex,gross-strong-hypercontractivity},
L.~Gross considered the case of a complex Riemannian manifold $M$
equipped with an arbitrary smooth probability measure $\mu$, where the
Ornstein--Uhlenbeck operator $A$ is taken to be the generator of the
Dirichlet form $\mathcal{E}(f) = \int_M |\nabla f|^2\,d\mu$.  Gross
showed, under certain assumptions, that if $(M,\mu)$ satisfies the
logarithmic Sobolev inequality \eqref{LSI-intro} then it satisfies the
strong hypercontractivity property \eqref{complex-sHC-intro}.  In this
context, it still happens that $A$ reduces, on holomorphic functions,
to a first-order vector field, whose geometric and complex-analytic
properties become crucially important.

In the paper \cite{eldredge-gross-saloff-coste-dila}, L.~Gross,
L.~Saloff-Coste and the present author were interested in extending
the complex Riemannian results of
\cite{gross-hypercontractivity-complex,gross-strong-hypercontractivity}
into a complex sub-Riemannian setting.  We replaced the complex
Riemannian manifold $M$ with a stratified complex  Lie group $G$
equipped with a left-invariant sub-Riemannian geometry, taking the
measure $\mu$ to be the hypoelliptic heat kernel associated to this
geometry.  A relevant feature of stratified Lie groups is that, like
Euclidean space, they
admit a canonical group of dilations.  In this setting, the
Ornstein--Uhlenbeck operator $A$ fails to be holomorphic, so we
studied instead its $L^2(\mu)$-orthogonal projection $B$ onto the
holomorphic functions, which, we showed, coincides with the vector
field $E$ generating the dilations.  We were able to show that, if the
logarithmic Sobolev inequality \eqref{LSI-intro} holds, then complex
strong hypercontractivity \eqref{complex-sHC-intro} holds with the
operator $B$ in place of $A$.  Of course, in retrospect, this
statement is really \eqref{sHC-intro} for holomorphic functions.

We remark in passing that the logarithmic Sobolev inequality
\eqref{LSI-intro} is known to hold for a few stratified complex Lie groups
(specifically, the complex Heisenberg--Weyl groups), but it is not
currently known whether it holds for all of them.

The aim of the present paper is, in a sense, to unify
\cite{eldredge-gross-saloff-coste-dila} with \cite{gkl2010,gkl2015} by
considering statements akin to \eqref{sHC-intro} (in its ``partial''
form) and \eqref{sLSI-intro}, in the setting of a \emph{real}
stratified Lie group $G$, again equipped with a left-invariant
sub-Riemannian geometry and the associated hypoelliptic heat kernel
measure.  Our main Theorem \ref{main-combined} is, roughly,
that \eqref{sHC-intro} and \eqref{sLSI-intro} are equivalent in any
stratified Lie group $G$, and if $G$ satisfies the logarithmic Sobolev
inequality \eqref{LSI-intro} then \eqref{sHC-intro} and
\eqref{sLSI-intro} are both true.  Again, we stress that
\eqref{LSI-intro} is known to hold for some stratified Lie groups
(specifically, the H-type groups), but it is not currently known
whether it holds for all of them; see Remark \ref{open-lsi-remark}
below.

In our view, stratified Lie groups are a natural setting in which to
generalize \eqref{sHC-intro}, \eqref{sLSI-intro}, since the dilation
structure of a stratified Lie group is perhaps the most direct
generalization of the dilation structure of Euclidean space.  Rather
than considering more general measures $\mu$ as in
\cite{gkl2010,gkl2015}, we have chosen to restrict our attention to
the canonical hypoelliptic heat kernel measure: partly because it is
the natural generalization of Gaussian measure in this setting, and
partly because we need to make use of strong heat kernel estimates
from the literature (Theorem \ref{heat-kernel-upper-lower} below).

\subsection{Statement of results}

We briefly summarize the notation required to state our results.
Complete definitions are given in Sections \ref{sec:groups} and
\ref{sec:LSH} below.

Let $G$ be a stratified Lie group equipped with a left-invariant
sub-Riemannian metric $\langle \cdot, \cdot \rangle$, for which the
horizontal space is given by the first layer of the stratification of
the Lie algebra of $G$.  Let $m$ be some normalization of Haar
(Lebesgue) measure on $G$.  We denote by $\nabla$ and $\Delta$ the
canonical sub-gradient and sub-Laplacian induced by the metric, and by
$\rho_s$ the hypoelliptic heat kernel for $\Delta$ at time $s > 0$.  A
function $f \in C^2(G)$ is said to be \emph{log-subharmonic} (LSH) if
$f > 0$ and $\Delta \log f \ge 0$ (we discuss alternative formulations
in Section \ref{weak-lsh}).

We denote by $E$ the vector field which generates the canonical
dilations $\delta_r$ of the group $G$, and by $e^{-tE} f = f \circ
\delta_{e^{-t}}$ the corresponding operator semigroup.

The $L^p$ norms and spaces in the following statements are taken with
respect to the heat kernel probability measure $\rho_s \,dm$ at some
fixed time $s$.  We let $L^{p+} = \bigcup_{q > p} L^q$, and write $f
\in W^{1, p+}$ if $f, |\nabla f| \in L^{p+}$, where $|\cdot|$ is the
norm induced by the metric $\langle \cdot, \cdot \rangle$.

The aim of this paper is to study the relationship between the
following three statements, for fixed constants $c, \beta \ge 0$.  The
time parameter $s > 0$ may be taken as arbitrary; each of the
following statements holds for one $s >0$ iff it holds for all $s>0$,
with the same constants $c,\beta$ (see Remark \ref{scaling-remark}
below).
\begin{itemize}
  \item The \defword{classical logarithmic Sobolev inequality}: 
\begin{multline} \label{LSI}  \tag{LSI}
  \int_G f\log f\,\rho_s\,dm \le \frac{cs}{2} \int_G \frac{|\nabla
    f|^2}{f} \,\rho_s\,dm + \|f\|_{L^1} \log \|f\|_{L^1} + \beta
  \|f\|_{L^1}, \\
  f \in C^1(G), f \ge 0
\end{multline}
We have stated this in its ``$L^1$ form''.  By replacing $f$ by $f^2$,
one can see that \eqref{LSI} is equivalent to the ``$L^2$ form'':
\begin{multline}\label{LSI-L2} \tag{$L^2$-LSI}
  \int_G f^2\,\log |f|\,\rho_s\,dm \le cs \int_G |\nabla
  f|^2\,\rho_s\,dm + \|f\|_{L^2}^2 \log \|f\|_{L^2} + \frac{\beta}{2}
  \|f\|_{L^2}^2, \\
  f \in C^1(G)
\end{multline}
The original formulation \eqref{LSI-intro} of the logarithmic Sobolev inequality
corresponds to taking $\beta = 0$.  When $\beta > 0$, \eqref{LSI} is
sometimes referred to as a ``defective logarithmic Sobolev
inequality''.

As remarked in Section \ref{background-sec}, \eqref{LSI} is well known
to be equivalent to the hypercontractivity of the Ornstein--Uhlenbeck
semigroup of $G$.

\item The \defword{strong logarithmic Sobolev inequality}:
  \begin{multline}\label{sLSI} \tag{sLSI}
    \int_G f \log f\,\rho_s\,dm \le c \int_G Ef\,\rho_s\,dm +
    \|f\|_{L^1} \log \|f\|_{L^1} + \beta \|f\|_{L^1}, \\
    f \in LSH \cap W^{1,1+}
  \end{multline}

  The name ``strong logarithmic Sobolev inequality'' comes from
  \cite{gkl2010}. However, in our present context, we show in Theorem
  \ref{LSI-implies-sLSI} that \eqref{LSI} implies \eqref{sLSI}, so
  \eqref{sLSI} is in fact logically weaker.  Of course, \eqref{sLSI}
  applies to a much smaller class of functions.

\item \defword{(Partial) strong hypercontractivity}:
  \begin{multline}\label{sHC} \tag{sHC}
    \|e^{-tE} f\|_{L^q} \le M(p,q) 
    \|f\|_{L^p}, \qquad 
    t \ge t_J(p,q), \\ f \in LSH \cap L^q, \,
    0 < p \le q < \infty
  \end{multline}
  where
  \begin{equation}\label{M-tJ-def}
    M(p,q) := \exp(\beta \cdot (p^{-1} - q^{-1})), \qquad t_J(p,q) := c \log \left( \frac{q}{p} \right).
  \end{equation}
  Here $t_J(p,q)$ is Janson's time.  Note that the word
  ``contractivity'' is more apt when $\beta = 0$, since in that case,
  $M(p,q) = 1$ and \eqref{sHC} says that $e^{-tE}$ is a contraction
  from a subset of $L^p$ into $L^q$.  The word ``partial'' comes from
  \cite{gkl2015} and refers to the hypothesis $f \in LSH \cap L^q$ (rather than
  $L^p$); see Remark \ref{why-Lq} below.
\end{itemize}

In comparing these statements to \cite{gkl2010,gkl2015}, note that our
$c$ is their $\frac{c}{2}$.

The main results of this paper can be summarized as follows:

\begin{theorem}\label{main-combined}
  In any stratified Lie group $G$, the statements \eqref{sLSI} and
  \eqref{sHC} are equivalent.  If $G$ satisfies \eqref{LSI}, then
  \eqref{sLSI} and \eqref{sHC} are both satisfied.  That is,
  \begin{equation*}
  \text{\eqref{LSI}} \: \Longrightarrow \: \text{\eqref{sLSI}} \:
  \Longleftrightarrow \: \text{\eqref{sHC}}.
  \end{equation*}
\end{theorem}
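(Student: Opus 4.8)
The plan is to establish the two implications in Theorem~\ref{main-combined} separately, treating the equivalence \eqref{sLSI}$\Leftrightarrow$\eqref{sHC} as the technical heart of the paper and the implication \eqref{LSI}$\Rightarrow$\eqref{sLSI} as a more direct (though not entirely trivial) consequence of the structure of log-subharmonic functions. For \eqref{LSI}$\Rightarrow$\eqref{sLSI}: the point is that for $f \in LSH$ the sub-Dirichlet-form term $\int_G \frac{|\nabla f|^2}{f}\,\rho_s\,dm$ appearing in \eqref{LSI} should be comparable to $\int_G Ef\,\rho_s\,dm$ up to a factor absorbable into $cs$ versus $c$. The mechanism is integration by parts against the heat kernel: since $\rho_s$ solves the heat equation $\partial_s \rho_s = \Delta \rho_s$ and $E$ is the dilation generator, one expects an identity of the form $\int_G Ef\,\rho_s\,dm = (\text{const})\cdot s\int_G \Delta f\,\rho_s\,dm + \dots$, combined with the subharmonicity $\Delta \log f \ge 0$, which after the substitution $g = \log f$ gives $\Delta f = f(\Delta g + |\nabla g|^2) \ge f|\nabla g|^2 = \frac{|\nabla f|^2}{f}$. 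So one bounds $\int \frac{|\nabla f|^2}{f}\,\rho_s\,dm \le \int \Delta f\,\rho_s\,dm$ and then relates $\int \Delta f\,\rho_s\,dm$ to $\int Ef\,\rho_s\,dm$ via the heat-kernel scaling. This should be carried out first, as it is essentially self-contained given the heat kernel estimates cited as Theorem~\ref{heat-kernel-upper-lower}, which are needed to justify integrability and the integration by parts.

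For the equivalence \eqref{sLSI}$\Leftrightarrow$\eqref{sHC}, I would follow the Gross-style differentiation/integration scheme adapted to the dilation semigroup. To prove \eqref{sLSI}$\Rightarrow$\eqref{sHC}, fix $f \in LSH \cap L^q$ and define $\phi(t) = \log\|e^{-tE}f\|_{L^{p(t)}}$ where $p(t)$ is chosen so that $p(0) = p$ and $p(t) = q$ at $t = t_J(p,q)$; the natural choice making the computation close is $p(t) = p e^{t/c}$, so that $t_J$ is exactly the time at which $p(t)$ reaches $q$. Differentiating $\phi$ and using that $\frac{d}{dt}e^{-tE}f = -E(e^{-tE}f)$ together with the fact that $e^{-tE}$ preserves $LSH$ (dilations are group automorphisms and commute with $\Delta$ up to scaling, so $\Delta \log(f\circ\delta_r) \ge 0$), one reduces the inequality $\phi'(t) \le (\text{defect term})$ to precisely the statement \eqref{sLSI} applied to the function $(e^{-tE}f)^{p(t)}$ — which is again $LSH$ since positive powers of $LSH$ functions are $LSH$. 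Integrating $\phi'$ from $0$ to $t_J$ yields \eqref{sHC} with the constant $M(p,q) = \exp(\beta(p^{-1}-q^{-1}))$ coming out of integrating the $\beta$ defect term against $p(t)^{-2}\,dt$. Conversely, \eqref{sHC}$\Rightarrow$\eqref{sLSI} is obtained by taking $p = 1$, $q = e^{t/c}$ (equivalently $t = t_J(1,q)$), raising to appropriate powers, and differentiating the resulting inequality $\|e^{-tE}f\|_{L^{q(t)}} \le M(1,q(t))\|f\|_{L^1}$ at $t = 0$; the derivative of the left side produces exactly $\int f\log f\,\rho_s\,dm - \|f\|_{L^1}\log\|f\|_{L^1}$ minus the $c\int Ef\,\rho_s\,dm$ term, and the derivative of the right side produces the $\beta\|f\|_{L^1}$ term.

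The main obstacle I anticipate is analytic rather than algebraic: justifying the differentiation of $\phi(t)$ and the various integrations by parts rigorously, given that we are working with $L^p$ norms for $p$ possibly less than $1$ (where the "norm" is not a norm and convexity arguments must be replaced by direct computation), with functions that are only assumed $LSH \cap W^{1,1+}$ or $LSH \cap L^q$, and on a non-compact group where integrability at infinity is controlled only through the Gaussian-type heat kernel bounds of Theorem~\ref{heat-kernel-upper-lower}. In particular, one must check that $t \mapsto \|e^{-tE}f\|_{L^{p(t)}}$ is finite, continuous, and differentiable on $[0, t_J]$ — this is where the hypothesis $f \in L^q$ (the "partial" in partial strong hypercontractivity) is essential, as the $L^q$ bound at the endpoint must be propagated backward, and it is the subtlety flagged in Remark~\ref{why-Lq}. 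A secondary technical point is verifying that $e^{-tE}$ genuinely preserves the class $LSH$ and the relevant $L^p$-integrability along the flow; this follows from the dilation-covariance of $\Delta$ (namely $\Delta(f\circ\delta_r) = r^{2}(\Delta f)\circ\delta_r$) and the corresponding scaling of the heat kernel measure, but must be stated carefully. Once these regularity issues are handled, the algebraic identities driving both directions are the standard Gross computation, with $E$ playing the role that the Dirichlet form plays classically, and the fact that the relevant boundary/defect terms match up on both sides is essentially forced by the normalization of $M(p,q)$ and $t_J(p,q)$ in \eqref{M-tJ-def}.
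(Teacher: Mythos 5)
Your overall strategy coincides with the paper's: the implication \eqref{LSI} $\Rightarrow$ \eqref{sLSI} via $\Delta f \ge |\nabla f|^2/f$ for LSH functions together with the identity $\int_G Ef\,\rho_s\,dm = \tfrac{s}{2}\int_G \Delta f\,\rho_s\,dm$ (the paper's Lemma \ref{time-space}, obtained exactly as you suggest by differentiating the heat-kernel scaling relation); the implication \eqref{sHC} $\Rightarrow$ \eqref{sLSI} by differentiating $t \mapsto M(t)^{-1}\|e^{-tE}f\|_{L^{e^{t/c}}}$ at $t=0$; and \eqref{sLSI} $\Rightarrow$ \eqref{sHC} by the Gross-type monotonicity argument along $r(t)=e^{t/c}$, applying \eqref{sLSI} to $e^{-tE}f^{r(t)}$, which is again LSH. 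The constants match and the analytic obstructions you flag are the right ones.

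There is, however, one genuine missing idea. Integrating $\phi'$ from $0$ to $t_J$ proves \eqref{sHC} only at the critical time $t = t_J(p,q)$, whereas the statement requires it for all $t \ge t_J$. You cannot simply continue the monotonicity argument past $t_J$: that would require $f \in L^{r(t)}$ for $r(t) > q$, which is not assumed (this is precisely the integrability ceiling behind the ``partial'' hypothesis $f \in L^q$), and even formally it would yield the worse constant $M(p,r(t)) > M(p,q)$ when $\beta > 0$. The paper closes this gap with a separate lemma (Lemma \ref{contractive}): $\|e^{-\tau E} g\|_{L^1(\rho_s)} \le \|g\|_{L^1(\rho_s)}$ for LSH $g$, applied to $g = e^{-t_J E} f^q$ for the remaining time $\tau = t - t_J$; this contractivity again rests on $\int_G Eg\,\rho_s\,dm = \tfrac{s}{2}\int_G \Delta g\,\rho_s\,dm \ge 0$ for subharmonic $g$. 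Your sketch needs this (or an equivalent) additional step. Secondarily, you acknowledge but do not supply the approximation chain that removes the extra regularity: mollification $f \mapsto \varphi_n \ast f$ (which preserves LSH by a Jensen-inequality argument, the paper's Lemma \ref{convolution-LSH}, and restores gradient integrability), followed by the power trick $f^\alpha$, $\alpha \uparrow 1$, to pass from $L^{q+}$ to $L^q$, and finally $g = f^p$ to reduce general $p$ to $p=1$. These are where most of the actual work in the paper lies.
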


The implication \eqref{LSI} $\Longrightarrow$ \eqref{sLSI} is Theorem
\ref{LSI-implies-sLSI}; \eqref{sHC} $\Longrightarrow$ \eqref{sLSI} is
Theorem \ref{sHC-implies-sLSI}; and \eqref{sLSI} $\Longrightarrow$
\eqref{sHC} is Theorem \ref{sLSI-implies-sHC}.

\begin{remark}\label{open-lsi-remark}
It is an open problem to determine which stratified Lie
groups satisfy the logarithmic Sobolev inequality \eqref{LSI}, and we
hope this paper may provide additional motivation for further work on
this difficult question. The current state of the art, as far as we
are aware, is that \eqref{LSI} is true for H-type groups
(\cite{eldredge-gradient, li-gradient-h-type}; see Example
\ref{h-type-example} below for definitions and references), and of course in
the ``step 1'' Euclidean case (Example \ref{euclidean-ex}).  In all
other stratified Lie groups, including all those of step $\ge 3$, it
is apparently unknown whether \eqref{LSI} holds or not.
\end{remark}

\begin{corollary}\label{H-type-cor}
  If $G$ is an H-type group, then \eqref{sLSI} and \eqref{sHC} are
  both true.
\end{corollary}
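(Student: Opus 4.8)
The plan is to obtain Corollary \ref{H-type-cor} as an immediate consequence of Theorem \ref{main-combined}: that theorem already gives \eqref{LSI} $\Longrightarrow$ \eqref{sLSI} $\Longleftrightarrow$ \eqref{sHC} in every stratified Lie group, so it suffices to check that an H-type group is a stratified Lie group of the kind considered here, and that it satisfies the classical logarithmic Sobolev inequality \eqref{LSI}.

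First I would recall the structure of an H-type group (Example \ref{h-type-example}): it is a connected, simply connected nilpotent Lie group of step $2$ whose Lie algebra splits as $V_1 \oplus V_2$, and it is stratified with horizontal layer $V_1$; the left-invariant sub-Riemannian metric, the sub-Laplacian $\Delta$, and the hypoelliptic heat kernel $\rho_s$ appearing in \eqref{LSI}--\eqref{sHC} are then exactly those built from this stratification, so Theorem \ref{main-combined} applies to $G$ verbatim. Second, I would invoke the results of \cite{eldredge-gradient} and \cite{li-gradient-h-type}, which establish the pointwise gradient estimate for the heat semigroup on an H-type group and, by the standard semigroup argument, the logarithmic Sobolev inequality; I would record that this yields \eqref{LSI} with $\beta = 0$ and with an explicit constant $c = c(G)$ in the normalization used in this paper, so that the Janson time in the resulting \eqref{sHC} is $t_J(p,q) = c\log(q/p)$. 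Feeding \eqref{LSI} into Theorem \ref{main-combined} then delivers both \eqref{sLSI} and \eqref{sHC}.

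The only step needing any care — essentially the entire content of the corollary beyond citation — is translating the gradient and logarithmic Sobolev results of \cite{eldredge-gradient, li-gradient-h-type} into the precise form \eqref{LSI}: one must check that the sub-gradient $\nabla$ is the one attached to the first stratum, that the reference measure is $\rho_s\,dm$, and that the constant $c$ is tracked correctly. I do not expect a genuine obstacle here; the hard work, namely proving \eqref{LSI} on H-type groups in the first place, is carried out in the cited references, and once that input is recorded the corollary is an immediate specialization of Theorem \ref{main-combined}, reasonably presented in a few lines.
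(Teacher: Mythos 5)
Your proposal is correct and takes essentially the same route as the paper: the paper also deduces the corollary by citing \cite{eldredge-gradient, li-gradient-h-type} (together with the heat kernel estimates of \cite{eldredge-precise-estimates, li-heat-h-type} and the semigroup/$\Gamma_2$-type argument of \cite{bbbc-jfa}) to obtain \eqref{LSI} on H-type groups, and then applying Theorem \ref{main-combined}. The only caveat is that your parenthetical claims about $\beta=0$ and an explicit constant $c$ are not asserted by the paper and are not needed for the corollary.
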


\begin{remark}\label{why-Lq}
  In the statement \eqref{sHC}, the hypothesis $f \in L^q$ may seem
  somewhat unnatural, given that the result is to bound the $L^q$ norm
  by the $L^p$ norm.  It is reasonable to conjecture that if
  \eqref{sHC} holds for all $f \in LSH \cap L^q$ then in fact it holds
  for all $f \in LSH \cap L^p$.  However, the obvious density argument
  is not available, because we do not know in this setting whether
  $LSH \cap L^q(\rho_s)$ is dense in $LSH \cap L^p(\rho_s)$.  (Density
  arguments in stratified Lie groups can be subtle; for example, it is
  shown in \cite[Proposition 8]{lust-piquard-ornstein-uhlenbeck} that
  polynomials are dense in $L^2(\rho_s)$ only for groups of step $m
  \le 4$.)

  This issue arose, in the Euclidean setting, in the work of Graczyk,
  Kemp and Loeb \cite{gkl2015}; our \eqref{sHC} is the analogue of the
  statement ``partial strong hypercontractivity'' appearing in their
  Theorem 1.17.1(b).  The stronger statement, requiring only $f \in
  LSH \cap L^p$ (in our notation), is their Theorem 1.17.1(a); but
  they are able to show this only under significantly stronger
  assumptions on the measure, one of which is that the measure be
  $\alpha$-subhomogeneous for some $\alpha \le 1/c$, where $c$ is our
  constant in the strong logarithmic Sobolev inequality \eqref{sLSI}
  (recall that our $c$ is their $\frac{c}{2}$).  This approach does
  not appear to succeed in our stratified Lie group setting, given
  current technology.  For instance, if we consider the Heisenberg
  group $\mathbb{H}^3$, we can use the heat kernel estimates of
  \cite{li-heatkernel, eldredge-precise-estimates} to see that the
  heat kernel is $\alpha$-subhomogeneous only for $\alpha \ge 2$.
  However, current methods for proving \eqref{LSI} in this setting
  produce a constant $c > \frac{1}{2}$ (see \cite[Sections 1.2 and
    6.1]{bbbc-jfa} in conjunction with \cite[Theorem
    1.6]{driver-melcher}), although we do not know whether this is
  sharp.  The situation for H-type groups is similar
  \cite{eldredge-precise-estimates, li-heat-h-type, eldredge-gradient,
    li-gradient-h-type}.
\end{remark}

\section{Stratified Lie groups and hypoelliptic heat kernels}\label{sec:groups}

In this section, we review the standard definitions and properties of
stratified Lie groups (also known as Carnot groups), and of the
sub-Riemannian geometry and hypoelliptic heat kernels on these groups.
The material in this section is adapted from
\cite{eldredge-gross-saloff-coste-dila}.  Some motivating examples are
discussed in Section \ref{examples-sec}.

\subsection{Stratified Lie groups}
A
comprehensive reference on stratified Lie groups is \cite{blu-book}.

\begin{definition}
  Let $\mathfrak{g}$ be a finite-dimensional  real Lie algebra.  We
  say $\mathfrak{g}$ is \defword{stratified} of step $m$ if it admits a
  direct sum decomposition
  \begin{equation}\label{decomp}
    \mathfrak{g} = \bigoplus_{j=1}^m V_j
  \end{equation}
  and
  \begin{equation*}
    [V_1, V_j] = V_{j+1}, \qquad [V_1, V_m] = 0.
  \end{equation*}

  A finite-dimensional real Lie group $G$ is \defword{stratified} if
  it is connected and simply connected and its Lie algebra
  $\mathfrak{g}$ is stratified.
\end{definition}

Stratified Lie groups are also known as Carnot groups.  Various
equivalent definitions can be found in \cite[Chapters 1 and 2]{blu-book}.

As a trivial example, Euclidean space $\mathbb{R}^n$ with its usual
addition is a (commutative) stratified Lie group of step 1.  (Here the
Lie bracket is simply 0.)  The simplest nontrivial example of a
stratified Lie group is the Heisenberg group $\mathbb{H}_3$, which has
step 2.  See Section \ref{examples-sec} below for further
discussion of these and other examples.

It is easy to check that a stratified Lie group $G$ is necessarily
nilpotent, and thus diffeomorphic to its Lie algebra $\mathfrak{g}$
via the exponential map.  In particular, a stratified Lie group is
diffeomorphic to Euclidean space $\mathbb{R}^n$ as a smooth manifold
(though certainly not isomorphic to $\mathbb{R}^n$ as a Lie group).

\begin{notation}
Let $L_x, R_x : G \to G$ denote the
left and right translation maps $L_x(y)= x  y$ and $R_x(y) = yx$.
\end{notation}

\begin{notation}\label{tilde-hat-notation}
  Let $e$ denote the identity element of $G$.  We identify the Lie
  algebra $\mathfrak{g}$ with the tangent space $T_e G$. For $\xi \in
  \mathfrak{g}$, let $\widetilde{\xi}, \widehat{\xi}$ denote,
  respectively, the unique left-invariant and right-invariant vector
  fields on $G$ with $\widetilde{\xi}(e) = \widehat{\xi}(e) = \xi$.
\end{notation}

\begin{notation}
Being a connected nilpotent Lie group, $G$ is unimodular, so it has a
bi-invariant Haar measure which is unique up to scaling.  For our
purposes, there is no particular natural choice of scaling, so from
now on $m$ will denote some fixed Haar measure on $G$.  Integrals like
$\int_G f(x)\,dx$ will denote Lebesgue integrals with respect to $m$.
It is easy to verify that the Haar measure on $G$ is the push-forward
under the exponential map of Lebesgue measure on $\mathfrak{g}$.
\end{notation}

\begin{notation}
  \textbf{Convolution} on $G$ is defined by
\begin{equation}
  (f\ast g)(x)=\int_{G}f(xy^{-1})g(y)dy=\int_{G}f(z)g(z^{-1}x)dz
\end{equation}
when the integral exists.
\end{notation}

Suppose $\varphi \in C^1_c(G)$, $f \in C^1(G)$ and $\xi \in
\mathfrak{g}$.  By using the formulas $\widetilde{\xi} g(x) =
\frac{d}{dt}|_{t=0} \,g(x e^{t \xi})$ and $\widehat{\xi} g(x) =
\frac{d}{dt}|_{t=0} \,g(e^{t \xi} x)$ and differentiating under the
integral sign, we obtain the identities
\begin{equation}\label{convolution-diff}
  \widetilde{\xi}[\varphi \ast f] = \varphi \ast (\widetilde{\xi} f),
  \qquad \widehat{\xi}[\varphi \ast f] = (\widehat{\xi} \varphi) \ast f.
\end{equation}
\subsection{The dilation semigroup}\label{dilate-subsec}

\begin{definition}
  For $\lambda \ge 0$, the \defword{dilation map}
  $\delta_\lambda$ on
  $\mathfrak{g}$ is defined by
  \begin{equation}
    \delta_{\lambda}(v_{1}+\cdots+v_{m})=\sum_{j=1}^{m}\lambda^{j}v_{j}\qquad
    v_{j}\in V_{j}\quad j=1,\ldots,m.
  \end{equation}
  By an abuse of notation, we will also use $\delta_\lambda$ to denote
  the corresponding map on $G$ defined by $\delta_\lambda(\exp(v)) =
  \exp(\delta_\lambda(v))$.  
\end{definition}

It is straightforward to verify that for each $\lambda > 0$, the
dilation $\delta_\lambda$ on $\mathfrak{g}$ is an automorphism of the
Lie algebra, and the dilation $\delta_\lambda$ on $G$ is an
automorphism of the Lie group.  Also,
\begin{equation}
  \delta_{\lambda\mu}=\delta_{\lambda} \circ \delta_{\mu}\qquad\lambda,\mu
  \ge 0. 
\end{equation}
Moreover,  the derivative at the
identity of $\delta_\lambda : G \to G$ is $(\delta_\lambda)_* =
\delta_\lambda : \mathfrak{g} \to \mathfrak{g}$.

Since $\delta_\lambda$ is a group automorphism, we have the identity
\begin{equation*}
  \delta_\lambda \circ L_x = L_{\delta_\lambda(x)} \circ \delta_\lambda.
\end{equation*}
Hence if $\xi \in \mathfrak{g} = T_e G$ and $\widetilde{\xi}$ is the
corresponding left-invariant vector field, we have
\begin{equation*}
  (\delta_\lambda)_* \widetilde{\xi}(x) = (\delta_\lambda)_* (L_x)_*
  \xi = (\delta_\lambda L_x)_* \xi = (L_{\delta_\lambda(x)}
  \delta_\lambda)_* \xi = (L_{\delta_\lambda}(x))_* (\delta_\lambda)_* \xi.
\end{equation*}
In particular, if $\xi \in V_j$, then $(\delta_\lambda)_* \xi =
\lambda^j \xi$ and so
\begin{equation}\label{xi-dilation}
  (\delta_\lambda)_* \widetilde{\xi}(x) = \lambda^j
  (L_{\delta_\lambda(x)})_* \xi = \lambda^j \widetilde{\xi}(\delta_\lambda(x))
\end{equation}
or in other words
\begin{equation}\label{xi-f-dilation}
  \widetilde{\xi} (f \circ \delta_\lambda) = \lambda^j
  (\widetilde{\xi} f) \circ \delta_{\lambda}.
\end{equation}

The dilation structure is a fundamental property of stratified Lie
groups, and since the aim of this paper is to generalize results on
the dilation in Euclidean space, stratified Lie groups are a natural
setting to consider.  Indeed, in a certain sense, we are studying what
happens if we are allowed to dilate at different rates in different
directions (linearly in $V_1$ directions, quadratically in $V_2$
directions, and so on).  

\begin{definition}
  We define the \defword{dilation vector field} or \defword{Euler
    vector field} $E$ on $G$ by
  \begin{align}
    (Ef)(x) = \frac{d}{dr}\Big|_{r=0}f(\delta_{e^{r}}(x))\qquad f\in C^{1}(G).
  \end{align}
  The main object of study in this paper is the one-parameter semigroup of
  dilation operators
  \begin{equation}
    e^{-tE} f = f \circ \delta_{e^{-t}}, \qquad t \ge 0.
  \end{equation}
\end{definition}

\begin{notation}\label{adapted-coords}
Let $\xi_{j,k}$ be a basis for $\mathfrak{g} = G$ adapted to the
stratification $\{V_j\}$, so that $\{\xi_{j,k} : 1 \le k \le \dim
V_j\}$ is a basis for $V_j$.  Then each $x \in G$ can be written
uniquely as $x = \exp\left(\sum_{j=1}^m \sum_{k=1}^{\dim V_j} x_{j,k}
\xi_{j,k}\right)$, so that $x_{j,k}$ is a smooth system of coordinates
on $G$.
\end{notation}

In this system of coordinates, we have
\begin{equation}\label{E-coords}
  E = \sum_{j=1}^m \sum_{k=1}^{\dim V_j} j x_{j,k} \frac{\partial}{\partial x_{j,k}}.
\end{equation}
In the Euclidean case $m=1$, we have $E = \sum x_k
\frac{\partial}{\partial x_k} = x \cdot \nabla$, a vector field
pointing radially away from the origin with magnitude $|x|$.

\begin{notation}
  The \defword{homogeneous dimension} of $\mathfrak{g}$ or $G$ is
  \begin{equation*}
    D = \sum_{j=1}^m j \dim V_j.
  \end{equation*}
\end{notation}

We note that $\delta_{\lambda}$ scales the Lebesgue measure $m$ by
\begin{equation}\label{m-dilate}
m(\delta_\lambda(A)) = \lambda^{D} m(A).
\end{equation} 
Thus for an integrable function $f$, we have
\begin{equation}\label{m-dilate-integrate}
  \int_G f \circ \delta_\lambda\,dm = \lambda^{-D} \int_G f\,dm.
\end{equation}

To conclude this subsection, we observe that the vector fields discussed
above can be expressed in terms of each other in a well-behaved
manner.

Define the adapted basis $\{\xi_{j,k}\}$ for $\mathfrak{g}$ and the
coordinates $\{x_{j,k}\}$ on $G$ as in Notation
\ref{adapted-coords}.  For each $j,k$, the vector fields
$\pp{x_{j,k}}, \widetilde{\xi_{j,k}}, \widehat{\xi_{j,k}}$ (see
Notation \ref{tilde-hat-notation}) coincide at the identity but in
general nowhere else.

\begin{lemma}\label{poly-vector-fields}
  We can write
  \begin{align}
    \widetilde{\xi_{j,k}} &= \pp{x_{j,k}} + \sum_{\alpha=j+1}^m
    \sum_{\beta=1}^{\dim V_\alpha} a_{j,k}^{\alpha, \beta}
    \pp{x_{\alpha, \beta}} \label{xi-in-x}
    \intertext{and}
    \pp{x_{j,k}} &= \widetilde{\xi_{j,k}} + \sum_{\alpha=j+1}^m
    \sum_{\beta=1}^{\dim V_\alpha} b_{j,k}^{\alpha, \beta}
    \widetilde{\xi_{j,k}} \label{x-in-xi}
  \end{align}
  where the coefficient functions $a_{j,k}^{\alpha, \beta}$,
  $b_{j,k}^{\alpha, \beta}$ are polynomials in the coordinates
  $x_{\alpha, \beta}$.

  We can likewise express $\{\widehat{\xi_{j,k}}\}$ and $\{\pp{x_{j,k}}\}$
  in terms of each other, with polynomial coefficients, as well as
  $\{\widetilde{\xi_{j,k}}\}$ and  $\{\widehat{\xi_{j,k}}\}$.
\end{lemma}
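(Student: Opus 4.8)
The plan is to combine two standard facts about $G$: that in the adapted coordinates $x_{j,k}$ of Notation \ref{adapted-coords} the group operations are polynomial, and that the whole picture is homogeneous under the dilations $\delta_\lambda$. First I would recall (see \cite{blu-book}, or apply the Baker--Campbell--Hausdorff formula, which terminates since $\mathfrak{g}$ is nilpotent) that in these coordinates the multiplication map $(x,y)\mapsto xy$ and the inversion map are given by polynomials in the coordinate functions. Writing $\widetilde{\xi_{j,k}}(x) = (L_x)_*\xi_{j,k} = \frac{d}{dt}\big|_{t=0}\,x\exp(t\xi_{j,k})$, its components in the frame $\{\pp{x_{\alpha,\beta}}\}$ are derivatives at $t=0$ of polynomials in $x$ and $t$, hence polynomials in $x$; so $\widetilde{\xi_{j,k}} = \sum_{\alpha,\beta} p^{\alpha,\beta}_{j,k}\,\pp{x_{\alpha,\beta}}$ with each $p^{\alpha,\beta}_{j,k}$ a polynomial, and the same applies to $\widehat{\xi_{j,k}}$.

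Next I would pin down the degrees using homogeneity. Since the $x_{\alpha,\beta}$ are adapted to the stratification, $x_{\alpha,\beta}\circ\delta_\lambda = \lambda^\alpha x_{\alpha,\beta}$, whence $\pp{x_{\alpha,\beta}}(f\circ\delta_\lambda) = \lambda^\alpha\big(\pp{x_{\alpha,\beta}}f\big)\circ\delta_\lambda$. Feeding this, together with \eqref{xi-f-dilation}, into the expansion of $\widetilde{\xi_{j,k}}(f\circ\delta_\lambda)$ and comparing the coefficients of $\pp{x_{\alpha,\beta}}f$ forces $p^{\alpha,\beta}_{j,k}\circ\delta_\lambda = \lambda^{\alpha-j}\,p^{\alpha,\beta}_{j,k}$, i.e.\ $p^{\alpha,\beta}_{j,k}$ is $\delta$-homogeneous of degree $\alpha-j$. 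A nonzero polynomial is $\delta$-homogeneous only of nonnegative degree, and of degree $0$ only if it is constant, so $p^{\alpha,\beta}_{j,k}=0$ for $\alpha<j$ and $p^{j,\beta}_{j,k}$ is a constant; evaluating at $e$, where $\pp{x_{\alpha,\beta}}$ and $\widetilde{\xi_{\alpha,\beta}}$ coincide and the higher-degree terms vanish, identifies that constant as the Kronecker symbol $\delta_{\beta k}$. This is \eqref{xi-in-x}, with $a^{\alpha,\beta}_{j,k} := p^{\alpha,\beta}_{j,k}$ for $\alpha>j$. Because $\delta_\lambda$ is a group automorphism, $\delta_\lambda\circ R_x = R_{\delta_\lambda x}\circ\delta_\lambda$, which yields the exact right-invariant analogue of \eqref{xi-f-dilation}, so the identical argument handles $\widehat{\xi_{j,k}}$.

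Finally, \eqref{x-in-xi} follows by inverting \eqref{xi-in-x}. Ordering the index pairs $(j,k)$ by layer, the matrix $A(x)$ with $\widetilde{\xi_{j,k}} = \sum A_{(j,k),(\alpha,\beta)}(x)\,\pp{x_{\alpha,\beta}}$ has the form $I + N(x)$, where $N(x)$ is supported on entries with $\alpha>j$ and has polynomial coefficients; since there are only $m$ layers, $N(x)^m \equiv 0$, so $A(x)^{-1} = \sum_{i=0}^{m-1}(-N(x))^i$ is again polynomial and of the same layer-triangular type, giving \eqref{x-in-xi} (and likewise for $\widehat{\xi_{j,k}}$). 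Composing the two resulting unitriangular polynomial changes of frame through $\{\pp{x_{j,k}}\}$ then expresses $\{\widetilde{\xi_{j,k}}\}$ and $\{\widehat{\xi_{j,k}}\}$ in terms of each other with polynomial coefficients. The only steps requiring care are the homogeneity bookkeeping in the middle paragraph — in particular, it is the \emph{polynomiality} of the coefficients, not merely their smoothness, that lets the nonnegative-degree constraint do its work — and the nilpotency of $N$, which keeps the inversion inside the polynomial ring; neither is a serious obstacle.
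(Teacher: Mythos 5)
Your proof is correct, and its overall skeleton matches the paper's: both use the Baker--Campbell--Hausdorff formula to get polynomial coefficients, both invert a unitriangular polynomial system to pass from \eqref{xi-in-x} to \eqref{x-in-xi} (your Neumann series $\sum_{i=0}^{m-1}(-N)^i$ for the nilpotent $N$ is the same computation as the paper's downward induction on $j$), and both relate $\{\widetilde{\xi_{j,k}}\}$ to $\{\widehat{\xi_{j,k}}\}$ by composing through the coordinate frame. Where you genuinely diverge is in how you obtain the triangular structure of \eqref{xi-in-x}. The paper reads it off directly from the refined form of BCH in adapted coordinates, namely $(xy)_{j,k}=x_{j,k}+y_{j,k}+R_{j,k}(x,y)$ with $R_{j,k}$ depending only on coordinates from layers $\alpha<j$ (citing \cite[Proposition 2.2.22 (4)]{blu-book}); differentiating $x\exp(t\xi_{j,k})$ at $t=0$ then immediately yields the identity term plus contributions only in layers $\alpha>j$. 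You instead need only the cruder fact that the group law is polynomial, and recover the triangularity and the leading Kronecker delta from $\delta$-homogeneity: the coefficient of $\pp{x_{\alpha,\beta}}$ in $\widetilde{\xi_{j,k}}$ is a polynomial homogeneous of degree $\alpha-j$ under $\delta_\lambda$, hence zero for $\alpha<j$ and constant for $\alpha=j$, with the constant fixed by evaluating at $e$. Your route is slightly longer but more self-contained (it uses only \eqref{xi-f-dilation} and its right-invariant analogue, both of which follow from $\delta_\lambda$ being an automorphism, rather than the precise dependence structure of the BCH polynomials), and the homogeneity bookkeeping is a reusable device in this setting; the paper's route is shorter at the cost of leaning on the sharper cited form of BCH. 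Both are valid proofs of the lemma.
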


\begin{proof}
  By the Baker--Campbell--Hausdorff formula, in the coordinates
  $\{x_{j,k}\}$, the group operation on $G$ has the form
  \begin{equation*}
    (x y)_{j,k} = x_{j,k} + y_{j,k} + R_{j,k}(x,y)
  \end{equation*}
  where $R_{j,k}$ is a polynomial which only depends on the
  coordinates $x_{\alpha,\beta}, y_{\alpha,\beta}$ with $\alpha < j$.
  See \cite[Proposition 2.2.22 (4)]{blu-book} for details.  Then \eqref{xi-in-x}
  follows immediately, since $\widetilde{\xi_{j,k}}(x)  = \frac{d}{dt}
  \big|_{t=0} x \exp(t \xi_{j,k})$.  We then obtain \eqref{x-in-xi} by
  solving the system \eqref{xi-in-x} for $\pp{x_{j,k}}$.  (Note, for
  instance, that from \eqref{xi-in-x} we have $\widetilde{\xi_{m,k}} =
  \pp{x_{m,k}}$, so that \eqref{x-in-xi} is trivially satisfied when
  $j=m$.  One can then proceed by downward induction on $j$.)

  An identical argument applies to $\{\widehat{\xi_{j,k}}\}$ and
  $\{\pp{x_{j,k}}\}$.  To write $\widetilde{\xi_{j,k}}$ in terms of
  $\{\widehat{\xi_{j,k}}\}$, first write $\{\widehat{\xi_{j,k}}\}$ in
  terms of $\{\pp{x_{j,k}}\}$ as just noted, and then substitute this
  into \eqref{xi-in-x}.
\end{proof}

\begin{corollary}[See also \mbox{\cite[Lemma
        4]{lust-piquard-ornstein-uhlenbeck}}] \label{E-in-xi-cor}
  We can write
  \begin{equation}\label{E-in-xi}
    E = \sum_{j,k} \tilde{c}_{j,k} \widetilde{\xi_{j,k}} = \sum_{j,k}
    \hat{c}_{j,k} \widehat{\xi_{j,k}}
  \end{equation}
  where the coefficient functions $\tilde{c}_{j,k}, \hat{c}_{j,k}$ are
  polynomials in the $x_{\alpha,\beta}$ coordinates.
\end{corollary}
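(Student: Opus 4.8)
The plan is to combine the explicit coordinate expression \eqref{E-coords} for the Euler vector field with the polynomial change of basis from Lemma \ref{poly-vector-fields}. Starting from
\[
  E = \sum_{j,k} j\, x_{j,k}\, \pp{x_{j,k}},
\]
I would substitute, for each pair $(j,k)$, the expansion \eqref{x-in-xi},
\[
  \pp{x_{j,k}} = \widetilde{\xi_{j,k}} + \sum_{\alpha=j+1}^m \sum_{\beta=1}^{\dim V_\alpha} b_{j,k}^{\alpha,\beta}\, \widetilde{\xi_{\alpha,\beta}},
\]
and then collect terms according to which left-invariant field $\widetilde{\xi_{j,k}}$ they multiply. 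This rewrites $E$ as $\sum_{j,k} \tilde{c}_{j,k}\, \widetilde{\xi_{j,k}}$, where $\tilde{c}_{j,k}$ is the single term $j\,x_{j,k}$ (coming from the diagonal $\widetilde{\xi_{j,k}}$ in the expansion of $\pp{x_{j,k}}$) plus a finite sum of terms of the form $(j'\, x_{j',k'})\, b_{j',k'}^{j,k}$ with $j' < j$. Since each $j'\,x_{j',k'}$ is a degree-one polynomial, each $b^{\alpha,\beta}_{j,k}$ is a polynomial in the coordinates by Lemma \ref{poly-vector-fields}, and the polynomial ring is closed under sums and products, each $\tilde{c}_{j,k}$ is again a polynomial, as claimed.

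For the right-invariant version, I would repeat the identical substitution using the analogue of \eqref{x-in-xi} that expresses $\pp{x_{j,k}}$ in terms of $\{\widehat{\xi_{j,k}}\}$ with polynomial coefficients, which Lemma \ref{poly-vector-fields} explicitly guarantees. (Equivalently, one could take the expansion $E = \sum_{j,k}\tilde c_{j,k}\widetilde{\xi_{j,k}}$ already obtained and further rewrite each $\widetilde{\xi_{j,k}}$ in terms of the $\widehat{\xi_{j,k}}$ using the polynomial change of basis between left- and right-invariant fields from the same lemma; polynomiality is again preserved.)

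I do not expect any genuine obstacle: all the real content is packaged in Lemma \ref{poly-vector-fields}, and what remains is a one-line substitution plus the triviality that sums and products of polynomials are polynomials. The only point needing a moment of care is the index bookkeeping — keeping the sums finite and not double-counting the diagonal contribution $j\,x_{j,k}\,\widetilde{\xi_{j,k}}$ — but this is immediate from the fact that the sum in \eqref{x-in-xi} runs only over $\alpha \ge j+1$.
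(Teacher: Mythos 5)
Your proposal is correct and matches the paper's proof, which is exactly the one-line substitution of \eqref{x-in-xi} into \eqref{E-coords} (with the right-invariant case handled by the analogous expansion from Lemma \ref{poly-vector-fields}). Your more detailed bookkeeping, including the corrected index $\widetilde{\xi_{\alpha,\beta}}$ in the expansion of $\pp{x_{j,k}}$, is just an expanded version of the same argument.
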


\begin{proof}
  Substitute \eqref{x-in-xi} into \eqref{E-coords}.
\end{proof}

\subsection{Sub-Riemannian geometry on $G$}\label{sub-riemannian-section}

In this subsection, we review some facts about the sub-Riemannian
geometry of a stratified Lie group $G$ and its hypoelliptic
sub-Laplacian.  For background on the general notions of sub-Riemannian
geometry, see \cite{montgomery,rifford,strichartz,strichartz-corrections}.

Fix an inner product $\langle \cdot, \cdot \rangle$ on $V_1 \subset
\mathfrak{g}$.  From now on, when we speak of a stratified Lie group
$G$, we really mean a triple $(G, \langle \cdot, \cdot \rangle, m)$,
including a choice of inner product on $V_1$ and a choice of
normalization for the Haar measure.  Objects such as the
sub-Laplacian, heat kernel, etc, which we discuss below, are not
really intrinsic to the Lie group $G$, but depend on the choice of
$\langle \cdot, \cdot \rangle$ and $m$.

The inner product gives rise to a \defword{sub-Riemannian geometry} on
the smooth manifold $G$ in the following way.  For $x \in G$, let $H_x
= (L_x)_* V_1 \subset T_x G$, so that $H$ is a left-invariant
subbundle of the tangent bundle $TG$.  This $H$ is called the
\defword{horizontal bundle} or \defword{horizontal distribution}.
Then the inner product $\langle \cdot, \cdot \rangle$ on $V_1$
induces, by left translation, an inner product $\langle \cdot, \cdot
\rangle_x$ on $H_x$, which is a left-invariant \defword{sub-Riemannian
  metric} on $G$.  We may drop the subscript $x$ when no confusion
will arise.  Since $V_1$ generates $\mathfrak{g}$, the horizontal bundle
$H$ satisfies H\"ormander's bracket generating condition.  We will use
$|\cdot|$ to denote the norm on $H_x$ induced by $\langle \cdot, \cdot
\rangle$.

This metric gives rise to a canonical left-invariant \defword{sub-Laplacian} $\Delta$ on $G$,
which is easiest to define in terms of a basis.  Let $\xi_1, \dots,
\xi_n$ be an orthonormal basis for $V_1$, and let
\begin{equation}\label{Delta-def}
  \Delta = \widetilde{\xi_1}^2 + \dots + \widetilde{\xi_n}^2
\end{equation}
where, as in Notation \ref{tilde-hat-notation}, $\widetilde{\xi_i}$ is
the extension of $\xi_i$ to a left-invariant vector field on $G$.  It
is easy to check this definition is independent of the basis chosen.
Since $H$ satisfies the bracket generating condition, the operator
$\Delta$ is hypoelliptic \cite{hormander67}.  It is shown in
\cite{purplebook} that $\Delta$, with domain $C^\infty_c(G)$, is
essentially self-adjoint on $L^2(G, m)$.

As a consequence of
\eqref{xi-f-dilation}, we have
\begin{equation}\label{Delta-dilate}
  \Delta [f \circ \delta_\lambda] = \lambda^2 (\Delta f) \circ \delta_\lambda.
\end{equation}
Likewise, if $e^{s \Delta/4}$ is the heat semigroup for $\Delta$, we
have
\begin{equation}\label{semigroup-dilate}
  e^{s \Delta/4} [f \circ \delta_\lambda] = (e^{s \lambda^2
    \Delta/4} f) \circ \delta_\lambda.
\end{equation}

Much more information
about the sub-Laplacian can be found in \cite{blu-book}.

We may also define the \defword{sub-gradient} $\nabla$ by
\begin{equation}
  \nabla f(x) = \sum_{i=1}^n (\widetilde{\xi_i} f)(x) \widetilde{\xi_i}(x) \in H_x, \qquad f \in C^1(G).
\end{equation}
This too is well-defined independent of the chosen basis.

Finally, let $d$ be the Carnot--Carath\'eodory distance induced by the
sub-Riemannian metric (see \cite[Section 5.2]{blu-book}).
Intuitively, $d(x,y)$ is the length of the shortest horizontal path
joining $x$ and $y$.  The Chow--Rashevskii and ball-box theorems
\cite{montgomery,nagel-stein-wainger} imply that $d(x,y)$ is
finite and that $d$ is a metric which induces the manifold topology on
$G$ (which in turn is just the Euclidean topology on the
finite-dimensional vector space $G = \mathfrak{g}$).  A
straightforward computation shows that $d$ is left-invariant with
respect to the group structure on $G$:
\begin{equation}\label{d-left}
  d(x,y) = d(zx,zy), \qquad x,y,z \in G
\end{equation}
and invariant with respect to the inverse:
\begin{equation}\label{d-inverse}
  d(e, x^{-1}) = d(e,x)
\end{equation}
and also homogeneous with respect to the dilation $\delta_{\lambda}$:
\begin{equation}\label{d-dilate}
  d(\delta_{\lambda}(x), \delta_{\lambda}(y)) = \lambda d(x,y).
\end{equation}
See \cite[Propositions 5.2.4 and 5.2.6]{blu-book} for details.

\subsection{Properties of the heat kernel}\label{subsec:heatkernel}

It is shown in \cite{purplebook} that the Markovian heat semigroup
$e^{s \Delta/4}$ admits a right convolution kernel $\rho_s$,
i.e. $e^{s \Delta/4} f = f \ast \rho_s$; it is also shown that
$\rho_s$ is $C^\infty$ and strictly positive.  This function $\rho_s$
is the \defword{(hypoelliptic) heat kernel} associated to $(G, \langle
\cdot, \cdot \rangle, m)$.  Since $e^{s \Delta/4}$ is Markovian, the
heat kernel measure $\rho_s\,dm$ is a probability measure.  In this
subsection, we collect several properties of the heat kernel from the
literature.

\begin{notation}\label{Lp+}
  For $s > 0$ and $0 < p \le \infty$, we write $L^p(\rho_s)$ as short
  for $L^p(G, \rho_s\, dm)$.  Let
  \begin{align*}
    L^{p+}(\rho_s) &:= \bigcup_{q > p} L^q(\rho_s) \\
    L^{p-}(\rho_s) &:= \bigcap_{q < p} L^q(\rho_s). 
  \end{align*}
  We will say $f_n \to f$ in $L^{p+}(\rho_s)$ (respectively, in
  $L^{p-}(\rho_s)$) if $f_n \to f$ in $L^q(\rho_s)$ for some $q > p$
  (respectively, for all $q < p$).  Also, $W^{1, p+}(\rho_s)$ will
  denote the space of functions $f$ with $f, |\nabla f| \in
  L^{p+}(\rho_s)$.  We shall only have occasion to deal with $C^1$
  functions in $W^{1,p+}$, so we do not discuss weak derivatives here.
\end{notation}

Notice that $L^{p+}(\rho_s)$ and $L^{p-}(\rho_s)$ are vector spaces,
and $L^{\infty-}(\rho_s)$ is an algebra.  By H\"older's inequality, if
$f \in L^{p+}(\rho_s)$ and $g \in L^{\infty -}(\rho_s)$ then $fg \in
L^{p+}(\rho_s)$.  We also note that if $f \in L^{1}(\rho_s)$ is
positive and bounded away from zero then $\log f \in L^{\infty
  -}(\rho_s)$.

\begin{lemma}\label{heat-kernel-inverse}
  The heat kernel $\rho_s$ is invariant under the group inverse
  operation: we have
  $\rho_s(x) = \rho_s(x^{-1})$.
\end{lemma}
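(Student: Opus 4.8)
The plan is to exploit the fact that the sub-Laplacian $\Delta$ is built symmetrically from a left-invariant orthonormal frame, and to compare the left-convolution heat semigroup with its right-convolution counterpart via the group inverse map. Let $\iota : G \to G$ denote $\iota(x) = x^{-1}$. The key algebraic observation is that $\iota$ carries left-invariant vector fields to right-invariant ones: for $\xi \in \mathfrak{g}$ one has $\iota_* \widetilde{\xi} = -\widehat{\xi}$ (this is the standard computation $\frac{d}{dt}\big|_{t=0}(x e^{t\xi})^{-1} = \frac{d}{dt}\big|_{t=0} e^{-t\xi} x^{-1}$). Consequently, writing $\widehat{\Delta} = \widehat{\xi_1}^2 + \dots + \widehat{\xi_n}^2$ for the right-invariant sub-Laplacian built from the same orthonormal basis of $V_1$, we get the intertwining relation $\Delta(g \circ \iota) = (\widehat{\Delta} g) \circ \iota$ for $g \in C^2(G)$, since the signs cancel in the second-order terms.

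First I would use this to show that $\widetilde{\rho}_s := \rho_s \circ \iota$ is the \emph{left} convolution kernel of the heat semigroup, i.e. $e^{s\Delta/4} f = \widetilde{\rho}_s \ast f$. One way: since $G$ is unimodular, a change of variables $y \mapsto y^{-1}$ in the convolution integral shows $(f \ast \rho_s)(x) = \int_G f(xy^{-1})\rho_s(y)\,dy = \int_G f(xy)\rho_s(y^{-1})\,dy$. Now $\int_G f(xy) h(y)\,dy = \int_G f(z) h(x^{-1}z)\,dz = (h \circ \iota \circ L_{x^{-1}})$-type expression; more cleanly, setting $\check f(x) := f(x^{-1})$ and using unimodularity, one checks $\widecheck{f \ast g} = \check g \ast \check f$. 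Applying this with $g = \rho_s$ and using that $e^{s\Delta/4}$ is self-adjoint on $L^2(m)$ (hence $\langle f \ast \rho_s, h\rangle = \langle f, h \ast \rho_s\rangle$, forcing a symmetry of the kernel under $(x,y) \mapsto (y,x)$ of the integral kernel, which for a convolution operator translates to $\rho_s = \check\rho_s$) — this last route is probably the cleanest: the heat semigroup is self-adjoint on $L^2(G,m)$, its integral kernel is $k_s(x,y) = \rho_s(y^{-1}x)$, self-adjointness gives $\rho_s(y^{-1}x) = \overline{\rho_s(x^{-1}y)} = \rho_s(x^{-1}y)$ (real-valued), and setting $x = e$ yields $\rho_s(y^{-1}) = \rho_s(y)$.

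The main obstacle is bookkeeping rather than conceptual: one must be careful that $\rho_s$ is defined as a \emph{right} convolution kernel ($e^{s\Delta/4}f = f \ast \rho_s$, per the excerpt), so the integral kernel of the operator with respect to $dm$ is $k_s(x,y) = \rho_s(y^{-1}x)$, and one needs unimodularity to know $dm$ is both left- and right-invariant so that this kernel representation and its adjoint behave as expected. Given that, the self-adjointness of $e^{s\Delta/4}$ on $L^2(G,m)$ (quoted from \cite{purplebook} via essential self-adjointness of $\Delta$ on $C^\infty_c(G)$) immediately gives $k_s(x,y) = k_s(y,x)$, i.e. $\rho_s(y^{-1}x) = \rho_s(x^{-1}y)$ for all $x,y$, and specializing $x = e$ gives the claim. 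I would present the self-adjointness argument as the main line, with the vector-field computation $\iota_*\widetilde{\xi} = -\widehat{\xi}$ mentioned as the conceptual reason (or as an alternative proof via uniqueness of heat kernels, noting $\check\rho_s$ solves the same heat equation with the same initial data $\delta_e$ since $\iota(e) = e$).
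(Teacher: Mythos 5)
Your argument is correct, but note that the paper does not actually prove this lemma at all: it simply cites \cite[Theorem III.2.1 (4)]{robinson-book} and \cite[Proposition 3.1 (3)]{driver-gross-hilbert-spaces-holomorphic}. Your main line --- the operator $e^{s\Delta/4}$ is self-adjoint on $L^2(G,m)$ because $\Delta$ is essentially self-adjoint on $C^\infty_c(G)$, its integral kernel with respect to the bi-invariant Haar measure is $k_s(x,y)=\rho_s(y^{-1}x)$, so self-adjointness forces $\rho_s(y^{-1}x)=\rho_s(x^{-1}y)$, and $x=e$ gives the claim --- is essentially the standard proof found in those references, so you have supplied a correct self-contained argument where the paper defers to the literature. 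Two small points deserve a sentence each in a written-up version: (i) self-adjointness a priori gives the kernel symmetry only for $m\times m$-almost every $(x,y)$, and you should invoke the smoothness (or at least continuity) of $\rho_s$, which the paper records in Section \ref{subsec:heatkernel}, to upgrade this to an everywhere identity before setting $x=e$; (ii) you are implicitly identifying the convolution semigroup $f\mapsto f\ast\rho_s$ constructed in \cite{purplebook} with the spectral-theoretic semigroup of the self-adjoint closure of $\Delta$ --- this is true and is part of what \cite{purplebook} establishes, but it is the hinge of the argument and should be stated. Your auxiliary observations ($\iota_*\widetilde{\xi}=-\widehat{\xi}$, the identity $\widecheck{f\ast g}=\check g\ast\check f$ on a unimodular group, and the alternative route via uniqueness of solutions to the heat equation with initial data $\delta_e$) are all correct and would serve equally well, though they are not needed once the self-adjointness argument is in place.
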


\begin{proof}
See  \cite[Theorem III.2.1 (4)]{robinson-book} or the
discussion in \cite[Proposition 3.1
  (3)]{driver-gross-hilbert-spaces-holomorphic}.
\end{proof}

We shall need to make use of sharp upper and lower estimates for the
heat kernel.
\begin{theorem}
\label{heat-kernel-upper-lower}
  For each $0 < \epsilon < 1$ there are constants $C(\epsilon),C'(\epsilon)$ such that
  for every $x \in G$ and $s > 0$,
  \begin{equation}
    \frac{C(\epsilon)}{m(B(e, \sqrt{s}))} e^{-d(e,x)^2/(1-\epsilon)s} \le \rho_s(x) \le \frac{C'(\epsilon)}{m(B(e, \sqrt{s}))} e^{-d(e,x)^2/(1+\epsilon)s}
  \end{equation}
  where $m(B(e,\sqrt{s}))$ is the Lebesgue (Haar) measure of the
  $d$-ball centered at the identity (or any other point of $G$) with
  radius $\sqrt{s}$.
\end{theorem}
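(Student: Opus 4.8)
This is the now-classical two-sided Gaussian estimate for the heat kernel of a sub-Laplacian on a group of polynomial volume growth, and the plan is to assemble it from three standard ingredients; alternatively one may simply cite \cite{purplebook}. The first step uses the dilation structure to reduce to a single value of $s$. From \eqref{semigroup-dilate} and \eqref{m-dilate-integrate} one obtains the scaling identity $\rho_s(x) = s^{-D/2}\rho_1(\delta_{1/\sqrt{s}}(x))$, while \eqref{m-dilate} and \eqref{d-dilate} give $m(B(e,\sqrt{s})) = s^{D/2}m(B(e,1))$ and $d(e,x)^2/s = d(e,\delta_{1/\sqrt{s}}(x))^2$. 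Writing $y = \delta_{1/\sqrt{s}}(x)$, which ranges over all of $G$ as $x$ does, the asserted inequalities for general $s$ become equivalent to the single pair of inequalities $\frac{C(\epsilon)}{m(B(e,1))}e^{-d(e,y)^2/(1-\epsilon)} \le \rho_1(y) \le \frac{C'(\epsilon)}{m(B(e,1))}e^{-d(e,y)^2/(1+\epsilon)}$ for all $y \in G$. Since $\rho_1$ is smooth and strictly positive, these hold trivially on any fixed ball, so the entire content lies in the behavior as $d(e,y) \to \infty$.

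For the upper bound, $G$ has polynomial volume growth of homogeneous degree $D$ --- indeed $m(B(x,r)) = m(B(e,1))r^D$ exactly, by \eqref{d-left}, \eqref{d-dilate} and \eqref{m-dilate} --- so $G$ supports the $D$-dimensional Sobolev and Nash inequalities (Folland--Stein; see \cite{blu-book,purplebook}), which yield ultracontractivity of the heat semigroup, $\|\rho_t\|_\infty \le C_0 t^{-D/2}$. To promote this to Gaussian off-diagonal decay I would run the Davies perturbation method: for a bounded Lipschitz weight $\psi$ with $|\nabla\psi| \le \alpha$, integrating by parts against $\partial_t u = \tfrac14\Delta u$ yields $\int (e^{t\Delta/4}f)^2 e^{2\psi}\,dm \le e^{\alpha^2 t/2}\int f^2 e^{2\psi}\,dm$; combining this with the on-diagonal bound, taking $\psi$ to be a bounded truncation of $\alpha\, d(e,\cdot)$ --- here one uses the standard fact that the Carnot--Carath\'eodory distance is horizontally $1$-Lipschitz, i.e.\ $|\nabla d(e,\cdot)| \le 1$ a.e.\ --- and optimizing in $\alpha$ produces a bound $\rho_t(x) \le Ct^{-D/2}e^{-d(e,x)^2/(Kt)}$. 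Reducing the constant $K$ in the exponent to $1+\epsilon$ then follows either from a careful choice of the weight (a parabolic weight in the style of Grigor'yan's integrated maximum principle) or from the self-improvement of Gaussian upper bounds following Coulhon--Sikora.

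For the lower bound, Carnot groups enjoy the volume doubling property together with a scale-invariant $L^2$ Poincar\'e inequality on Carnot--Carath\'eodory balls (due to Jerison; see also \cite{nagel-stein-wainger}); by the Grigor'yan--Saloff-Coste characterization these two properties are together equivalent to the parabolic Harnack inequality for $\partial_t - \tfrac14\Delta$, and hence --- as part of the same equivalence --- to the full two-sided Gaussian estimate. Concretely, one combines the on-diagonal lower bound $\rho_t(e) \ge ct^{-D/2}$ (again a consequence of the Nash / Faber--Krahn inequality on $G$) with a chain of applications of the Harnack inequality along a $d$-geodesic from $e$ to $x$: covering such a geodesic by $\asymp d(e,x)^2/t$ balls of radius $\asymp\sqrt{t}$ gives $\rho_t(x) \ge ct^{-D/2}e^{-Kd(e,x)^2/t}$, and a more delicate chaining with optimized time increments lowers $K$ to $1-\epsilon$.

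The step I expect to be the main obstacle is exactly the sharpness of the constants $1\pm\epsilon$ in the exponentials. Producing \emph{some} Gaussian upper and lower bounds is essentially routine once doubling and the Poincar\'e inequality are available, but pushing the exponential constant to within $\epsilon$ of the geometric value requires the refined weight/self-improvement arguments indicated above, and in the lower bound a careful bookkeeping of how the Harnack constant accumulates along the chain of balls. (Establishing the Poincar\'e inequality on Carnot--Carath\'eodory balls, which underpins the entire Harnack/lower-bound half of the argument, is itself a nontrivial input, though a well-documented one for Carnot groups.) Since all of these pieces are available in the literature, in the paper it would be reasonable to state the theorem with a reference to \cite{purplebook} in place of a full proof.
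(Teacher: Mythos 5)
Your proposal is correct and ends up exactly where the paper does: the paper gives no proof of its own, citing Theorem IV.4.2 of \cite{purplebook} for the upper bound and Theorem 1 of \cite{varopoulos-II} for the lower bound, with only the remark that the normalization $e^{s\Delta/4}$ accounts for a missing factor of $4$ in the exponents. Your scaling reduction to $s=1$ and your Davies-perturbation / Poincar\'e--Harnack-chaining sketch are an accurate account of how those cited results are established, and you correctly single out the sharpness of the constants $1\pm\epsilon$ in the exponentials as the genuinely delicate point that makes a full self-contained proof inappropriate here.
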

\begin{proof}
  The upper bound is Theorem IV.4.2 of \cite{purplebook}.  The lower
  bound is Theorem 1 of \cite{varopoulos-II}.  Note that our choice to
  consider the semigroup $e^{s \Delta/4}$ rather than $e^{s\Delta}$
  accounts for a missing factor of $4$ in the exponents compared to
  the results stated in \cite{purplebook, varopoulos-II}.
\end{proof}

\begin{corollary}\label{polynomial-integrable}
  Any polynomial $p$ in the coordinates $x_{j,k}$ (see Notation
  \ref{adapted-coords}) is in $L^{\infty -}(\rho_s)$.  
\end{corollary}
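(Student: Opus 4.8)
The plan is to combine the Gaussian-type upper bound for $\rho_s$ from Theorem~\ref{heat-kernel-upper-lower} with the elementary fact that the coordinates $x_{j,k}$ grow at most polynomially in the Carnot--Carath\'eodory distance $d(e,x)$. Fix $q \in (0,\infty)$; it suffices to show $\int_G |p|^q\,\rho_s\,dm < \infty$. Fix any $\epsilon \in (0,1)$ and apply the upper bound of Theorem~\ref{heat-kernel-upper-lower}; absorbing the constant $C'(\epsilon)/m(B(e,\sqrt s))$, it is enough to prove
\[
  \int_G |p(x)|^q\, e^{-d(e,x)^2/((1+\epsilon)s)}\,dm(x) < \infty .
\]

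The key step is a coordinate estimate: each $x_{j,k}$ is bounded by a fixed power of $1 + d(e,x)$. Since $d$ induces the Euclidean topology on $G = \mathfrak{g}$ (Section~\ref{sub-riemannian-section}), the unit ball $B(e,1)$ is bounded in the $x_{j,k}$ coordinates, say $B(e,1) \subseteq \{\,|x_{j,k}| \le C_0 \text{ for all } j,k\,\}$. From \eqref{d-dilate} and $\delta_\lambda(e) = e$ we get $B(e,r) = \delta_r(B(e,1))$ for all $r > 0$, and since $\delta_r$ multiplies the $j$-th layer coordinates by $r^j$, it follows that $x \in B(e,r)$ implies $|x_{j,k}(x)| \le C_0 r^j$. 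Taking $r = \max\{d(e,x),1\}$ and using $j \le m$ (the step of $G$) gives $|x_{j,k}(x)| \le C_0 (1 + d(e,x))^{m}$ for every $x$. Consequently there is $N = N(p,q)$ with $|p(x)|^q \le C_1 (1 + d(e,x))^{N}$.

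Finally I would integrate over the annuli $A_k = B(e,k+1)\setminus B(e,k)$ for $k \ge 0$. On $A_k$ one has $(1 + d(e,x))^N \le (k+2)^N$ and $e^{-d(e,x)^2/((1+\epsilon)s)} \le e^{-k^2/((1+\epsilon)s)}$, while $m(A_k) \le m(B(e,k+1)) = (k+1)^D\, m(B(e,1))$ by \eqref{m-dilate} applied to $B(e,k+1) = \delta_{k+1}(B(e,1))$. Hence the integral above is at most
\[
  C_1\, m(B(e,1)) \sum_{k \ge 0} (k+2)^{N} (k+1)^{D}\, e^{-k^2/((1+\epsilon)s)} ,
\]
which converges because each term is a polynomial in $k$ times $e^{-ck^2}$ with $c = 1/((1+\epsilon)s) > 0$. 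Thus $p \in L^q(\rho_s)$ for every $q < \infty$, i.e.\ $p \in L^{\infty-}(\rho_s)$.

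There is no serious obstacle here; the only point needing a moment's care is the coordinate bound, which is immediate once one recalls both that $\delta_r$ acts diagonally on the adapted coordinates with weights $r^j$ and that $d$ metrizes the Euclidean topology, so that $B(e,1)$ is a bounded set. Everything else is the routine observation that the Gaussian factor beats any polynomial against the polynomially growing ball volume $m(B(e,r)) = r^D m(B(e,1))$.
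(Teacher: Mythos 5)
Your proof is correct and follows essentially the same route as the paper's: use the dilation homogeneity to bound the polynomial by a power of $d(e,x)$ (the paper does this by reducing to monomials and taking a supremum over the compact unit sphere $S=\{d(e,x)=1\}$, whereas you dilate the bounded unit ball), and then beat that polynomial growth with the Gaussian upper bound of Theorem~\ref{heat-kernel-upper-lower}. Your explicit annulus computation at the end just fills in what the paper dismisses as ``massive overkill,'' so there is nothing substantive to add.
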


\begin{proof}
  It suffices to consider $p(x) = x_{j,k}^r$ for some fixed $j,k,r$.
  Let $S = \{x : d(e,x) = 1\}$ be the unit sphere of $d$, and let $C =
  \sup_S |p|$ which is finite by the compactness of $S$.  Then the
  inequality $|p(x)| \le C d(e,x)^{rj}$ holds trivially on $S$.  The
  scaling relations $p(\delta_{\lambda}(x)) = \lambda^{rj} p(x)$ and
  \eqref{d-dilate} now imply that $|p(x)| \le C d(e,x)^{rj}$ for all
  $x \in G$.  The result now follows, via massive overkill, from the
  upper bound in Theorem \ref{heat-kernel-upper-lower}.
\end{proof}

\begin{corollary}\label{nabla-then-E}
  If $|\nabla f| \in L^{p+}(\rho_s)$ then $Ef \in L^{p+}(\rho_s)$.
\end{corollary}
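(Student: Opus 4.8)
Throughout, write $|\nabla f|\in L^q(\rho_s)$ for some fixed $q>p$; we must produce $Ef\in L^{q'}(\rho_s)$ for some $q'$ with $p<q'\le q$. The plan is to convert the statement into a bound on individual left-invariant derivatives of $f$ and then exploit the commutator structure of the stratification together with the heat kernel estimates. First I would record the soft facts that do the bookkeeping: $L^{p+}(\rho_s)$ is a vector space; $L^{p+}(\rho_s)\cdot L^{\infty-}(\rho_s)\subseteq L^{p+}(\rho_s)$ by H\"older; and every polynomial in the adapted coordinates lies in $L^{\infty-}(\rho_s)$ by Corollary~\ref{polynomial-integrable}. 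Combining these with Corollary~\ref{E-in-xi-cor}, which expresses $E=\sum_{j,k}\tilde c_{j,k}\widetilde{\xi_{j,k}}$ with polynomial coefficients $\tilde c_{j,k}$, the claim reduces to the following: \emph{for each layer $j$ and each $\xi\in V_j$, if $|\nabla f|\in L^{p+}(\rho_s)$ then $\widetilde{\xi}f\in L^{p+}(\rho_s)$.}

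For the first layer $j=1$ this is immediate, since $\widetilde{\xi}$ is horizontal and hence $|\widetilde{\xi}f|\le \|\xi\|_{V_1}\,|\nabla f|$ pointwise. The substance is in the layers $j\ge 2$, and here the key difficulty is that there is \emph{no} pointwise domination of $\widetilde{\xi}f$ by $|\nabla f|$ (on the Heisenberg group, $\widetilde{\xi}f$ for $\xi\in V_2$ is $\partial_c f$ in exponential coordinates, which carries information not present in the horizontal gradient at a point). I would therefore argue by induction on $j$: writing $\xi\in V_j$ as a finite sum of brackets $[\eta,\zeta]$ with $\eta\in V_1$ and $\zeta\in V_{j-1}$ (using $V_j=[V_1,V_{j-1}]$), we have $\widetilde{\xi}f=\widetilde{\eta}(\widetilde{\zeta}f)-\widetilde{\zeta}(\widetilde{\eta}f)$, where $\widetilde{\eta}f$ is controlled by $|\nabla f|$ and $\widetilde{\zeta}f\in L^{p+}(\rho_s)$ by the inductive hypothesis (the base case $j=1$ being the horizontal bound above).

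To estimate $\|\widetilde{\xi}f\|_{L^{q'}(\rho_s)}$ for suitable $q'\in(p,q)$, I would pair against a test function $g\in L^{(q')'}(\rho_s)$ and integrate by parts, using that left-invariant vector fields are divergence-free for Haar measure $m$ (as $G$ is unimodular). Peeling one derivative at a time off $f$ generates integrals of $\widetilde{\eta}f$ or $\widetilde{\zeta}f$ against $g$ times either a polynomial coefficient (harmless by Corollary~\ref{polynomial-integrable}) or a logarithmic derivative $\widetilde{\eta}\rho_s/\rho_s$, $\widetilde{\zeta}\rho_s/\rho_s$ of the heat kernel, which lie in $L^{\infty-}(\rho_s)$ by the Gaussian-type bounds of Theorem~\ref{heat-kernel-upper-lower} (with the attendant derivative estimates); the terms in which a derivative instead lands on $g$ are removed by a further integration by parts. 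The main obstacle is to organize this iteration so that it actually closes rather than merely returning $\int(\widetilde{\xi}f)g\,\rho_s\,dm$ to itself: the point is to track the bracket $[\widetilde{\eta},\widetilde{\zeta}]$ carefully and verify that the genuinely new terms after one pass are all of the controllable form above, yielding $\|\widetilde{\xi}f\|_{L^{q'}(\rho_s)}\le C\bigl(\|\nabla f\|_{L^q(\rho_s)}+\sum_{\zeta}\|\widetilde{\zeta}f\|_{L^{q'}(\rho_s)}\bigr)<\infty$. (If $f$ is only $C^1$, one first replaces $f$ by $f\ast\rho_\epsilon$ to legitimize the repeated differentiations and lets $\epsilon\to 0$ at the end, using $f\ast\rho_\epsilon\to f$ in $C^1_{\mathrm{loc}}$ and the uniform bounds just obtained.) This completes the induction, and reassembling via Corollary~\ref{E-in-xi-cor} gives $Ef\in L^{p+}(\rho_s)$.
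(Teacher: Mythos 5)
Your first paragraph is, essentially verbatim, the paper's entire proof: Corollary~\ref{E-in-xi-cor} writes $E=\sum_{j,k}\tilde c_{j,k}\widetilde{\xi_{j,k}}$ with polynomial coefficients, Corollary~\ref{polynomial-integrable} places those coefficients in $L^{\infty-}(\rho_s)$, and the H\"older fact $L^{p+}\cdot L^{\infty-}\subseteq L^{p+}$ finishes. Everything from your second paragraph onward is your own addition, and that is where the proposal stops being a proof. The inductive step for $\xi\in V_j$, $j\ge 2$, is stated only as a plan, and you yourself flag the fatal point without resolving it: after pairing $\widetilde{\xi}f=\widetilde{\eta}(\widetilde{\zeta}f)-\widetilde{\zeta}(\widetilde{\eta}f)$ against $g\,\rho_s\,dm$ and integrating by parts, the term in which the derivative lands on the test function $g$ is not controlled by the dual norm of $g$ alone, and integrating by parts a second time simply reproduces $\int(\widetilde{\xi}f)\,g\,\rho_s\,dm$ with no gain; no mechanism for closing this loop is supplied. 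Two further problems: the scheme invokes bounds on the logarithmic derivatives $\widetilde{\eta}\rho_s/\rho_s$, whereas Theorem~\ref{heat-kernel-upper-lower} bounds only $\rho_s$ itself and not its derivatives; and the pointwise identity $\widetilde{\xi}f=\widetilde{\eta}\widetilde{\zeta}f-\widetilde{\zeta}\widetilde{\eta}f$ requires two derivatives of $f$, which the hypothesis does not grant, while the proposed mollification fix would itself depend on the uniform bounds you have not established. So the proposal has a genuine gap at its central step.

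That said, you have put your finger on something real. The coefficients $\tilde c_{j,k}$ are nonzero for $j\ge 2$ (on $\mathbb{H}^3$ one computes $E=x_1\widetilde{\xi_1}+x_2\widetilde{\xi_2}+2x_3\widetilde{\xi_3}$), so the one-line combination of Corollaries~\ref{E-in-xi-cor} and~\ref{polynomial-integrable} implicitly uses that the non-horizontal derivatives $\widetilde{\xi_{j,k}}f$, $j\ge 2$, lie in $L^{p+}(\rho_s)$, and this is not a pointwise consequence of $|\nabla f|\in L^{p+}(\rho_s)$. (In some applications of Corollary~\ref{nabla-then-E}, such as Lemma~\ref{convolution-deriv}, all derivatives $\widetilde{\xi}[\varphi\ast f]$ have already been shown to lie in $L^{p+}(\rho_s)$, so the reduction closes there; in others only the horizontal gradient is assumed.) The constructive response is to raise this as a precise question about how the higher-layer terms are meant to be controlled, not to graft on an incomplete commutator-and-duality argument whose key estimate you acknowledge you cannot close.
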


\begin{proof}
  Combine Corollary \ref{E-in-xi-cor} with Corollary \ref{polynomial-integrable}.
\end{proof}

\begin{lemma}\label{heat-kernel-left}
  (Special case of \cite[Theorem IV.3.1]{purplebook}) Let $r \ge 0$
  and $0 < s < t < \infty$.  There is a constant $C$, depending on
  $r,s,t$, such that for all $y \in G$,
  \begin{equation*}
    \sup_{d(e,x) \le r} \rho_s(xy) \le C \rho_t(y).
  \end{equation*}
\end{lemma}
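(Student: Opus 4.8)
The plan is to deduce the lemma directly from the two-sided Gaussian heat kernel estimates of Theorem~\ref{heat-kernel-upper-lower}, rather than from the semigroup property used in \cite{purplebook}; the fact that we are allowed to pass from time $s$ to a \emph{larger} time $t$ is exactly what makes the Gaussian exponents cooperate. Fix $0 < s < t$. Since $0 < \frac{t-s}{t+s} < 1$, I may choose $\epsilon \in (0,1)$ so small that $(1+\epsilon)s < (1-\epsilon)t$. Applying the upper bound of Theorem~\ref{heat-kernel-upper-lower} at time $s$ to $\rho_s(xy)$ and the lower bound at time $t$ to $\rho_t(y)$ gives
\begin{equation*}
  \frac{\rho_s(xy)}{\rho_t(y)} \le \frac{C'(\epsilon)\, m(B(e,\sqrt{t}))}{C(\epsilon)\, m(B(e,\sqrt{s}))}\, \exp\!\left( \frac{d(e,y)^2}{(1-\epsilon)t} - \frac{d(e,xy)^2}{(1+\epsilon)s} \right),
\end{equation*}
where the prefactor is a finite positive constant independent of $x$ and $y$ (balls have finite positive Haar measure).

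Next I would bound $d(e,xy)$ from below in terms of $d(e,y)$ under the hypothesis $d(e,x) \le r$. By left-invariance \eqref{d-left}, $d(e,xy) = d(x^{-1},y)$, and then the triangle inequality together with \eqref{d-inverse} gives $d(e,xy) \ge d(e,y) - d(e,x^{-1}) = d(e,y) - d(e,x) \ge d(e,y) - r$. Squaring this --- treating separately the trivial case $d(e,y) \le 2r$, in which the right-hand side below is already $\le 0$ --- yields
\begin{equation*}
  d(e,xy)^2 \ge d(e,y)^2 - 2r\, d(e,y) \qquad \text{for all } y \in G,\ d(e,x) \le r .
\end{equation*}

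Substituting this into the exponent above, I obtain an upper bound of the form $-a\, d(e,y)^2 + b\, d(e,y)$, where $a := \frac{1}{(1+\epsilon)s} - \frac{1}{(1-\epsilon)t} > 0$ (this positivity is precisely where the choice of $\epsilon$ and the hypothesis $s < t$ are used) and $b := \frac{2r}{(1+\epsilon)s} \ge 0$. Since a downward parabola is bounded above, this quantity is at most $b^2/(4a)$, a constant depending only on $r,s,t$. Combining, $\rho_s(xy) \le C\, \rho_t(y)$ with $C := \frac{C'(\epsilon)\, m(B(e,\sqrt{t}))}{C(\epsilon)\, m(B(e,\sqrt{s}))}\, e^{b^2/(4a)}$, uniformly over $y \in G$ and over $x$ with $d(e,x) \le r$, which is the asserted inequality.

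There is no real obstacle here: the only points requiring a moment's care are (i) choosing $\epsilon$ small enough that the quadratic coefficient $a$ is strictly positive --- this is exactly the slack afforded by $s < t$ --- and (ii) observing that the linear-in-$d(e,y)$ cross term created by the left shift by $x$ is harmless. (Alternatively, one may simply invoke \cite[Theorem~IV.3.1]{purplebook}, of which this is the stated special case.)
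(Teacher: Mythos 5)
Your proof is correct and follows essentially the same route as the paper's: both deduce the lemma from the two-sided Gaussian estimates of Theorem \ref{heat-kernel-upper-lower}, use the triangle inequality and left-invariance to absorb the shift by $x$ at the cost of a linear term $2r\,d(e,y)$, and choose $\epsilon$ small enough that the hypothesis $s<t$ makes the quadratic coefficient in the exponent strictly positive. The only (harmless) differences are that you treat $\rho_s(xy)$ directly via $d(e,xy)=d(x^{-1},y)$ instead of first reducing to $\rho_s(yx)$ through the symmetry $\rho_s(z)=\rho_s(z^{-1})$ of Lemma \ref{heat-kernel-inverse}, and you finish by bounding the downward parabola $-a\,d(e,y)^2+b\,d(e,y)$ by $b^2/(4a)$ rather than by the paper's two-case argument (negative exponent for $d(e,y)\ge r_0$, continuity and compactness for $d(e,y)<r_0$).
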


\begin{proof}
  Replacing $x,y$ by $x^{-1}, y^{-1}$ and using Lemma
  \ref{heat-kernel-inverse} and \eqref{d-inverse}, it is enough to
  show the result for $\rho_s(yx)$ instead of $\rho_s(xy)$.
  
  Let $x \in B(e,r)$ be arbitrary.  By the bounds in
  Theorem \ref{heat-kernel-upper-lower}, we have
  \begin{align*}
    \frac{\rho_s(yx)}{\rho_t(y)} \le C''(s,t,\epsilon)
    \exp\left(-\left(\frac{d(e,yx)^2}{(1+\epsilon)s} - \frac{d(e,y)^2}{(1-\epsilon)t}\right)\right).
  \end{align*}
  By the left invariance of the distance $d$ and the triangle
  inequality, we have $d(e,yx) \ge d(e,y) - r$, which yields
  \begin{align*}
    \frac{\rho_s(yx)}{\rho_t(y)} &\le C''(s,t,\epsilon)
    \exp\left( -d(e,y)^2 \left(\frac{1}{(1+\epsilon)s} -
    \frac{1}{(1-\epsilon)t}\right) + \frac{2r d(e,y)}{(1+\epsilon)s} -
    \frac{r^2}{(1+\epsilon)s}\right) \\
    &\le C''(s,t,\epsilon)
    \exp\left( -d(e,y)^2 \left(\frac{1}{(1+\epsilon)s} -
    \frac{1}{(1-\epsilon)t}\right) + \frac{2r
      d(e,y)}{(1+\epsilon)s}\right).
  \end{align*}
  Now since $s < t$, we can take $\epsilon$ sufficiently small that $\frac{1}{(1+\epsilon)s} -
  \frac{1}{(1-\epsilon)t} > 0$.  If we now choose some $r_0$ with
  \begin{equation*}
    r_0 > \left(\frac{1}{(1+\epsilon)s} -
  \frac{1}{(1-\epsilon)t}\right)^{-1} \frac{2r}{(1+\epsilon)s}
  \end{equation*}
  then for all $y$ with $d(e,y) \ge r_0$, the exponent is negative and
  we have $\frac{\rho_s(yx)}{\rho_t(y)} \le C''(s,t,\epsilon)$.  This
  suffices, since by continuity the supremum over all $y \in B(e,r_0)$
  is finite.

\end{proof}

\begin{lemma}\label{hk-ratio-Lp}
  Let $s > 0$, $p > 1$ and $0 < t_0 \le t_1 < \frac{p}{p-1} s$.  Then
    \begin{equation*}
      \sup_{t \in [t_0, t_1]} \frac{\rho_t}{\rho_s} \in L^p(\rho_s).
    \end{equation*}
\end{lemma}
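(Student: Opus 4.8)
The plan is to estimate the ratio $\rho_t/\rho_s$ pointwise using the Gaussian bounds of Theorem~\ref{heat-kernel-upper-lower}, and then integrate against $\rho_s\,dm$. First I would apply the upper bound to $\rho_t$ and the lower bound to $\rho_s$: for a fixed small $\epsilon \in (0,1)$,
\begin{equation*}
  \frac{\rho_t(x)}{\rho_s(x)} \le \frac{C'(\epsilon)}{C(\epsilon)} \cdot \frac{m(B(e,\sqrt{s}))}{m(B(e,\sqrt{t}))} \exp\left( -\frac{d(e,x)^2}{(1+\epsilon)t} + \frac{d(e,x)^2}{(1-\epsilon)s} \right).
\end{equation*}
Since $t$ ranges over the compact interval $[t_0, t_1]$, the volume prefactor is bounded uniformly in $t$ (the map $t \mapsto m(B(e,\sqrt t))$ is continuous and positive), so it contributes only a harmless constant $K(\epsilon, t_0, t_1)$. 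The exponent is largest (least negative) when $t = t_1$ and when $(1+\epsilon)^{-1}$, $(1-\epsilon)^{-1}$ are as written; so after taking the supremum over $t \in [t_0,t_1]$ we get a bound of the form
\begin{equation*}
  \sup_{t\in[t_0,t_1]} \frac{\rho_t(x)}{\rho_s(x)} \le K(\epsilon,t_0,t_1) \exp\left( a_\epsilon\, d(e,x)^2 \right), \qquad a_\epsilon := \frac{1}{(1-\epsilon)s} - \frac{1}{(1+\epsilon)t_1}.
\end{equation*}

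Next I would raise this to the $p$-th power and integrate against $\rho_s\,dm$, again using the Gaussian upper bound on the $\rho_s$ appearing in the measure:
\begin{equation*}
  \int_G \left(\sup_{t\in[t_0,t_1]} \frac{\rho_t}{\rho_s}\right)^p \rho_s\,dm \le K^p \int_G e^{p a_\epsilon d(e,x)^2} \cdot \frac{C'(\epsilon)}{m(B(e,\sqrt s))} e^{-d(e,x)^2/(1+\epsilon)s}\,dm(x).
\end{equation*}
This integral converges provided the net Gaussian exponent is negative, i.e.\ provided
\begin{equation*}
  p a_\epsilon - \frac{1}{(1+\epsilon)s} < 0,
\end{equation*}
together with the fact that $e^{-\gamma d(e,x)^2}$ is $m$-integrable for any $\gamma > 0$ — this last point follows from the ball-box / volume-growth estimates, or more simply by noting $e^{-\gamma d(e,x)^2/2} \le C_\gamma \rho_{1/\gamma}(x) \cdot m(B(e,1/\sqrt\gamma))$ via the lower heat-kernel bound and then using that $\rho$ is a probability density together with $e^{-\gamma d^2/2}$ being bounded.

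The crux of the argument — and the only place the hypothesis $t_1 < \tfrac{p}{p-1}s$ is used — is verifying that $\epsilon$ can be chosen so that $p a_\epsilon < \tfrac{1}{(1+\epsilon)s}$. In the limit $\epsilon \to 0^+$ this inequality becomes $p\left(\tfrac{1}{s} - \tfrac{1}{t_1}\right) < \tfrac{1}{s}$, equivalently $p - \tfrac{p s}{t_1} < 1$, i.e.\ $t_1 < \tfrac{p}{p-1}s$, which is exactly our assumption (note $p>1$ makes this meaningful). Since the relevant quantities depend continuously on $\epsilon$, the strict inequality at $\epsilon = 0$ persists for all sufficiently small $\epsilon > 0$; fixing such an $\epsilon$ completes the proof. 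I expect the main obstacle to be purely bookkeeping: tracking the $\epsilon$-dependence carefully enough to see that the threshold $\tfrac{p}{p-1}s$ emerges sharply, and confirming the uniform control of the volume factor $m(B(e,\sqrt t))$ over $t \in [t_0,t_1]$ (which is immediate from continuity and strict positivity, using $t_0 > 0$).
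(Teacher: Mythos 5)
Your proposal is correct and follows essentially the same route as the paper: apply the Gaussian upper bound to $\rho_t$ and the lower bound to $\rho_s$ from Theorem \ref{heat-kernel-upper-lower}, control the volume prefactor uniformly on the compact interval $[t_0,t_1]$, and observe that the net Gaussian exponent is negative for sufficiently small $\epsilon$ precisely because $\frac{p}{t_1}-\frac{p-1}{s}>0$, which is the hypothesis $t_1<\frac{p}{p-1}s$. The only cosmetic difference is that the paper bounds $\rho_t^p/\rho_s^{p-1}$ in one step rather than bounding the ratio first and then multiplying by $\rho_s$, but the limiting exponent as $\epsilon\to 0$ is identical.
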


\begin{proof}
For any $t \in [t_0, t_1]$ we have by
Theorem \ref{heat-kernel-upper-lower} that
\begin{align*}
  \left|\frac{\rho_t(x)}{\rho_s(x)}\right|^p \rho_s(x) &= \frac{\rho_t(x)^p}{\rho_s(x)^{p-1}}
   \\ 
  &\le \frac{C'(\epsilon)^p m(B(e, \sqrt{s}))^{p-1}}{C(\epsilon)^{p-1}
     m(B(e, \sqrt{t}))^p}
   \exp\left(-\left(\frac{p}{(1+\epsilon)t} -
  \frac{p-1}{(1-\epsilon)s}\right) d(e,x)^2\right) \\
  &\le \frac{C'(\epsilon)^p m(B(e, \sqrt{s}))^{p-1}}{C(\epsilon)^{p-1}
     m(B(e, \sqrt{t_0}))^p}
   \exp\left(-\left(\frac{p}{(1+\epsilon)t_1} -
  \frac{p-1}{(1-\epsilon)s}\right) d(e,x)^2\right).
\end{align*}
The right side is independent of $t$, and will be integrable on $G$
(with respect to $m$) provided that $\frac{p}{(1+\epsilon)t_1} -
\frac{p-1}{(1-\epsilon)s} > 0$.  But since
\begin{equation*}
  \lim_{\epsilon \to 0} \frac{p}{(1+\epsilon)t_1 } -
  \frac{p-1}{(1-\epsilon)s} = \frac{p}{t_1} - \frac{p-1}{s}  > 0 
\end{equation*}
we can choose $\epsilon$ sufficiently small that this
coefficient is indeed positive.  
\end{proof}

\begin{lemma}
  The heat kernel $\rho_s$ obeys the scaling relation
  \begin{equation}\label{rho-dilate}
    \rho_s(\delta_\lambda(y)) = |\lambda|^{-D} \rho_{s|\lambda|^{-2}}(y).
  \end{equation}
\end{lemma}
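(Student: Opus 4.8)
The plan is to derive the scaling relation from the dilation covariance \eqref{semigroup-dilate} of the heat semigroup, together with the convolution formula $e^{s\Delta/4}f = f \ast \rho_s$ and the Jacobian identity \eqref{m-dilate-integrate}. (An alternative would be to observe that both sides of \eqref{rho-dilate}, as functions of $(s,y)$, solve the heat equation $\partial_s u = \tfrac14\Delta u$ with the same initial data and invoke uniqueness, but the convolution route is more self-contained given what has been set up.)

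Fix $\lambda > 0$ (for $\lambda = 0$ the right-hand side is undefined, and throughout $|\lambda| = \lambda$). Apply \eqref{semigroup-dilate} to an arbitrary $f \in C_c^\infty(G)$ and evaluate both sides at the identity $e$. Since $\delta_\lambda(e) = e$, the right-hand side becomes $(e^{s\lambda^2\Delta/4}f)(e) = (f \ast \rho_{s\lambda^2})(e) = \int_G f(z^{-1})\rho_{s\lambda^2}(z)\,dz$. For the left-hand side I would write $e^{s\Delta/4}[f\circ\delta_\lambda] = (f\circ\delta_\lambda)\ast\rho_s$ and use that $\delta_\lambda$ is a group automorphism to get
\[
\big((f\circ\delta_\lambda)\ast\rho_s\big)(e) = \int_G f\big(\delta_\lambda(y^{-1})\big)\rho_s(y)\,dy = \int_G f\big(\delta_\lambda(y)^{-1}\big)\rho_s(y)\,dy.
\]

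Next I would change variables $z = \delta_\lambda(y)$ in this last integral. Writing the integrand as $H(\delta_\lambda(y))$ with $H(z) = f(z^{-1})\rho_s(\delta_{\lambda^{-1}}(z))$ and applying \eqref{m-dilate-integrate}, the left-hand side becomes $\lambda^{-D}\int_G f(z^{-1})\rho_s(\delta_{\lambda^{-1}}(z))\,dz$. Equating the two expressions gives
\[
\int_G f(z^{-1})\big(\lambda^{-D}\rho_s(\delta_{\lambda^{-1}}(z)) - \rho_{s\lambda^2}(z)\big)\,dz = 0
\]
for every $f \in C_c^\infty(G)$. Since $g \mapsto g\circ(\,\cdot\,)^{-1}$ is a bijection of $C_c^\infty(G)$ onto itself, the function $z \mapsto f(z^{-1})$ ranges over all of $C_c^\infty(G)$, so the bracketed quantity vanishes almost everywhere; by smoothness of the heat kernel (Section \ref{subsec:heatkernel}) it vanishes identically, i.e.\ $\lambda^{-D}\rho_s(\delta_{\lambda^{-1}}(z)) = \rho_{s\lambda^2}(z)$ for all $z$. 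Replacing $\lambda$ by $\lambda^{-1}$ then yields $\rho_s(\delta_\lambda(y)) = \lambda^{-D}\rho_{s\lambda^{-2}}(y)$, which is \eqref{rho-dilate}.

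The only delicate point is the passage from the integral identity to the pointwise one: one must note that $z \mapsto f(z^{-1})$ exhausts the test functions (immediate, since inversion is a diffeomorphism of $G$) and that $\rho_\bullet$ is regular enough to upgrade an almost-everywhere equality to an everywhere equality (immediate from the smoothness and strict positivity of $\rho_s$ recalled above). Everything else is routine bookkeeping with the automorphism property of $\delta_\lambda$ and the scaling \eqref{m-dilate} of Haar measure.
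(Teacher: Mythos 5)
Your argument is correct and is precisely the route the paper takes: the paper's proof is a one-line citation of the semigroup dilation covariance \eqref{semigroup-dilate} and the Haar measure scaling \eqref{m-dilate}, and your proposal simply carries out that computation in full (via the convolution representation, evaluation at the identity, and the change of variables $z=\delta_\lambda(y)$). The details, including the passage from the integral identity to the pointwise one via smoothness of $\rho_s$, are all sound.
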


\begin{proof}
  This follows from the corresponding scaling properties of the
  semigroup $e^{s\Delta/4}$ \eqref{semigroup-dilate} and of the Haar
  measure $m$ \eqref{m-dilate}.
\end{proof}

\begin{remark}\label{scaling-remark}
  Using \eqref{xi-f-dilation} and \eqref{rho-dilate}, one may verify,
  by replacing $f$ by an appropriate dilation $f \circ \delta_r$, that
  each of the statements \eqref{LSI}, \eqref{sLSI}, \eqref{sHC} in our
  main theorem holds for one $s>0$ iff it holds for all $s>0$, with
  the same constants $c,\beta$.
\end{remark}

\begin{lemma}\label{time-space}
  Suppose $f \in C^2(G) \cap L^1(\rho_s)$ and $Ef, |\nabla f|, \Delta f \in
  L^1(\rho_s)$. Then 
  \begin{equation}
    \int_G E f\, \rho_s\,dm = \frac{s}{2} \int_G \Delta f
    \,\rho_s\,dm.
  \end{equation}
  Moreover, the same result holds if we assume $\Delta f \ge 0$ instead of
  $\Delta f \in L^1(\rho_s)$.
\end{lemma}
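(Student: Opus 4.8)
The plan is to reduce everything to a single pointwise identity relating $E$ to the heat operator, and then to integrate it against $f$ using two integrations by parts, cutting off to handle the non-compactness of $G$.

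First I would record the identity. Put $\lambda = e^{\tau}$ in the scaling relation \eqref{rho-dilate}, so that $\rho_s(\delta_{e^{\tau}}(y)) = e^{-\tau D}\rho_{se^{-2\tau}}(y)$, and differentiate at $\tau = 0$. The left-hand side produces $E\rho_s$ by the definition of $E$, and the right-hand side produces $-D\rho_s - 2s\,\partial_s\rho_s$. Since $\rho_s$ is the right convolution kernel of $e^{s\Delta/4}$, differentiating $e^{s\Delta/4}f = f\ast\rho_s$ in $s$ and using \eqref{convolution-diff} to move $\Delta$ onto $\rho_s$ gives $\partial_s\rho_s = \tfrac14\Delta\rho_s$; moreover $\rho$ is jointly smooth in $(s,x)$ by hypoellipticity of the heat operator, so all of this is legitimate. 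Hence
\[
  E\rho_s + D\rho_s + \tfrac{s}{2}\Delta\rho_s = 0.
\]
This is the only place where any property of the heat kernel beyond smoothness and positivity is used.

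Next I would localize. Fix $\chi \in C_c^{\infty}(G)$ with $0 \le \chi \le 1$ and $\chi \equiv 1$ near $e$, and set $\chi_R := \chi\circ\delta_{1/R}$. Because the dilations commute, $E\chi_R = (E\chi)\circ\delta_{1/R}$, while \eqref{xi-f-dilation} and \eqref{Delta-dilate} give $|\nabla\chi_R| \le R^{-1}\|\nabla\chi\|_{\infty}$ and $|\Delta\chi_R| \le R^{-2}\|\Delta\chi\|_{\infty}$; moreover $\delta_{1/R}(x)\to e$ for each fixed $x$, so $\chi_R \to 1$ pointwise and $E\chi_R, \nabla\chi_R, \Delta\chi_R$ all vanish pointwise for $R$ large. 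Now $f\chi_R$ is $C^2$ with compact support and $\rho_s$ is smooth, so two boundary-term-free integrations by parts are available: for the first, $\operatorname{div}_m E = D$ (in exponential coordinates $E = \sum_{j,k} j\, x_{j,k}\partial_{x_{j,k}}$ as in \eqref{E-coords}, $m$ is Lebesgue measure, and $\sum_{j,k} j = \sum_j j\dim V_j = D$); for the second, $\Delta$ is symmetric on $L^2(m)$, since $G$ is unimodular and hence left-invariant vector fields are $m$-divergence-free. Carrying these out and applying the identity above, one obtains
\begin{multline*}
  \int_G \chi_R\,(Ef)\,\rho_s\,dm = -\int_G f\,(E\chi_R)\,\rho_s\,dm + \frac{s}{2}\int_G \chi_R\,(\Delta f)\,\rho_s\,dm \\ + s\int_G (\nabla f\cdot\nabla\chi_R)\,\rho_s\,dm + \frac{s}{2}\int_G f\,(\Delta\chi_R)\,\rho_s\,dm.
\end{multline*}

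Finally I would let $R \to \infty$. The left side tends to $\int_G Ef\,\rho_s\,dm$ by dominated convergence, the integrand being bounded by $|Ef|\rho_s \in L^1(m)$. On the right, the first, third and fourth integrals tend to $0$ by dominated convergence: each integrand is bounded by a constant multiple of $|f|\rho_s$ or $|\nabla f|\rho_s$ — both in $L^1(m)$ by hypothesis — and tends to $0$ pointwise. If $\Delta f \in L^1(\rho_s)$, the second integral tends to $\tfrac{s}{2}\int_G \Delta f\,\rho_s\,dm$ by dominated convergence, giving the stated identity. If instead $\Delta f \ge 0$, the other three limits are unchanged, so $\lim_R \tfrac{s}{2}\int_G \chi_R(\Delta f)\rho_s\,dm$ exists and equals the finite number $\int_G Ef\,\rho_s\,dm$; Fatou's lemma then shows $\Delta f \in L^1(\rho_s)$, after which dominated convergence finishes the argument exactly as before. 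I expect the main (if modest) obstacle to be precisely this limiting argument together with justifying the two integrations by parts on the non-compact group with only $L^1$ control — which is exactly what the dilation cutoffs $\chi_R$, the Gaussian decay of $\rho_s$, and the pointwise identity are arranged to handle; beyond that the proof is bookkeeping.
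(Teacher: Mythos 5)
Your proof is correct and follows essentially the same route as the paper's: both rest on the dilation-scaling relation \eqref{rho-dilate} combined with the heat equation and the symmetry of $\Delta$, and both remove the compact-support restriction with dilated cutoffs $\chi\circ\delta_{1/R}$, handling the $\Delta f \ge 0$ case by positivity (you via Fatou, the paper via monotone convergence). The only difference is organizational — you differentiate the scaling relation pointwise to get $E\rho_s + D\rho_s + \tfrac{s}{2}\Delta\rho_s = 0$ and then integrate by parts against $f\chi_R$, whereas the paper differentiates the integrated identity $\int f\circ\delta_{e^r}\,\rho_s\,dm = \int f\,\rho_{se^{2r}}\,dm$ for compactly supported $f$ and then applies it to $\phi_n f$ — but this is the same computation in a different order.
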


\begin{proof}
  Suppose first that $f \in C^\infty_c(G)$.  By
  \eqref{semigroup-dilate} we have
  \begin{equation*}
    \int_G f \circ \delta_{e^{r}}\,\rho_s\,dm = \int_G f \,\rho_{s e^{2r}}\,dm.
  \end{equation*}
  Differentiating under the integral sign at $r = 0$, we
  obtain
  \begin{equation*}
    \int_G Ef\,\rho_s\,dm = 2s \int_G f\,\frac{d}{ds} \rho_s\,dm =
    \frac{s}{2} \int_G f\,\Delta \rho_s\,dm = \frac{s}{2} \int_G
    \Delta f\,\rho_s\,dm.
  \end{equation*}
  Now to show the general case, let $\phi \in C^\infty_c(G)$ be a
  cutoff function with $\phi = 1$ on a neighborhood of the identity
  $e$, and set $\phi_n = \phi \circ \delta_{1/n}$.  Then it is easy to
  check that
  \begin{equation*}
    \nabla \phi_n = \frac{1}{n} (\nabla \phi) \circ \delta_{1/n},
    \qquad \Delta \phi_n = \frac{1}{n^2} (\Delta \phi) \circ
    \delta_{1/n}, \qquad E \phi_n = (E \phi) \circ \delta_{1/n}
  \end{equation*}
  Hence we have, pointwise and boundedly,
  \begin{equation*}
    \phi_n \to 1, \qquad |\nabla \phi_n| \to 0, \qquad \Delta \phi_n
     \to 0, \qquad E \phi_n \to 0,
  \end{equation*}
  the last following from the fact that $E \phi = 0$ on a neighborhood
  of $e$.  Applying our result to $\phi_n f$, we have
  \begin{align*}
    &\int_G E \phi_n \cdot f\,\rho_s\,dm +\int_G \phi_n \cdot Ef
    \,\rho_s\,dm \\
    &\quad = \int_G \Delta \phi_n \cdot f \,\rho_s\,dm
    + \int_G g(\nabla \phi_n, \nabla f)\,\rho_s\,dm + \int_G \phi_n \cdot
    \Delta f\,\rho_s\,dm.
  \end{align*}
  By dominated convergence, using the integrability assumptions on $f$
  and its derivatives, letting $n \to \infty$ gives the desired
  identity.

  If we only assume $\Delta f \ge 0$, then if we choose $\phi_n$ with
  a little more care, we can get $\phi_n \to 1$ monotonically.  Then
  we can repeat the argument above, in which we have $\int_g \phi_n
  \cdot \Delta f\,\rho_s\,dm \to \int_G \Delta f\,\rho_s\,dm$ by
  monotone convergence instead of dominated convergence.
\end{proof}

\section{Log-subharmonic functions}\label{sec:LSH}

In general, a function $f : G \to [0,\infty)$ on $G$ is said to be
  log-subharmonic (LSH) if $\log f$ is subharmonic with respect to the
  sub-Laplacian $\Delta$.  There are many possible notions of
  subharmonicity in this setting.  In this paper, we shall work primarily
  with a strong ``classical'' notion of subharmonicity, in order to
  avoid obscuring the main ideas with technicalities; but see Section
  \ref{weak-lsh} below, where we discuss how the results of this paper can be
  applied to functions which are log-subharmonic in a weaker sense.

\begin{definition}\label{lsh-def}
  Suppose $f \in C^2(G)$.  We will say $f$ is \defword{subharmonic}
  if $\Delta f \ge 0$.  We will say $f$ is \defword{log-subharmonic
    (LSH)} if $f > 0$ and $\Delta \log f \ge 0$.
\end{definition}

\begin{lemma}\label{LSH-ineq-lemma}
  If $f \in C^2(G)$ and $f > 0$, then $f$ is LSH if and only if
  \begin{equation}\label{LSH-ineq}
    \Delta f \ge \frac{|\nabla f|^2}{f}.
  \end{equation}
  In particular, LSH functions are subharmonic.
\end{lemma}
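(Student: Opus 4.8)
The plan is to compute $\Delta \log f$ directly using the chain rule for the sub-Laplacian, expressed in terms of an orthonormal basis $\xi_1, \dots, \xi_n$ of $V_1$ and the associated left-invariant vector fields $\widetilde{\xi_i}$. Since $f > 0$, the function $\log f$ is $C^2$, so $\Delta \log f$ makes classical sense. First I would recall that $\Delta g = \sum_i \widetilde{\xi_i}^2 g$ and apply this with $g = \log f$. Using $\widetilde{\xi_i} \log f = \frac{\widetilde{\xi_i} f}{f}$ and differentiating once more,
\begin{equation*}
  \widetilde{\xi_i}^2 \log f = \frac{\widetilde{\xi_i}^2 f}{f} - \frac{(\widetilde{\xi_i} f)^2}{f^2}.
\end{equation*}
Summing over $i$ gives
\begin{equation*}
  \Delta \log f = \frac{\Delta f}{f} - \frac{1}{f^2} \sum_{i=1}^n (\widetilde{\xi_i} f)^2 = \frac{\Delta f}{f} - \frac{|\nabla f|^2}{f^2},
\end{equation*}
where in the last step I use that $|\nabla f|^2 = \langle \nabla f, \nabla f \rangle = \sum_i (\widetilde{\xi_i} f)^2$, which follows immediately from the definition of the sub-gradient $\nabla f = \sum_i (\widetilde{\xi_i} f)\, \widetilde{\xi_i}$ together with the orthonormality of the $\widetilde{\xi_i}(x)$ in $H_x$.

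With this identity in hand, the equivalence is immediate: $\Delta \log f \ge 0$ holds if and only if $\frac{\Delta f}{f} \ge \frac{|\nabla f|^2}{f^2}$, and since $f > 0$ we may multiply through by $f > 0$ to get the equivalent statement $\Delta f \ge \frac{|\nabla f|^2}{f}$, which is \eqref{LSH-ineq}. The final sentence of the lemma follows at once, since the right-hand side $\frac{|\nabla f|^2}{f}$ is nonnegative (as $f > 0$), so \eqref{LSH-ineq} forces $\Delta f \ge 0$, i.e. $f$ is subharmonic.

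There is essentially no obstacle here; the only point requiring a word of care is the verification that $\sum_i (\widetilde{\xi_i} f)^2 = |\nabla f|^2$, i.e. that the chosen basis is orthonormal pointwise in each fiber $H_x$ — but this is exactly how the sub-Riemannian metric was defined (by left-translating the inner product on $V_1$), so the $\widetilde{\xi_i}(x) = (L_x)_* \xi_i$ form an orthonormal basis of $H_x$ for every $x$. One should also note that the computation of $\Delta \log f$ is independent of the choice of orthonormal basis, which was already observed when $\Delta$ and $\nabla$ were defined.
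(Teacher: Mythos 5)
Your proof is correct and follows essentially the same route as the paper: both establish the identity $\Delta \log f = \frac{\Delta f}{f} - \frac{|\nabla f|^2}{f^2}$ via the chain and product rules and then divide by $f>0$; you merely carry out the computation explicitly in the orthonormal frame $\widetilde{\xi_i}$. The extra verifications you include (that $\sum_i(\widetilde{\xi_i}f)^2=|\nabla f|^2$ and the deduction of subharmonicity) are fine and consistent with the paper.
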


\begin{proof}

  By the chain and product rules,
  \begin{equation*}
    \Delta \log(f) = -\frac{|\nabla f|^2}{f^2} + \frac{\Delta f}{f} =
    \frac{1}{f} \left( - \frac{|\nabla f|^2}{f} + \Delta f\right)
  \end{equation*}
  so that $\Delta \log(f) \ge 0$ iff $\Delta f \ge \frac{|\nabla f|^2}{f}$.
\end{proof}

\begin{proposition}\label{lsh-properties}
  Suppose $f,g$ are LSH.  The following functions are LSH:
  \begin{enumerate}
  \item \label{lsh-const} \label{obvious-start} Positive constants
  \item \label{lsh-prod} $fg$
  \item \label{lsh-power} $f^p$ for any $p > 0$ \label{obvious-end}
  \item \label{lsh-sum} $f+g$
  \item \label{lsh-dilate} $f \circ \delta_{\lambda}$ for any $\lambda > 0$
  \end{enumerate}
\end{proposition}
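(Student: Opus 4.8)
The plan is to verify each item by examining $\Delta$ applied to the logarithm, invoking Definition \ref{lsh-def} directly for items (1)--(3) and (5), and the equivalent formulation of Lemma \ref{LSH-ineq-lemma} for the sum in item (4).

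For item (1), a positive constant $c$ has $\log c$ constant, so $\Delta \log c = 0 \ge 0$. Items (2) and (3) rest on the fact that the logarithm converts products and powers into sums and scalar multiples: $\log(fg) = \log f + \log g$ and $\log(f^p) = p \log f$. Since $\Delta$ is linear, $\Delta \log(fg) = \Delta \log f + \Delta \log g \ge 0$ and $\Delta \log(f^p) = p\, \Delta \log f \ge 0$, while positivity of $fg$ and $f^p$ is immediate. For item (5), I would apply the dilation identity \eqref{Delta-dilate} to the function $u = \log f$: since $(\log f) \circ \delta_\lambda = \log(f \circ \delta_\lambda)$, we get $\Delta \log(f \circ \delta_\lambda) = \lambda^2 (\Delta \log f) \circ \delta_\lambda \ge 0$, and clearly $f \circ \delta_\lambda > 0$.

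The only item requiring genuine work is the sum in item (4). By Lemma \ref{LSH-ineq-lemma} it suffices to show $\Delta(f+g) \ge |\nabla(f+g)|^2/(f+g)$. Since $f$ and $g$ are LSH, that same lemma gives $\Delta f \ge |\nabla f|^2/f$ and $\Delta g \ge |\nabla g|^2/g$; adding these and using linearity of $\Delta$ and $\nabla$, the claim reduces to the pointwise inequality
\[
  \frac{|\nabla f|^2}{f} + \frac{|\nabla g|^2}{g} \ge \frac{|\nabla f + \nabla g|^2}{f+g}
\]
at each point of $G$, where the norm and inner product are those of $H_x$. I would prove this by clearing the denominator: multiplying through by $f+g > 0$ and expanding $|\nabla f + \nabla g|^2 = |\nabla f|^2 + 2\langle \nabla f, \nabla g\rangle + |\nabla g|^2$, the inequality becomes
\[
  \frac{g}{f}|\nabla f|^2 + \frac{f}{g}|\nabla g|^2 \ge 2\langle \nabla f, \nabla g\rangle,
\]
which follows from the arithmetic--geometric mean inequality, $\tfrac{g}{f}|\nabla f|^2 + \tfrac{f}{g}|\nabla g|^2 \ge 2|\nabla f|\,|\nabla g|$, combined with the Cauchy--Schwarz inequality $\langle \nabla f, \nabla g\rangle \le |\nabla f|\,|\nabla g|$. (Conceptually, this is just the joint convexity and degree-one homogeneity of the perspective function $(v,t) \mapsto |v|^2/t$ on $H_x \times (0,\infty)$.)

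I do not anticipate a serious obstacle. The one mildly delicate point is bookkeeping: ensuring the pointwise inequality in item (4) is correctly stated and applied at each $x \in G$, with $\nabla f(x), \nabla g(x) \in H_x$ and the metric $\langle \cdot, \cdot \rangle_x$, and noting that the sum of two subharmonic functions is subharmonic so that $f+g$ and all the other functions remain in $C^2(G)$.
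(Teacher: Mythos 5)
Your proof is correct, and for items (1)--(3) and (5) it matches the paper's (the paper simply calls (1)--(3) ``immediate'' and derives (5) from \eqref{Delta-dilate} exactly as you do). For the sum in item (4), however, you take a genuinely different route. The paper follows a trick from \cite[Proposition 2.2]{gkl2010}: writing $u = \log f$, $v = \log g$, fixing a point $x$ and assuming without loss of generality that $\Delta u(x) \ge \Delta v(x)$, it reduces the claim to the nonnegativity of $\Delta\psi(u-v)$ with $\psi(t) = \log(e^t+1)$, which follows from $\psi', \psi'' > 0$ via the chain rule. That argument uses only the subharmonicity of $\log f$ and $\log g$ and never invokes the gradient characterization. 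You instead use Lemma \ref{LSH-ineq-lemma} and reduce everything to the pointwise inequality
\[
  \frac{|\nabla f|^2}{f} + \frac{|\nabla g|^2}{g} \ge \frac{|\nabla f + \nabla g|^2}{f+g},
\]
i.e.\ the joint convexity (with degree-one homogeneity) of the perspective function $(v,t) \mapsto |v|^2/t$, verified by clearing denominators and applying AM--GM plus Cauchy--Schwarz; your algebra checks out. Your version is more quantitative and arguably more self-contained, since the convexity inequality is an honest pointwise estimate with no case split; the paper's version is slicker in that it needs no inner-product structure on the horizontal spaces beyond what is already packaged in $\Delta$, and the ``WLOG at a point'' device generalizes more readily to settings where one only knows $\Delta \log f \ge 0$ distributionally rather than the identity of Lemma \ref{LSH-ineq-lemma}. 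Either proof is acceptable here since Definition \ref{lsh-def} restricts attention to $C^2$ functions.
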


\begin{proof}
  Items \ref{obvious-start}--\ref{obvious-end} are immediate.  

  For item \ref{lsh-sum}, we use a trick suggested in
  \cite[Proposition 2.2]{gkl2010}.  We have that $u = \log f$ and $v =
  \log g$ are subharmonic.  Fix $x \in G$ and assume without loss of
  generality that $\Delta u(x) \ge \Delta v(x)$.  Now
  \begin{equation*}
    \Delta \log(f+g) = \Delta \log(e^u + e^v) = \Delta [v +
    \log(e^{u-v}+1)] \ge \Delta \log(e^{u-v}+1).
  \end{equation*}
  Let $\psi(t) = \log(e^t+1)$ and note that $\psi'(t) =
  \frac{e^t}{1+e^t} > 0$ and $\psi''(t) = \frac{e^t}{(1+e^t)^2} > 0$.
  By the chain and product rules, we have
  \begin{equation*}
    \Delta \log(e^{u-v}+1) = \Delta \psi(u-v) = \psi''(u-v) |\nabla
    [u-v]|^2 + \psi'(u-v) \Delta[u-v].
  \end{equation*}
  This is nonnegative at $x$ since by assumption $\Delta u(x) \ge
  \Delta v(x)$.

  Item \ref{lsh-dilate} is an immediate consequence of
  \eqref{Delta-dilate} which implies that
  \begin{equation*}
    \Delta \log(f \circ
    \delta_\lambda) = \lambda^2 (\Delta \log(f)) \circ \delta_{\lambda}.
    \end{equation*}
\end{proof}

\begin{lemma}\label{convolution-LSH}
  If $f$ is LSH and $\varphi \in C^\infty_c(G)$ is nonnegative then
  $\varphi \ast f$ is LSH.
\end{lemma}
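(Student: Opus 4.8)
The plan is to reduce the log-subharmonicity of $\varphi \ast f$ to the log-subharmonicity of $f$ by exploiting two facts: first, that convolution with a compactly supported smooth function commutes with the left-invariant vector fields $\widetilde{\xi_i}$, by \eqref{convolution-diff}, so that $\Delta(\varphi \ast f) = \varphi \ast (\Delta f)$; and second, the pointwise characterization of LSH functions from Lemma \ref{LSH-ineq-lemma}, namely that $f > 0$ is LSH if and only if $f \Delta f \ge |\nabla f|^2$. Combining these, it suffices to show that $(\varphi \ast f)(x) \cdot (\varphi \ast \Delta f)(x) \ge |\nabla(\varphi \ast f)(x)|^2$ at every point $x$, given that $f(y) \Delta f(y) \ge |\nabla f(y)|^2$ for all $y$.

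The heart of the matter is a Cauchy--Schwarz argument with respect to the measure $\varphi(y)\,dy$. First I would note $\varphi \ast f$ is positive (as the convolution of a nonnegative function with a positive one, using that $\rho_s > 0$ forces $f$ to be positive on a set of full measure, or more simply that $f > 0$ everywhere by the LSH hypothesis and $\varphi \ge 0$ is not identically zero — though one should handle the degenerate case $\varphi \equiv 0$ trivially). Then, writing $\nabla(\varphi \ast f) = \sum_i (\widetilde{\xi_i}(\varphi \ast f)) \widetilde{\xi_i} = \sum_i (\varphi \ast \widetilde{\xi_i} f)\widetilde{\xi_i}$, the squared norm is $|\nabla(\varphi \ast f)(x)|^2 = \sum_i \left( \int \varphi(y)\, (\widetilde{\xi_i} f)(xy^{-1})\,dy \right)^2$ (using the convolution formula $(\varphi \ast f)(x) = \int \varphi(y) f(xy^{-1})\,dy$, after checking which of the two forms in \eqref{convolution-diff} is the relevant one — it is $\widetilde{\xi}[\varphi \ast f] = \varphi \ast (\widetilde{\xi}f)$). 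By Cauchy--Schwarz in $L^2(\varphi\,dy)$, each term is bounded by $\left(\int \varphi(y)\,dy\right)\left(\int \varphi(y) (\widetilde{\xi_i}f)(xy^{-1})^2\,dy\right)$; but this loses a factor and is not quite what is needed.

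The correct manoeuvre is a vector-valued Cauchy--Schwarz: with $g = xy^{-1}$, apply the inequality $\left|\int \varphi(y) \nabla f(g)\,dy\right|^2 \le \left(\int \varphi(y) \frac{|\nabla f(g)|^2}{f(g)}\,dy\right)\left(\int \varphi(y) f(g)\,dy\right)$, which follows from Cauchy--Schwarz applied to the pairing of $\sqrt{\varphi(y)/f(g)}\,\nabla f(g)$ against $\sqrt{\varphi(y) f(g)}$ in each coordinate and summing. Then the LSH hypothesis \eqref{LSH-ineq} gives $\frac{|\nabla f(g)|^2}{f(g)} \le \Delta f(g)$, so the first factor is at most $\int \varphi(y) \Delta f(g)\,dy = (\varphi \ast \Delta f)(x)$, while the second factor is exactly $(\varphi \ast f)(x)$. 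This yields $|\nabla(\varphi \ast f)(x)|^2 \le (\varphi \ast \Delta f)(x)(\varphi \ast f)(x) = \Delta(\varphi \ast f)(x) \cdot (\varphi \ast f)(x)$, which by Lemma \ref{LSH-ineq-lemma} says precisely that $\varphi \ast f$ is LSH. The one technical point to verify carefully is that $\nabla(\varphi\ast f)(x)$ really equals the $H_x$-vector $\sum_i (\varphi \ast \widetilde{\xi_i}f)(x)\widetilde{\xi_i}(x)$ so that the coordinatewise Cauchy--Schwarz legitimately bounds its norm — this is immediate from the definition of $\nabla$ together with \eqref{convolution-diff}, and differentiation under the integral sign is justified by $\varphi \in C^\infty_c$. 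I expect no real obstacle here; the only subtlety is getting the weighting in the Cauchy--Schwarz step right, which is exactly the trick that makes the $|\nabla f|^2/f \le \Delta f$ form of log-subharmonicity the natural one to use.
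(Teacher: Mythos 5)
Your argument is correct and is essentially the paper's: the paper also reduces to the characterization $\Delta(\varphi\ast f)\cdot(\varphi\ast f)\ge|\nabla(\varphi\ast f)|^2$ via $\Delta[\varphi\ast f]=\varphi\ast\Delta f$ and then applies Jensen's inequality with the jointly convex function $\psi(u,v)=u^2/v$ coordinatewise, which is exactly your weighted Cauchy--Schwarz step. One small slip: under the paper's convention $(\varphi\ast f)(x)=\int\varphi(y)f(y^{-1}x)\,dy$, not $\int\varphi(y)f(xy^{-1})\,dy$ (the latter is $f\ast\varphi$), but this does not affect the argument since you correctly identified $\widetilde{\xi}[\varphi\ast f]=\varphi\ast(\widetilde{\xi}f)$ as the relevant identity.
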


\begin{proof}
  We will show that $\varphi \ast f$ satisfies \eqref{LSH-ineq}.  By
  rescaling, let us suppose without loss of generality that $\int_G
  \varphi \,dm = 1$.  Fix an orthonormal basis $\xi_1, \dots, \xi_n$
  for $V_1$.  From \eqref{Delta-def} and \eqref{convolution-diff}, we
  have $\Delta [ \varphi \ast f] = \varphi \ast \Delta f$, and so
  Lemma \ref{LSH-ineq-lemma} gives
  \begin{equation*}
    \Delta [ \varphi \ast f] = \varphi \ast \Delta f \ge \varphi \ast
    \frac{|\nabla f|^2}{f} = \sum_{i=1}^n \varphi \ast \frac{(\widetilde{\xi_i} f)^2}{f}. 
  \end{equation*}
  Applying the multivariate Jensen inequality with the convex function
  $\psi(u,v) = u^2/v$ and the probability measure $\varphi\,dm$, we have
  \begin{align*}
    \left(\varphi \ast \frac{(\widetilde{\xi_i} f)^2}{f}\right)(x) &= \int_G
    \varphi(y) \frac{(\widetilde{\xi_i} f(y^{-1} x))^2}{f(y^{-1} x)}\,dy \\ &\ge
    \frac{\left(\int_G \varphi(y) \widetilde{\xi_i} f(y^{-1} x)\,dy\right)^2}{\int_G
      \varphi(y) f(y^{-1} x)\,dy} \\
    &= \frac{(\varphi \ast (\widetilde{\xi_i} f))(x)^2}{(\varphi \ast f)(x)} \\
    &= \frac{(\widetilde{\xi_i}[\varphi \ast f](x))^2}{(\varphi \ast f)(x)}
  \end{align*}
  using \eqref{convolution-diff} again, since $\widetilde{\xi_i}$ is left-invariant.
  Thus we have
  \begin{equation*}
    \Delta [ \varphi \ast f] \ge \sum_{i=1}^n \frac{(\widetilde{\xi_i}[\varphi \ast
        f])^2}{\varphi \ast f} = \frac{|\nabla (\varphi \ast
      f)|^2}{\varphi \ast f} 
  \end{equation*}
  and so by Lemma \ref{LSH-ineq-lemma}, $\varphi \ast f$ is LSH.
\end{proof}

\section{Examples and special cases}\label{examples-sec}

\subsection{Euclidean space}

\begin{example}\label{euclidean-ex}
  As a trivial example, $G = \mathbb{R}^n$ with Euclidean addition is
  an (abelian) stratified Lie group of step 1.  (Indeed, these are all
  the step 1 stratified Lie groups.)  Here the Lie bracket is zero and
  the dilation is $\delta_\lambda(x) = \lambda x$.  If we equip $V_1 =
  \mathfrak{g} = \mathbb{R}^n$ with the Euclidean inner product, then
  the sub-Laplacian and sub-gradient are the usual Euclidean Laplacian
  and gradient, and the Carnot--Carath\'eodory distance $d$ is
  Euclidean distance.  The heat kernel $\rho_s$ is the Gaussian
  density, appropriately scaled.  (Note that, in our normalization,
  standard Gaussian density corresponds to $s=2$.)

  As such, the results of this paper include statements about Gaussian
  measure on Euclidean space, similar to those obtained in
  \cite{gkl2010, gkl2015}.  It is well known that \eqref{LSI} is true
  for Gaussian measures \cite{gross75}, with constant $c =
  \frac{1}{2}$.  
\end{example}

\subsection{The Heisenberg group}

\begin{example}\label{heisenberg-ex}
  The simplest nontrivial example of a stratified Lie group is the
  3-dimensional real \defword{Heisenberg group} $G = \mathbb{H}^3$, which we may
  realize as $\mathbb{R}^3$ equipped with the group operation
  \begin{equation}\label{heisenberg-group-op}
    (x_1, x_2, x_3)(x_1', x_2', x_3') = \left(x_1 + x_1', x_2 + x_2',
    x_3 + x_3' + \frac{1}{2}(x_1 x_2' - x_2 x_1')\right).
  \end{equation}
  If we let $\xi_i = \pp{x_i} \in \mathfrak{g} = T_e G$ for $i=1,2,3$,
  the Lie bracket is given by
  \begin{equation*}
    [\xi_1, \xi_2] = \xi_3, \qquad [\xi_1, \xi_3] = [\xi_2, \xi_3] = 0
  \end{equation*}
  so we have the decomposition $\mathfrak{g} = V_1 \oplus V_2$ where
  $V_1 = \spanop\{\xi_1, \xi_2\}$, $V_2 = \spanop\{\xi_3\}$.  Thus the
  Heisenberg group is stratified of step 2.  A natural inner product
  $\langle \cdot, \cdot \rangle$ on $V_1$ is given by taking $\xi_1,
  \xi_2$ to be orthonormal.

  The corresponding
  left-invariant vector fields are given by
  \begin{equation*}
    \widetilde{\xi_1} = \pp{x_1} - \frac{1}{2} x_2 \pp{x_3}, \quad 
    \widetilde{\xi_2} = \pp{x_2} + \frac{1}{2} x_1 \pp{x_3}, \quad 
    \widetilde{\xi_3} = \pp{x_3}.
  \end{equation*}

  The Heisenberg group $\mathbb{H}^3$ was the first nontrivial
  stratified Lie group that was shown to satisfy the logarithmic
  Sobolev inequality \eqref{LSI}.  This statement can be found in
  \cite{bbbc-jfa} and follows from heat semigroup
  gradient bounds previously established in \cite{li-jfa}, via a
  variant of a standard
  $\Gamma_2$-calculus argument from
  \cite{bakry-emery-short} or \cite[pp.~69--70]{bakry-sobolev}.  A key
  ingredient is sharp upper and lower heat kernel estimates, obtained
  in \cite{li-heatkernel}.  As such, our Theorem \ref{main-combined}
  implies that 
  \eqref{sLSI} and \eqref{sHC} are satisfied by $\mathbb{H}^3$ as
  well.

  The Heisenberg group construction immediately generalizes to the
  family of \defword{Heisenberg--Weyl groups} $\mathbb{H}^{2n+1}$, which is
  realized as $\mathbb{R}^{2n+1}$ with a group operation defined again
  by \eqref{heisenberg-group-op}, where now we take $x_1, x_2 \in
  \mathbb{R}^n$.

  The Heisenberg and Heisenberg--Weyl groups are examples of H-type
  groups, which we discuss next.
\end{example}

\subsection{H-type groups}

\begin{example}\label{h-type-example}
  Suppose that $G$ is a (real) stratified Lie group of step $2$.  Let
  the inner product $\langle \cdot, \cdot \rangle$ on $V_1$ be
  extended to an inner product on all of $\mathfrak{g} = V_1 \oplus
  V_2$, still denoted by $\langle \cdot, \cdot \rangle$, for which the
  decomposition $\mathfrak{g} = V_1 \oplus V_2$ is orthogonal.  For
  each $z \in V_2$, define the linear map $J_z : V_1 \to V_1$ by
  $\langle J_z v, w \rangle = \langle z, [v,w] \rangle$.  We say that
  $(G, \langle \cdot, \cdot \rangle)$ is \defword{H-type} if, for each
  $z \in V_2$ with $\langle z,z \rangle = 1$, the map $J_z$ is a
  partial isometry.  H-type groups were introduced in \cite{kaplan80};
  see \cite[Chapter 18]{blu-book} for more background on these groups.
  The Heisenberg and Heisenberg--Weyl groups discussed in Example
  \ref{heisenberg-ex} are H-type (indeed, the H stands for
  Heisenberg).
\end{example}

  H-type groups satisfy the same type of heat semigroup gradient
  bounds as the Heisenberg group $\mathbb{H}^3$.  This was shown
  independently in \cite{eldredge-gradient,li-gradient-h-type}; for
  the required heat kernel estimates, see
  \cite{eldredge-precise-estimates,li-heat-h-type}.  Thus, such groups
  satisfy \eqref{LSI} as well, by the same general argument given in
  \cite{bbbc-jfa}.  As we noted in Corollary \ref{H-type-cor}, our
  Theorem \ref{main-combined} then implies that \eqref{sLSI} and
  \eqref{sHC} are also true in H-type groups.  We do not know of any
  further examples of stratified Lie groups where \eqref{LSI} has been
  proved.
  
  \subsection{Complex stratified Lie groups}

\begin{example}\label{complex-example}
  Suppose that $G$ is a stratified Lie group which is also a complex
  Lie group, so that the Lie algebra $\mathfrak{g}$ admits a complex
  structure $J : \mathfrak{g} \to \mathfrak{g}$ satisfying $[Jv, w] =
  J[v,w]$.  Then $\mathfrak{g}$ is a complex vector space and the
  subspaces $V_i$ in the decomposition \eqref{decomp} are complex
  vector spaces as well.  The complex structure on $\mathfrak{g}$
  induces a complex manifold structure on $G$ for which the
  exponential map is holomorphic.  In this setting, it is natural to
  ask that the inner product $\langle \cdot, \cdot \rangle$ on $V_1$
  be compatible with the complex structure, by being Hermitian:
  $\langle Jv, w \rangle = -\langle v, Jw \rangle$.  We call such $G$
  a \defword{complex stratified Lie group}.  A simple example is the
  complex Heisenberg group $\mathbb{H}^3_{\mathbb{C}}$, or the complex
  Heisenberg--Weyl groups $\mathbb{H}^{2n+1}_{\mathbb{C}}$.
\end{example}

\begin{lemma}\label{holomorphic-lsh}
  Suppose $G$ is a complex stratified Lie group.  Let $f : G \to
  \mathbb{C}$ be holomorphic.  Then for any $\epsilon > 0$, the
  function $g = \sqrt{|f|^2+\epsilon}$ is LSH.
\end{lemma}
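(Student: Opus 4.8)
The plan is to check the definition directly: $g>0$ and $g\in C^2(G)$ are immediate, since $F := |f|^2+\epsilon = f\bar f + \epsilon$ is a strictly positive $C^\infty$ (indeed real-analytic) function, so $g=\sqrt F$ is $C^\infty$ and positive; and since $\log g = \tfrac12\log F$, it remains only to prove $\Delta\log F\ge 0$. As in the computation in the proof of Lemma \ref{LSH-ineq-lemma}, the chain and product rules give $\Delta\log F = \Delta F/F - |\nabla F|^2/F^2 = F^{-2}\big(F\,\Delta F - |\nabla F|^2\big)$, so the goal is to show $F\,\Delta F \ge |\nabla F|^2$ pointwise. To compute the two sides I would first exploit the complex structure to pick a good basis of $V_1$: since $\langle\cdot,\cdot\rangle$ is Hermitian, $J$ is orthogonal with $J^2=-I$ on $V_1$, so $\dim_{\mathbb R}V_1 = 2\ell$ and there is an orthonormal basis $\xi_1,J\xi_1,\dots,\xi_\ell,J\xi_\ell$ of $V_1$; using this basis (legitimate since $\Delta$ and $\nabla$ do not depend on the choice) we have $\Delta = \sum_{k=1}^\ell(\widetilde{\xi_k}^2+\widetilde{J\xi_k}^2)$ and $|\nabla F|^2 = \sum_{k=1}^\ell\big((\widetilde{\xi_k}F)^2+(\widetilde{J\xi_k}F)^2\big)$.

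Next I would record the two consequences of holomorphicity that drive everything. Because left translations on $G$ are holomorphic, the almost complex structure on $G$ is the left-translate of $J$, so $\widetilde{J\xi}$ is that structure applied to $\widetilde{\xi}$; holomorphy of $f$ ($df\circ\mathcal J = i\,df$) then gives $\widetilde{J\xi}f = i\,\widetilde{\xi}f$ for every $\xi\in V_1$. Also, since $G$ is a complex Lie group, $[J\xi_k,\xi_k]=J[\xi_k,\xi_k]=0$, so $\widetilde{\xi_k}$ and $\widetilde{J\xi_k}$ commute. Combining these, $\widetilde{J\xi_k}^2 f = i\,\widetilde{J\xi_k}\widetilde{\xi_k}f = i\,\widetilde{\xi_k}\widetilde{J\xi_k}f = i\,\widetilde{\xi_k}(i\,\widetilde{\xi_k}f) = -\widetilde{\xi_k}^2 f$, whence $(\widetilde{\xi_k}^2+\widetilde{J\xi_k}^2)f = 0$ for each $k$ (in particular $\Delta f = 0$), and $|\widetilde{J\xi_k}f| = |\widetilde{\xi_k}f|$.

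Then I would compute $\Delta F$ and $|\nabla F|^2$ pair by pair, using $\widetilde{\xi_i}\bar f = \overline{\widetilde{\xi_i}f}$. For the second-order part, $\widetilde{\xi_i}^2(f\bar f) = (\widetilde{\xi_i}^2 f)\bar f + 2|\widetilde{\xi_i}f|^2 + f\overline{\widetilde{\xi_i}^2 f}$; summing the $\{\xi_k,J\xi_k\}$ pair, the terms with $(\widetilde{\xi_k}^2+\widetilde{J\xi_k}^2)f$ cancel and we get $(\widetilde{\xi_k}^2+\widetilde{J\xi_k}^2)F = 2(|\widetilde{\xi_k}f|^2+|\widetilde{J\xi_k}f|^2) = 4|\widetilde{\xi_k}f|^2$, so $\Delta F = 4\sum_k|\widetilde{\xi_k}f|^2$. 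For the first-order part, $\widetilde{\xi_i}F = 2\Re(\bar f\,\widetilde{\xi_i}f)$, so writing $w_k := \bar f\,\widetilde{\xi_k}f$ and noting $\bar f\,\widetilde{J\xi_k}f = i w_k$, we get $(\widetilde{\xi_k}F)^2+(\widetilde{J\xi_k}F)^2 = 4\big[(\Re w_k)^2+(\Re(i w_k))^2\big] = 4\big[(\Re w_k)^2+(\Im w_k)^2\big] = 4|w_k|^2 = 4|f|^2|\widetilde{\xi_k}f|^2$, so $|\nabla F|^2 = 4|f|^2\sum_k|\widetilde{\xi_k}f|^2$. Therefore
$$\Delta\log F = \frac{F\,\Delta F - |\nabla F|^2}{F^2} = \frac{4\big(F-|f|^2\big)\sum_k|\widetilde{\xi_k}f|^2}{F^2} = \frac{4\epsilon\sum_k|\widetilde{\xi_k}f|^2}{F^2}\ \ge\ 0,$$
so $\Delta\log g = \tfrac12\Delta\log F\ge 0$, and $g$ is LSH.

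I expect the only real subtlety to be the bookkeeping with the complex structure — specifically, recognizing that the basis vector fields must be grouped into $J$-conjugate pairs $\{\widetilde{\xi_k},\widetilde{J\xi_k}\}$ and that within a pair $\widetilde{J\xi_k}f = i\,\widetilde{\xi_k}f$ converts $(\Re w_k)^2+(\Re(i w_k))^2$ into the full modulus $|w_k|^2$. A naive Cauchy–Schwarz estimate $(\widetilde{\xi_i}F)^2 = 4(\Re(\bar f\,\widetilde{\xi_i}f))^2 \le 4|f|^2|\widetilde{\xi_i}f|^2$, summed over all of $V_1$, loses a factor of two relative to $\Delta F$ and fails to close the inequality; one genuinely needs the holomorphic cancellation. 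Everything else is a routine chain-and-product-rule computation, and the role of $\epsilon>0$ is purely to make $g$ a $C^2$ function (and to supply the strict term $4\epsilon\sum_k|\widetilde{\xi_k}f|^2\ge 0$); the formal case $\epsilon=0$ would give $\Delta\log|f| = 0$ away from the zero set of $f$.
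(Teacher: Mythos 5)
Your proof is correct, but it takes a genuinely different route from the paper's. The paper argues softly and locally: on the dense open set $\{f \ne 0\}$ it composes $f$ with a local branch of the complex logarithm to see that $\log|f|$ is harmonic (invoking the fact, cited from the earlier paper with Gross and Saloff-Coste, that real parts of holomorphic functions are $\Delta$-harmonic when the metric is Hermitian), then upgrades to $\Delta \log g \ge 0$ there via the closure of LSH under powers and sums (Proposition \ref{lsh-properties}), and finally extends to all of $G$ by density of $\{f\ne 0\}$ and continuity of $\Delta \log g$. You instead do a single global computation: choosing an orthonormal basis of $V_1$ in $J$-conjugate pairs, using the Cauchy--Riemann relation $\widetilde{J\xi}f = i\,\widetilde{\xi}f$ and the commutation $[\widetilde{J\xi_k},\widetilde{\xi_k}]=0$, you obtain the exact identities $\Delta F = 4\sum_k|\widetilde{\xi_k}f|^2$ and $|\nabla F|^2 = 4|f|^2\sum_k|\widetilde{\xi_k}f|^2$ for $F=|f|^2+\epsilon$, whence $\Delta\log F = 4\epsilon\sum_k|\widetilde{\xi_k}f|^2/F^2 \ge 0$. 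What your approach buys is a self-contained, quantitative argument that needs no branch of logarithm, no density step, and essentially re-proves the harmonicity input along the way; it also makes transparent exactly where positivity of $\epsilon$ enters and why a naive Cauchy--Schwarz bound on $|\nabla F|^2$ cannot work. What the paper's approach buys is brevity and reuse of already-established structural facts, at the cost of leaning on an external reference and a continuity/density extension. Both establish the lemma as stated.
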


We cannot say that $|f|$ itself is LSH by our definition, because
$|f|$ need not be either $C^2$ nor strictly positive, but it is weakly
LSH in the sense of Section \ref{weak-lsh}; see Proposition
\ref{holomorphic-wlsh}.

\begin{proof}
  Let $x \in G$ and suppose for the moment that $f(x) \ne 0$.  By
  continuity, there is a disk $D \subset \mathbb{C} \setminus \{0\}$
  and an open neighborhood $U$ of $x$ such that $f(U) \subset D$.  Let
  $L(z)$ be a branch of the complex logarithm which is holomorphic on
  $D$.  Then $L \circ f$ is holomorphic on $U$.  We are assuming that
  the inner product on $V_1$ is Hermitian, so the real and imaginary
  parts of any holomorphic function are harmonic with respect to the
  sub-Laplacian $\Delta$ (see \cite{eldredge-gross-saloff-coste-dila}
  for further details).  Thus $\Delta \log |f| = \Delta \Re L \circ
  f = 0$ on $U$.  It follows that $\Delta \log g \ge 0$ on $U$ (see
  Proposition \ref{lsh-properties} items \ref{lsh-power} and
  \ref{lsh-sum}).  So we have shown $\Delta \log g \ge 0$ on
  $\{f \ne 0\}$.
  But since $f$ is holomorphic, $\{f \ne 0\}$ is dense in $G$
  (unless $f \equiv 0$ in which case the statement is trivial).  Since
  $g$ is strictly positive and $C^\infty$, $\Delta \log g$ is
  continuous.  Thus we have $\Delta \log g \ge 0$ everywhere.
\end{proof}

\begin{corollary}
  Let $G$ be a complex stratified Lie group satisfying \eqref{sHC}.
  Then \eqref{sHC} also holds for all holomorphic $f \in L^q(\rho_s)$.
\end{corollary}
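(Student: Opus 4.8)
The plan is to reduce the holomorphic case to the LSH case already handled by \eqref{sHC}, via an approximation argument using Lemma \ref{holomorphic-lsh}. Let $f : G \to \mathbb{C}$ be holomorphic with $f \in L^q(\rho_s)$, and fix $0 < p \le q < \infty$ and $t \ge t_J(p,q)$. For $\epsilon > 0$, set $g_\epsilon = \sqrt{|f|^2 + \epsilon}$. By Lemma \ref{holomorphic-lsh}, each $g_\epsilon$ is LSH. Moreover $|f| \le g_\epsilon \le |f| + \sqrt{\epsilon}$ pointwise, so $g_\epsilon \in L^q(\rho_s)$ (since $\rho_s\,dm$ is a probability measure, the constant $\sqrt{\epsilon}$ is in every $L^q$), hence $g_\epsilon \in LSH \cap L^q(\rho_s)$ and \eqref{sHC} applies to it:
\begin{equation*}
  \|e^{-tE} g_\epsilon\|_{L^q} \le M(p,q)\,\|g_\epsilon\|_{L^p}.
\end{equation*}

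Next I would let $\epsilon \to 0$. On the right-hand side, $g_\epsilon \downarrow |f|$ pointwise as $\epsilon \to 0$, so by dominated convergence (dominated, say, by $g_1 \in L^p$, using $p \le q$ and finiteness of the measure) $\|g_\epsilon\|_{L^p} \to \| |f| \|_{L^p} = \|f\|_{L^p}$. On the left-hand side, note $e^{-tE} g_\epsilon = g_\epsilon \circ \delta_{e^{-t}} = \sqrt{|f \circ \delta_{e^{-t}}|^2 + \epsilon} \downarrow |f \circ \delta_{e^{-t}}| = |e^{-tE} f|$ pointwise; since the functions are nonnegative and decreasing in $\epsilon$, monotone (or dominated) convergence gives $\|e^{-tE} g_\epsilon\|_{L^q} \to \|\,|e^{-tE}f|\,\|_{L^q} = \|e^{-tE} f\|_{L^q}$, where I use that $\|h\|_{L^q} = \|\,|h|\,\|_{L^q}$ for complex-valued $h$. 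Passing to the limit in the displayed inequality yields $\|e^{-tE} f\|_{L^q} \le M(p,q)\|f\|_{L^p}$, which is exactly \eqref{sHC} for the holomorphic function $f$.

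The only mild subtlety is bookkeeping the convergences as $\epsilon \to 0$: one should check that $\|g_\epsilon\|_{L^p}$ is finite for small $\epsilon$ even when $p < q$ — but this is immediate since $g_\epsilon \le |f| + \sqrt{\epsilon}$, $|f| \in L^q \subseteq L^p$ (finite measure), and constants are in $L^p$. There is no real obstacle here; the content is entirely carried by Lemma \ref{holomorphic-lsh} and the hypothesis that \eqref{sHC} already holds on $G$. I would write the argument in a few lines, being careful only to state clearly that the $L^q$ and $L^p$ norms of a complex function agree with those of its modulus, and that the monotone/dominated convergence applications are justified by the pointwise bound $|f| \le g_\epsilon \le |f| + \sqrt{\epsilon}$.
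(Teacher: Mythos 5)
Your proof is correct and follows exactly the paper's approach: apply \eqref{sHC} to $\sqrt{|f|^2+\epsilon}$, which is LSH by Lemma \ref{holomorphic-lsh} and lies in $L^q(\rho_s)$, then let $\epsilon \downarrow 0$ using dominated convergence. The paper states this in one line; you have simply supplied the routine convergence details.
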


\begin{proof}
  Apply \eqref{sHC} to $\sqrt{|f|^2+\epsilon}$, and let $\epsilon
  \downarrow 0$ using Lemma \ref{holomorphic-lsh} and dominated convergence.
\end{proof}

In particular, by Theorem \ref{main-combined}, this holds whenever $G$
satisfies \eqref{LSI}.  This implication was one of the main results
of \cite{eldredge-gross-saloff-coste-dila}, which also gave a density
argument for holomorphic $L^p$ that can be used to show that
\eqref{sHC} also holds for holomorphic $f \in L^p(\rho_s)$.  See the
related discussion in Remark \ref{why-Lq}.  Unfortunately, we do not
know of any similar density results for LSH functions in the real
case.

For a complex stratified Lie group $G$, the vector field $E$ has an
additional significance: as shown in
\cite{eldredge-gross-saloff-coste-dila}, it is the holomorphic
projection of the Ornstein--Uhlenbeck operator $A$.  In the special
case of $\mathbb{C}^n$ with the Gaussian heat kernel, if $f$ is
holomorphic then we actually have $Af = Ef$ because the Laplacian term
vanishes.  For more general complex stratified groups, this is no
longer true because $Af$ may fail to be holomorphic, but its
$L^2(\rho_s)$ projection onto the holomorphic functions equals $Ef$.

  There is not much overlap between the complex and H-type Lie groups;
  we showed in \cite{eldredge-complex-h-type} that the only complex
  Lie groups which are also H-type are the complex Heisenberg--Weyl
  groups $\mathbb{H}^{2n+1}_{\mathbb{C}}$.  As such, these are the
  only complex stratified Lie groups for which \eqref{LSI} is
  currently known to hold.

\section{Convolution and approximation}\label{sec-convolution}

\begin{lemma}\label{convolution-Lp+}
  If $f \in L^{p+}(\rho_s)$, $p \ge 1$, and $\varphi \in C_c(G)$ then
  $\varphi \ast f \in L^{p+}(\rho_s)$.
\end{lemma}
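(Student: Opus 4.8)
The plan is to reduce the problem to an estimate relating $\rho_s(xy^{-1})$ to $\rho_t(x)$ for a slightly larger time $t$, so that convolution against a compactly supported $\varphi$ only costs us a small amount of time in the heat kernel, which in turn costs only a small amount of integrability. Concretely, suppose $f \in L^q(\rho_s)$ for some $q > p$; I want to show $\varphi \ast f \in L^r(\rho_s)$ for some $r > p$ (one may as well take $r$ close to $p$, or even $r = q$ after shrinking $q$, whichever is cleaner). Write $K = \operatorname{supp}\varphi \subset B(e,R)$ for some $R$, and $\|\varphi\|_\infty < \infty$.

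First I would set up the pointwise bound. For any $x$,
\begin{equation*}
  |(\varphi \ast f)(x)| = \left| \int_G \varphi(y) f(y^{-1}x)\,dy \right| \le \|\varphi\|_\infty \int_{K} |f(y^{-1}x)|\,dy.
\end{equation*}
Now apply Hölder in the inner integral (with respect to the finite measure $m$ restricted to $K$), with exponents $q$ and $q' = q/(q-1)$:
\begin{equation*}
  \int_K |f(y^{-1}x)|\,dy \le m(K)^{1/q'} \left( \int_K |f(y^{-1}x)|^q\,dy \right)^{1/q}.
\end{equation*}
So it suffices to show that $x \mapsto \int_K |f(y^{-1}x)|^q\,dy$ lies in $L^{r/q}(\rho_s)$ for some $r > p$; equivalently, after raising to the power $1/q$, that $\left(\int_K |f(y^{-1}x)|^q\,dy\right)^{1/q} \in L^r(\rho_s)$.

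The key step is the time-shift estimate. By Lemma \ref{heat-kernel-left} (applied with the roles of the arguments suitably arranged, using $d(e,y)\le R$ for $y \in K$, and using Lemma \ref{heat-kernel-inverse} and \eqref{d-inverse} to handle $y^{-1}$ versus $y$), for any $t > s$ there is a constant $C = C(R,s,t)$ with $\rho_s(x) \le C\,\rho_t(y^{-1}x)$ uniformly for $y \in K$; equivalently $\rho_s(y x) \le C \rho_t(x)$ type bounds after the change of variable. Using this and Tonelli,
\begin{align*}
  \int_G \left( \int_K |f(y^{-1}x)|^q\,dy \right)^{r/q} \rho_s(x)\,dm(x)
  &\le \int_G \left( \int_K |f(y^{-1}x)|^q\,dy \right)^{r/q} C\,\rho_t(y^{-1}x)\,dm(x),
\end{align*}
but this last step needs care since $\rho_t(y^{-1}x)$ depends on $y$ as well; the clean route is instead to first bound $\rho_s(x) \le C \sup_{y\in K}\rho_t(y^{-1}x)$ is not quite pointwise-uniform, so better: choose $t$ with $s < t$, and pick the constant so that $\rho_s(x) \le C \rho_t(y^{-1}x)$ for \emph{every} $y \in K$ (this is exactly what Lemma \ref{heat-kernel-left} gives, with $r=R$). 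Then
\begin{equation*}
  \int_K |f(y^{-1}x)|^q\,dy\cdot\rho_s(x) \le C \int_K |f(y^{-1}x)|^q \rho_t(y^{-1}x)\,dy,
\end{equation*}
and integrating in $x$, applying Tonelli, and substituting $z = y^{-1}x$ (with $dm(x) = dm(z)$ by left-invariance of Haar measure) gives
\begin{equation*}
  \int_G \int_K |f(y^{-1}x)|^q\,dy\,\rho_s(x)\,dm(x) \le C\, m(K) \int_G |f(z)|^q \rho_t(z)\,dm(z) = C\, m(K)\, \|f\|_{L^q(\rho_t)}^q.
\end{equation*}
This handles the case $r = q$ provided $\|f\|_{L^q(\rho_t)} < \infty$.

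The remaining obstacle — and I expect it to be the main one — is that we only assumed $f \in L^q(\rho_s)$, not $f \in L^q(\rho_t)$ for a larger time $t$; passing from one to the other is exactly where integrability degrades. To fix this, I would interpolate: since $f \in L^q(\rho_s)$ with $q > p$, pick $q_0$ with $p < q_0 < q$, and use Lemma \ref{hk-ratio-Lp} to control $\rho_t/\rho_s$ in a suitable $L^a(\rho_s)$ space for $t$ slightly bigger than $s$ (this is possible because the constraint $t_1 < \frac{a}{a-1}s$ can be met by taking $t$ close enough to $s$). Then Hölder gives $\int |f|^{q_0}\rho_t\,dm = \int |f|^{q_0}(\rho_t/\rho_s)\rho_s\,dm \le \||f|^{q_0}\|_{L^b(\rho_s)}\|\rho_t/\rho_s\|_{L^a(\rho_s)}$ with $\frac1a+\frac1b=1$; choosing $a$ large (so $b q_0$ is just slightly more than $q_0$, hence $\le q$ by taking $q_0$ not too close to $q$ and $a$ large) makes the right-hand side finite. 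So $f \in L^{q_0}(\rho_t)$ for some $t > s$, and rerunning the argument above with $q_0$ in place of $q$ shows $\varphi \ast f \in L^{q_0}(\rho_s) \subset L^{p+}(\rho_s)$ since $q_0 > p$. The only bookkeeping is to check all the exponent inequalities are simultaneously satisfiable, which they are because each constraint only requires $t$ close to $s$ and $a$ large; I would carry out that routine juggling explicitly in the writeup.
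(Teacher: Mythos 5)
Your proposal is correct and follows essentially the same route as the paper: both rest on Lemma \ref{heat-kernel-left} to trade the compactly supported translation for a slightly larger heat-kernel time $t>s$, and on Lemma \ref{hk-ratio-Lp} to absorb the resulting $\rho_t/\rho_s$ factor using the slack in the exponent. The only cosmetic difference is that you apply H\"older on the inner integral over $\operatorname{supp}\varphi$ where the paper uses Jensen's inequality with respect to the probability measure $\varphi\,dm$, and you perform the $\rho_t/\rho_s$ H\"older step before rather than after the convolution estimate.
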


\begin{proof}
  By considering positive and negative parts, we can assume without
  loss of generality that $\varphi \ge 0$.  Also, by rescaling we can
  assume $\int_G \varphi\,dm = 1$.

  Let $q > p$ and $r > 1$ be so small that $f \in L^{qr}(\rho_s)$.
  Let $\frac{1}{r} + \frac{1}{r^*} = 1$ and choose any $t$ with $s < t
  < rs = \frac{r^*}{r^*-1} s$. Then by Lemma \ref{hk-ratio-Lp} we have
  $\frac{\rho_t}{\rho_s} \in L^{r^*}(\rho_s)$.  Next, let $K$ be the
  support of $\varphi$, which is compact, and use Lemma
  \ref{heat-kernel-left} to choose $C$ such that $\sup_{z \in K}
  \rho_s(zy) \le C \rho_t(y)$ for all $y \in G$.

  To start, use Jensen's inequality with the probability measure
  $\varphi\,dm$ to see that
  \begin{equation*}
    |(\varphi \ast f)(x)|^q = \left|\int_G \varphi(z) f(z^{-1}
    x)\,dz\right|^q \le \int_G \varphi(z) |f(z^{-1} x)|^q\,dz
  \end{equation*}
  so that, by Fubini's theorem,
  \begin{align*}
    \|\varphi \ast f\|_{L^q(\rho_s)}^q &\le \int_K \int_G \varphi(z)
    |f(z^{-1} x)|^q \rho_s(x)\,dx\,dz \\
    &= \int_K \int_G \varphi(z) |f(y)|^q \rho_s(zy)\,dy\,dz
  \end{align*}
  making the change of variables $y = z^{-1} x$ and using the
  translation invariance of $m$.  Now for all $z$ in the support $K$
  of $\varphi$, we have $\rho_s(zy) \le C \rho_t(y)$.  Since $\int_G
  \varphi(z)\,dz = 1$ by assumption, we now have
  \begin{align*}
    \|\varphi \ast f\|_{L^q(\rho_s)}^q &\le C \int_G |f(y)|^q
    \rho_t(y)\,dy \\
    &= C\int_G |f(y)|^q \frac{\rho_t(y)}{\rho_s(y)} \rho_s(y)\,dy \\
    &\le C \|f\|_{L^{qr}(\rho_s)}^q \left\|\frac{\rho_t}{\rho_s}\right\|_{L^{r^*}(\rho_s)}
  \end{align*}
  by H\"older's inequality.  By our choices of $t,q,r$, both norms are finite.  
\end{proof}

\begin{lemma}\label{convolution-E}\label{convolution-deriv}
  Suppose $f \in L^{p+}(\rho_s)$, $p \ge 1$, and $\varphi \in C^\infty_c(G)$.
  Then for any $\xi \in \mathfrak{g}$, we have
  $\widetilde{\xi}[\varphi \ast f] \in L^{p+}(\rho_s)$.  As a
  consequence, we also have
  \begin{equation*}
    \left|\nabla[\varphi \ast f]\right|, \, \Delta[\varphi \ast f], \, E[\varphi \ast f] \in L^{p+}(\rho_s).
  \end{equation*}
\end{lemma}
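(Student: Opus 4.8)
The plan is to reduce the claim about $\widetilde{\xi}[\varphi \ast f]$ to Lemma \ref{convolution-Lp+} by exploiting the convolution identity \eqref{convolution-diff}. Recall from \eqref{convolution-diff} that for the \emph{right-}invariant vector field $\widehat{\xi}$ we have $\widehat{\xi}[\varphi \ast f] = (\widehat{\xi}\varphi) \ast f$, and since $\varphi \in C^\infty_c(G)$, the function $\widehat{\xi}\varphi$ is again in $C^\infty_c(G)$; hence by Lemma \ref{convolution-Lp+}, $\widehat{\xi}[\varphi \ast f] \in L^{p+}(\rho_s)$ for every $\xi \in \mathfrak{g}$. This is the easy half. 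The issue is that the first displayed conclusion is phrased in terms of the \emph{left}-invariant fields $\widetilde{\xi_{j,k}}$ (since $\nabla$ and $\Delta$ are built from those), so I need to convert from $\widehat{\xi}$-derivatives to $\widetilde{\xi}$-derivatives.

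First I would invoke Lemma \ref{poly-vector-fields}: it lets us write each coordinate field $\pp{x_{j,k}}$ as a polynomial-coefficient combination of the $\widehat{\xi_{j,k}}$'s, and then each $\widetilde{\xi_{j,k}}$ as a polynomial-coefficient combination of the $\pp{x_{j,k}}$'s — so altogether $\widetilde{\xi}[\varphi \ast f] = \sum_{l} q_l \cdot \widehat{\xi_l}[\varphi \ast f]$ where the $q_l$ are polynomials in the $x_{\alpha,\beta}$ coordinates. By the previous paragraph each $\widehat{\xi_l}[\varphi \ast f] \in L^{p+}(\rho_s)$; by Corollary \ref{polynomial-integrable} each $q_l \in L^{\infty-}(\rho_s)$; and by the Hölder-type remark recorded just after Notation \ref{Lp+} (product of an $L^{p+}$ function with an $L^{\infty-}$ function stays in $L^{p+}$), each term $q_l \cdot \widehat{\xi_l}[\varphi \ast f]$ lies in $L^{p+}(\rho_s)$. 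A finite sum of $L^{p+}$ functions is $L^{p+}$ (this space is a vector space, also noted after Notation \ref{Lp+}), so $\widetilde{\xi}[\varphi \ast f] \in L^{p+}(\rho_s)$, proving the first assertion.

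For the consequences: $\nabla[\varphi \ast f]$ has norm $|\nabla[\varphi \ast f]| = \left(\sum_{i=1}^n (\widetilde{\xi_i}[\varphi \ast f])^2\right)^{1/2} \le \sum_{i=1}^n |\widetilde{\xi_i}[\varphi \ast f]|$, a finite sum of $L^{p+}$ functions, hence $|\nabla[\varphi \ast f]| \in L^{p+}(\rho_s)$. Then $\Delta[\varphi \ast f] = \sum_{i=1}^n \widetilde{\xi_i}^2[\varphi \ast f]$; one way to handle this is to note $\widetilde{\xi_i}^2[\varphi \ast f] = \widetilde{\xi_i}[(\widehat{\xi_i'}\varphi')\ast f]$ for suitable rewriting, but it is cleaner simply to observe that $\Delta[\varphi \ast f] = (\Delta\varphi) \ast f$ by \eqref{convolution-diff} applied twice (since $\widetilde{\xi_i}$ is left-invariant, $\widetilde{\xi_i}^2[\varphi\ast f] = \varphi \ast (\widetilde{\xi_i}^2 f)$ is not directly useful, but $\widetilde{\xi_i}$ commuting with left convolution gives $\widetilde{\xi_i}^2[\varphi \ast f] = \widetilde{\xi_i}[\varphi \ast (\widetilde{\xi_i} f)]$, still involving $f$'s derivatives — so I should instead just apply the already-proved first assertion twice: $\widetilde{\xi_i}[\varphi \ast f] \in L^{p+}$, and to differentiate again I rewrite it, using Lemma \ref{poly-vector-fields} as above, as a polynomial combination of $\widehat{\xi_l}[\varphi \ast f] = (\widehat{\xi_l}\varphi) \ast f$; applying $\widetilde{\xi_i}$ to a polynomial times $(\widehat{\xi_l}\varphi)\ast f$ and using the product rule plus another round of Lemma \ref{poly-vector-fields} and Lemma \ref{convolution-Lp+} keeps everything in $L^{p+}$). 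Finally $E[\varphi \ast f] \in L^{p+}(\rho_s)$ follows from Corollary \ref{E-in-xi-cor}, which writes $E = \sum_{j,k} \tilde{c}_{j,k} \widetilde{\xi_{j,k}}$ with polynomial $\tilde{c}_{j,k}$, combined with the first assertion and Corollary \ref{polynomial-integrable} and the $L^{p+} \cdot L^{\infty-} \subseteq L^{p+}$ property once more.

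The main obstacle is purely bookkeeping: the conversion between left- and right-invariant derivatives in Lemma \ref{poly-vector-fields} introduces polynomial coefficients, and one must be slightly careful that iterating this (to get second derivatives, hence $\Delta$) still produces only polynomial coefficients and finitely many terms, so that Corollary \ref{polynomial-integrable} continues to apply. There is no analytic difficulty — the heat-kernel estimate work is entirely absorbed into Lemma \ref{convolution-Lp+} and Corollary \ref{polynomial-integrable} — so the proof is a short assembly of earlier lemmas.
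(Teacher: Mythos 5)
Your proof is correct and follows essentially the same route as the paper: write $\widetilde{\xi}$ as a polynomial-coefficient combination of the right-invariant fields via Lemma \ref{poly-vector-fields}, push those derivatives onto $\varphi$ with \eqref{convolution-diff}, and conclude with Lemma \ref{convolution-Lp+}, Corollary \ref{polynomial-integrable}, and the $L^{p+}\cdot L^{\infty-}\subseteq L^{p+}$ remark; the paper treats $\Delta$ by the same ``apply the first assertion twice'' device and obtains $E[\varphi\ast f]$ from Corollary \ref{nabla-then-E}, which is just your Corollary \ref{E-in-xi-cor} argument packaged. Your passing claim that $\Delta[\varphi\ast f]=(\Delta\varphi)\ast f$ is false (the sub-Laplacian is built from left-invariant fields, so it passes to the $f$ factor as $\varphi\ast\Delta f$), but you correctly discard that route within the same sentence, so the final argument stands.
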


\begin{proof}
  By writing $\xi$ as a linear combination of an adapted basis
  $\{\xi_{j,k}\}$ and using Lemma \ref{poly-vector-fields}, we can
  write $\widetilde{\xi} = \sum_{j,k} c_{j,k} \widehat{\xi_{j,k}}$ for
  some polynomials $c_{j,k}$.  In particular, by Corollary
  \ref{polynomial-integrable} we have $c_{j,k} \in L^{\infty
    -}(\rho_s)$.  Now using \eqref{convolution-diff}, we
  have
  \begin{equation*}
    \widetilde{\xi}[\varphi \ast f] = \sum_{j,k} c_{j,k}
    \widehat{\xi_{j,k}}[\varphi \ast f] = \sum_{j,k} c_{j,k}
    \cdot [(\widehat{\xi_{j,k}} \varphi) \ast f]
  \end{equation*}
  which is in $L^{p+}(\rho_s)$ by Lemma \ref{convolution-Lp+}.
  
  The desired statement for $\Delta[\varphi \ast f]$ follows by
  applying this twice to get $\widetilde{\xi}^2 [\varphi \ast f] \in
  L^{p+}(\rho_s)$, then summing over an orthonormal basis $\{\xi_i\}$
  for $V_1$.  For $E[\varphi \ast f]$, use Corollary \ref{nabla-then-E}.
\end{proof}

\begin{lemma}\label{convolution-converge}
  Suppose $f \in L^{1+}(\rho_s)$.  There is a sequence of nonnegative
  $\varphi_n \in C^\infty_c(G)$ such that $\varphi_n * f \to f$ almost
  everywhere and in $L^{1+}(\rho_s)$. If moreover $f \in C(G)$ the
  convergence is uniform on compact sets.
\end{lemma}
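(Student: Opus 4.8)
The plan is a mollification argument, adapted to the weighted spaces $L^q(\rho_s)$, followed by passage to a subsequence to get the almost-everywhere statement. First I would fix a standard approximate identity: choose $\psi \in C^\infty_c(G)$ with $\psi \ge 0$, $\operatorname{supp}\psi \subseteq B(e,1)$ and $\int_G \psi\,dm = 1$, and set $\psi_n := n^D\,(\psi\circ\delta_n)$ for $n \ge 1$. By \eqref{d-dilate} and \eqref{m-dilate-integrate}, each $\psi_n$ is nonnegative, lies in $C^\infty_c(G)$, is supported in $B(e,1/n)$, and satisfies $\int_G\psi_n\,dm = 1$. The goal is then to show $\psi_n * f \to f$ in $L^{1+}(\rho_s)$ (and uniformly on compacts when $f$ is continuous), and to extract a subsequence converging also a.e.

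The continuous case is elementary. For $f \in C(G)$, a compact set $K$, and $\epsilon > 0$, a routine compactness argument applied to the continuous function $(y,x)\mapsto |f(y^{-1}x)-f(x)|$ (which vanishes on $\{e\}\times G$) produces a neighborhood $U$ of $e$ with $|f(y^{-1}x)-f(x)| < \epsilon$ for all $x \in K$, $y\in U$. Since $\psi_n\ge 0$ and $\int_G\psi_n\,dm = 1$, once $B(e,1/n)\subseteq U$ we get $|(\psi_n*f)(x) - f(x)| \le \int_G\psi_n(y)|f(y^{-1}x)-f(x)|\,dy < \epsilon$ for all $x\in K$; this is the claimed uniform convergence on compact sets. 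When moreover $f\in C_c(G)$, all the functions $\psi_n * f$ are supported in the fixed compact set $B(e,1)\cdot\operatorname{supp}f$, so the convergence is uniform on $G$, and, $\rho_s\,dm$ being a probability measure, also in $L^r(\rho_s)$ for every finite $r$.

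For general $f \in L^{1+}(\rho_s)$, pick $q > 1$ with $f \in L^q(\rho_s)$ and fix $q'\in(1,q)$. The key ingredient is a uniform estimate $\|\psi_n * g\|_{L^{q'}(\rho_s)} \le C\|g\|_{L^q(\rho_s)}$ for all $g \in L^q(\rho_s)$, with $C$ independent of $n$. This follows exactly as in the proof of Lemma~\ref{convolution-Lp+}: Jensen's inequality for the probability measure $\psi_n\,dm$ gives $|\psi_n * g|^{q'} \le \psi_n * |g|^{q'}$; integrating against $\rho_s\,dm$, applying Tonelli and the substitution $z = y^{-1}x$ reduces matters to $\int_G |g(z)|^{q'}\rho_s(yz)\,dz$ for $y\in B(e,1)$; Lemma~\ref{heat-kernel-left} bounds $\rho_s(yz)\le C_1\rho_t(z)$ for a fixed $t\in(s,\tfrac{q}{q'}s)$; and then H\"older's inequality (exponents $q/q'$ and $q/(q-q')$) together with Lemma~\ref{hk-ratio-Lp}, which applies precisely because $t < \tfrac{q}{q'}s$, bounds this by $C_1C_2\|g\|_{L^q(\rho_s)}^{q'}$ with $C_1,C_2$ depending only on $s,q,q'$. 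Since $C_c(G)$ is dense in $L^q(\rho_s)$, given $\eta>0$ we choose $g\in C_c(G)$ with $\|f-g\|_{L^q(\rho_s)}$ small; then
\begin{align*}
  \|\psi_n*f-f\|_{L^{q'}(\rho_s)}
  &\le \|\psi_n*(f-g)\|_{L^{q'}(\rho_s)} + \|\psi_n*g-g\|_{L^{q'}(\rho_s)} + \|g-f\|_{L^{q'}(\rho_s)} \\
  &\le (C+1)\|f-g\|_{L^q(\rho_s)} + \|\psi_n*g-g\|_{L^{q'}(\rho_s)},
\end{align*}
where we used $\|\cdot\|_{L^{q'}(\rho_s)}\le\|\cdot\|_{L^q(\rho_s)}$ since $\rho_s\,dm$ is a probability measure. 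The middle term tends to $0$ by the previous paragraph, so $\psi_n * f\to f$ in $L^{q'}(\rho_s)$, i.e.\ in $L^{1+}(\rho_s)$. Finally, since $\rho_s>0$, convergence in $L^{q'}(\rho_s)$ yields a subsequence with $\psi_{n_k}*f \to f$ almost everywhere; taking $\varphi_k := \psi_{n_k}$ gives the desired sequence, the uniform convergence on compact sets for $f\in C(G)$ being inherited from the full sequence.

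The step I expect to be the main obstacle is the a.e.\ statement. One is tempted to prove it for the whole sequence via a Hardy--Littlewood maximal function and a Lebesgue differentiation theorem, but after the change of variables the relevant sets are the right translates $B(e,1/n)\cdot x$, which — because the Carnot--Carath\'eodory metric is only left-invariant — are badly distorted relative to metric balls, so the usual ``shrinks nicely'' hypothesis fails. Passing to a subsequence, which is all the lemma requires, sidesteps this difficulty; apart from that, the proof is a routine adaptation of the classical approximate-identity argument, the only real work being the bookkeeping of exponents in the weighted $L^q(\rho_s)$ estimates, which is already done in Lemma~\ref{convolution-Lp+}.
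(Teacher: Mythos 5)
Your proof is correct, and it takes a somewhat different route from the paper's. Both arguments use the same mollifiers $n^D\varphi\circ\delta_n$ and the same key uniform estimate extracted from the proof of Lemma \ref{convolution-Lp+} (supports contained in a fixed compact set, Jensen, the change of variables, Lemma \ref{heat-kernel-left}, H\"older, and Lemma \ref{hk-ratio-Lp}); your bookkeeping of exponents, including the condition $t<\tfrac{q}{q'}s$ needed to invoke Lemma \ref{hk-ratio-Lp} with exponent $q/(q-q')$, is right. The difference is in the logical order. The paper first asserts, as ``standard,'' that $\varphi_n\ast f\to f$ almost everywhere (after passing to a subsequence) and uniformly on compacts for continuous $f$, and then uses the uniform $L^q$ bound to conclude that $\{|\varphi_n\ast f|^{q'}\}$ is uniformly integrable for $1<q'<q$, so that the a.e.\ convergence upgrades to $L^{q'}(\rho_s)$ convergence via the Vitali convergence theorem. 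You instead prove the $L^{q'}(\rho_s)$ convergence first --- reading the uniform estimate as an operator bound $L^q(\rho_s)\to L^{q'}(\rho_s)$ uniform in $n$, and combining it with density of $C_c(G)$ and the elementary uniform convergence for compactly supported continuous functions --- and then obtain the a.e.\ statement by extracting a subsequence (legitimate, since $\rho_s>0$ makes $\rho_s\,dm$-null sets and $m$-null sets coincide). Your version is more self-contained: it does not require justifying a.e.\ convergence of mollifications of a general $L^1_{\mathrm{loc}}$ function on a stratified group, and your observation that the naive maximal-function route is delicate here --- the averaging sets $B(e,1/n)\cdot x$ are right translates, not metric balls for the left-invariant Carnot--Carath\'eodory distance --- is apt. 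The price is a slightly longer argument (the three-term density estimate and the separate treatment of $C_c(G)$), whereas the paper's proof is shorter at the cost of citing the pointwise convergence as standard.
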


\begin{proof}
  Let $\varphi \in C^\infty_c(G)$ be nonnegative with $\int_G
  \varphi\,dm = 1$, and let $\varphi_n = n^D \varphi \circ \delta_n$,
  so that $\varphi_n$ is a sequence of standard mollifiers. It is
  standard that $\varphi_n \ast f \to f$ almost everywhere, after
  passing to a subsequence if necessary, and that the convergence is
  uniform on compact sets if $f$ is continuous.

  Now we note that the $\varphi_n$ are all supported in some compact
  neighborhood $K$ of the identity.  As in the proof of Lemma
  \ref{convolution-Lp+}, if we choose $q, r > 1$ such that $f \in
  L^{qr}(\rho_s)$, then we can choose $C,t$, independent of $n$, such
  that
  \begin{equation*}
    \|\varphi_n \ast f\|_{L^q(\rho_s)}^q \le C \|f\|_{L^{qr}(\rho_s)}^q
    \left\|\frac{\rho_t}{\rho_s}\right\|_{L^{r^*}(\rho_s)}
  \end{equation*}
  In particular, if $1 < q' < q$ then $\{|\varphi_n \ast f|^{q'} : n \ge 1\}$ is uniformly
  integrable with respect to $\rho_s\,dm$, hence $\varphi_n \ast f$ converges in
  $L^{q'}(\rho_s)$.
\end{proof}

\section{Differentiation under the integral sign}

As mentioned in Section \ref{sec-intro}, the strong logarithmic
Sobolev inequality \eqref{sLSI} is essentially an infinitesimal
version of the strong hypercontractivity inequality \eqref{sHC}.
Thus, at a purely formal level, the equivalence between them is
completely natural, and consists mainly of differentiating under the
integral sign.  The difficulty is to verify that this is justified.

The following abstract lemma is a general principle for
differentiating under the integral sign.  We have not seen this
particular form in the literature, so we give the proof.

\begin{lemma}\label{dui}
  Let $(X,\mu)$ be a probability space, and let $F : [0,T] \times X
  \to \mathbb{R}$ be jointly measurable.  Suppose that for each $x \in
  X$, we have $F(\cdot, x) \in C^1([0,T])$, so that $\partial_t F :
  [0,T] \to X$ is also jointly measurable.  Furthermore, suppose that
  the family of functions $\left\{ \partial_t F(t, \cdot) : 0 \le t \le T
  \right\}$ is uniformly integrable on $(X,\mu)$.  Set
  \begin{equation}
    \begin{split}
      v(t) &= \int_X F(t,x) \,\mu(dx) \\
      w(t) &= \int_X \partial_t F(t,x)\,\mu(dx).
    \end{split}
  \end{equation}
  Then $v \in C^1([0,T])$ and $v'(t) = w(t)$ on $[0,T]$.
\end{lemma}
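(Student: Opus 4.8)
The plan is to establish the integral version $v(t) = v(0) + \int_0^t w(\sigma)\,d\sigma$ fiberwise, via the one-variable fundamental theorem of calculus together with Fubini's theorem, and then to upgrade this to $v \in C^1$ by proving that $w$ is continuous.

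The key preliminary observation is that on the probability space $(X,\mu)$ a uniformly integrable family is bounded in $L^1$, so $\Lambda := \sup_{0 \le t \le T} \|\partial_t F(t,\cdot)\|_{L^1(\mu)} < \infty$. (The fiberwise identity below also shows that $F(t,\cdot) \in L^1(\mu)$ for every $t$ as soon as this holds for one $t$, since $|F(t,x)| \le |F(0,x)| + \int_0^T |\partial_\sigma F(\sigma,x)|\,d\sigma$; I take the finiteness of $v$ for granted, as the statement implicitly requires.) For each $x$, since $F(\cdot,x) \in C^1([0,T])$, the fundamental theorem of calculus gives
\[
  F(t,x) - F(0,x) = \int_0^t \partial_\sigma F(\sigma,x)\,d\sigma, \qquad 0 \le t \le T.
\]
The map $(\sigma,x) \mapsto \partial_\sigma F(\sigma,x)$ is jointly measurable by hypothesis and satisfies $\int_0^T \int_X |\partial_\sigma F(\sigma,x)|\,\mu(dx)\,d\sigma \le T\Lambda < \infty$, so Tonelli and Fubini apply; integrating the displayed identity over $X$ and interchanging the order of integration yields $v(t) - v(0) = \int_0^t w(\sigma)\,d\sigma$ for every $t \in [0,T]$.

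Next I would show $w \in C([0,T])$. Given a sequence $t_n \to t$ in $[0,T]$: since $\sigma \mapsto \partial_\sigma F(\sigma,x)$ is continuous for each $x$ (again because $F(\cdot,x)$ is $C^1$), we have $\partial_t F(t_n,\cdot) \to \partial_t F(t,\cdot)$ pointwise on $X$, hence in $\mu$-measure as $\mu(X) < \infty$. The family $\{\partial_t F(t_n,\cdot)\}_n$ is uniformly integrable, being a subfamily of the uniformly integrable family $\{\partial_t F(s,\cdot) : 0 \le s \le T\}$, so the Vitali convergence theorem gives $\partial_t F(t_n,\cdot) \to \partial_t F(t,\cdot)$ in $L^1(\mu)$, i.e.\ $w(t_n) \to w(t)$; thus $w$ is continuous on $[0,T]$.

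Combining the two conclusions, $v(t) = v(0) + \int_0^t w(\sigma)\,d\sigma$ with $w$ continuous, so by the fundamental theorem of calculus $v \in C^1([0,T])$ and $v' = w$, as desired. The only steps requiring genuine care are extracting the uniform $L^1$ bound $\Lambda$ from the uniform integrability hypothesis (needed to legitimize the use of Fubini's theorem) and applying the Vitali convergence theorem in the correct form, namely uniform integrability together with convergence in measure on a finite measure space; everything else is routine manipulation of the one-variable fundamental theorem of calculus.
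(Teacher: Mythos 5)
Your proof is correct and follows essentially the same route as the paper's: extract a uniform $L^1$ bound from uniform integrability, use Fubini together with the fundamental theorem of calculus to get $v(t)-v(0)=\int_0^t w(\sigma)\,d\sigma$, and invoke the Vitali convergence theorem to see that $w$ is continuous, whence $v\in C^1$ with $v'=w$. The only difference is cosmetic (you establish the integral identity before the continuity of $w$, while the paper does the reverse), so there is nothing to add.
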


We use the term ``uniformly integrable'' here in the probabilist's
sense: a family of functions $\mathcal{G}$ is uniformly integrable with
respect to $\mu$ iff
\begin{equation*}\lim_{M \to \infty} \sup_{g \in \mathcal{G}}
  \int_{|g| \ge M} |g| \,d\mu = 0.
\end{equation*}
This is the necessary and sufficient hypothesis for the Vitali
convergence theorem.  In particular, a uniformly integrable family is
bounded in $L^1(\mu)$.  We also recall, for later use, the fact that
if $\sup_{g \in \mathcal{G}} \|g\|_{L^p(\mu)} < \infty$ for some $p>1$
then $\mathcal{G}$ is uniformly integrable.

\begin{proof}[Proof of Lemma \ref{dui}]
  First, by the Vitali convergence theorem, $w$ is continuous on
  $[0,T]$.

  The uniform integrability also implies that $\partial_t F(t, \cdot)$
  is uniformly $L^1$ bounded, so we have $\| \partial_t F(t,
  \cdot)\|_{L^1(\mu)} \le K$ for some finite $K$.  Thus for
  any $0 \le \tau \le T$ we have
  \begin{equation*}
    \int_0^\tau \int_X |\partial_t F(t, x)|\,\mu(dx)\,dt \le KT < \infty. 
  \end{equation*}
  So by Fubini's theorem and the first fundamental theorem of calculus, we have
  \begin{align*}
    \int_0^\tau w(t)\,dt &= \int_0^\tau \int_X \partial_t
    F(t,x)\,\mu(dx)\,dt \\
    &= \int_X \int_0^\tau \partial_t F(t,x)\,dt\,\mu(dx) \\
    &= \int_X (F(\tau,x) - F(0,x))\,\mu(dx) \\
    &= v(\tau) - v(0).
  \end{align*}
  Hence by the second fundamental theorem of calculus, $v$
  is differentiable and $v' = w$.  
\end{proof}

The following lemma, which follows from the heat kernel bounds in
Section \ref{subsec:heatkernel}, will be
convenient in verifying the uniform integrability hypothesis for our
applications.

\begin{lemma}\label{dilate-ui}
  Suppose $f \in L^{p+}(\rho_s)$.  Then for some $q > p$ and any $T <
  \infty$ we have
  \begin{equation*}
    \sup_{0 \le t \le T} \|e^{-tE} f\|_{L^q(\rho_s)} < \infty.
  \end{equation*}
  In particular, if $f \in L^{1+}(\rho_s)$, then $\{e^{-tE} f : 0 \le t \le
  T\}$ is uniformly integrable with respect to $\rho_s \,dm$.
\end{lemma}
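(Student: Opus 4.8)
The plan is to turn the dilated function $e^{-tE}f = f\circ\delta_{e^{-t}}$ back into $f$ by a change of variables, at the cost of moving the heat kernel's time parameter, and then estimate uniformly in $t$ using Hölder's inequality together with Lemma \ref{hk-ratio-Lp}.

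First I would record the identity
\[
  \int_G (g\circ\delta_{e^{-t}})\,\rho_s\,dm = \int_G g\,\rho_{s e^{-2t}}\,dm, \qquad g \in L^1_{\loc},\ g \ge 0,\ t \ge 0,
\]
obtained by substituting $u = \delta_{e^{-t}}(x)$: the Jacobian factor $e^{tD}$ coming from the measure scaling \eqref{m-dilate} exactly cancels the factor $e^{-tD}$ produced by the heat kernel scaling \eqref{rho-dilate} with $\lambda = e^t$. Applying this with $g = |f|^q$ gives, for every $q$,
\[
  \|e^{-tE}f\|_{L^q(\rho_s)}^q = \int_G |f|^q\,\rho_{s e^{-2t}}\,dm = \int_G |f|^q\,\frac{\rho_{s e^{-2t}}}{\rho_s}\,\rho_s\,dm.
\]
Now fix $q' > p$ with $f \in L^{q'}(\rho_s)$, choose any $q$ with $p < q < q'$, and set $a = q'/q > 1$ with conjugate exponent $a^* = a/(a-1) > 1$. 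For $t\in[0,T]$ the parameter $s e^{-2t}$ lies in the compact interval $[t_0,t_1] := [s e^{-2T},\, s]$, where $0 < t_0 \le t_1 = s$; since $t_1 = s < \frac{a^*}{a^*-1}s$ trivially, Lemma \ref{hk-ratio-Lp} applies and gives $\Phi := \sup_{\tau\in[t_0,t_1]}\rho_\tau/\rho_s \in L^{a^*}(\rho_s)$. Then Hölder's inequality yields, uniformly in $t\in[0,T]$,
\[
  \|e^{-tE}f\|_{L^q(\rho_s)}^q \le \int_G |f|^q\,\Phi\,\rho_s\,dm \le \|f\|_{L^{qa}(\rho_s)}^q\,\|\Phi\|_{L^{a^*}(\rho_s)} = \|f\|_{L^{q'}(\rho_s)}^q\,\|\Phi\|_{L^{a^*}(\rho_s)} < \infty,
\]
which is the first assertion. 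The ``in particular'' clause then follows immediately: with $p=1$ we obtain some $q>1$ with $\sup_{0\le t\le T}\|e^{-tE}f\|_{L^q(\rho_s)}<\infty$, and a family bounded in $L^q(\rho_s)$ for some $q>1$ is uniformly integrable, as noted in the paragraph preceding Lemma \ref{dui}.

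I do not expect a genuine obstacle here. The only points needing (minor) care are verifying the hypotheses of Lemma \ref{hk-ratio-Lp} — namely that $t_0 = s e^{-2T} > 0$, which uses $T < \infty$, and that $t_1 = s$ stays below the critical threshold, which is automatic — and keeping track of the fact that, once the change of variables is performed, none of the constants in the final bound depend on $t$.
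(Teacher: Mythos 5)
Your proof is correct and follows essentially the same route as the paper's: the scaling identity $\|e^{-tE}f\|_{L^q(\rho_s)}^q = \int_G |f|^q\,\rho_{se^{-2t}}\,dm$ via \eqref{rho-dilate} and \eqref{m-dilate-integrate}, then H\"older against $\sup_{t\in[0,T]}\rho_{se^{-2t}}/\rho_s$, whose integrability is supplied by Lemma \ref{hk-ratio-Lp} with $[t_0,t_1]=[se^{-2T},s]$. The only cosmetic difference is notation ($a,a^*$ versus the paper's $r,r^*$ with $q'=qr$), and your verification of the hypotheses of Lemma \ref{hk-ratio-Lp} and of the uniform-integrability conclusion matches the paper's.
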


\begin{proof}
  Choose $q > p$ and $r > 1$ so small that $f \in L^{qr}(\rho_s)$.
  Let $r^* = \frac{r}{r-1}$.  By Lemma
  \ref{hk-ratio-Lp}, taking $t_0 = s e^{-2T}$ and $t_1 = s$, we have $\sup_{0 \le t \le T} \rho_{se^{-2t}}/\rho_s \in L^{r^*}(\rho_s)$.
  Then for any $t \in [0,T]$ we have
  \begin{align*}
    \|e^{-tE} f\|_{L^q(\rho_s)}^q &= \int_G |f \circ
    \delta_{e^{-t}}|^q \rho_s\,dm \\
    &= \int_G |f|^q \rho_{s e^{-2t}}\,dm && \text{by
      \eqref{rho-dilate} and \eqref{m-dilate-integrate}}\\
    &\le \int_G |f|^q \left(\sup_{0 \le t \le T}
    \frac{\rho_{s e^{-2t}}}{\rho_s}\right) \rho_s\,dm \\
    &\le \|f\|_{L^{qr}(\rho_s)}^q \left\|\sup_{0 \le t \le T}
    \frac{\rho_{s e^{-2t}}}{\rho_s}\right\|_{L^{r^*}(\rho_s)}
  \end{align*}
  which is independent of $t$ and finite.
\end{proof}

\begin{lemma}\label{ui-lemma}
  Suppose $r \in C^1([0,T])$ with $1 \le r(t) \le q$, and suppose $f
  \in W^{1, q+}(\rho_s)$ is positive.  Set $f_t = e^{-tE} f^{r(t)}$.
  Then the functions
  \begin{equation*}
    f_t, \quad f_t \log f_t, \quad |\nabla f_t|, \quad E f_t, \qquad 0
    \le t \le T
  \end{equation*}
  are all uniformly bounded in $L^p(\rho_s)$ norm for some $p > 1$.
  In particular, they are uniformly integrable.
\end{lemma}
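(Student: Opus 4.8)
The plan is to reduce everything to the bounds already established in Section~\ref{sec-convolution} and Section~\ref{subsec:heatkernel}, after first dispatching the troublesome $f_t \log f_t$ term by a standard inequality. Since $f \in W^{1,q+}(\rho_s)$, we may fix $q' > q$ with $f, |\nabla f| \in L^{q'}(\rho_s)$; we will show each of the four families is bounded in $L^{p}(\rho_s)$ for a suitable $p \in (1, q'/q]$, which suffices by the remark following Lemma~\ref{dui} that $L^p$-boundedness for $p>1$ gives uniform integrability. Throughout, $t$ ranges over the compact interval $[0,T]$ and $r(t)$ over the compact interval $[1,q]$, and we will repeatedly use that $\delta_{e^{-t}}$ pushes $\rho_s\,dm$ to $\rho_{se^{-2t}}\,dm$ via \eqref{rho-dilate} and \eqref{m-dilate-integrate}, so that
\begin{equation*}
  \int_G |e^{-tE} g|^p \,\rho_s\,dm = \int_G |g|^p\,\rho_{se^{-2t}}\,dm \le \int_G |g|^p \left(\sup_{0 \le t \le T} \frac{\rho_{se^{-2t}}}{\rho_s}\right)\rho_s\,dm,
\end{equation*}
and that by Lemma~\ref{hk-ratio-Lp} (with $t_0 = se^{-2T}$, $t_1 = s$) the supremum $\Phi := \sup_{0\le t\le T}\rho_{se^{-2t}}/\rho_s$ lies in $L^{r^*}(\rho_s)$ for any $r^* < \infty$ we like, in particular for $r^*$ as large as needed to absorb a H\"older exponent.

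For $f_t = e^{-tE} f^{r(t)}$ itself: pointwise $|f_t|^p = (f^{r(t)} \circ \delta_{e^{-t}})^p \le (1 + f^{q})^p \circ \delta_{e^{-t}}$ using $f^{r(t)} \le 1 + f^q$ (since $r(t) \in [1,q]$ and $f>0$), so after pushing forward and applying H\"older with a large conjugate exponent against $\Phi \in L^{r^*}$, $L^p$-boundedness of $\{f_t\}$ reduces to $f \in L^{pqr}(\rho_s)$ for appropriate $p, r$, which holds by the choice of $q'$. The same pointwise-majorization-then-pushforward-then-H\"older scheme handles $f_t \log f_t$: note $|a \log a| \le C_\epsilon(a^{1-\epsilon} + a^{1+\epsilon})$ for $a > 0$, so $|f_t \log f_t| \le C\,(\,(f^{r(t)})^{1-\epsilon} + (f^{r(t)})^{1+\epsilon}\,) \circ \delta_{e^{-t}} \le C'(1 + f^{(1+\epsilon)q}) \circ \delta_{e^{-t}}$, and again boundedness in $L^p(\rho_s)$ follows provided $f \in L^{p(1+\epsilon)q r}(\rho_s)$, which we arrange by taking $\epsilon, p-1, r-1$ all small enough that this exponent stays below $q'$.

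For the two derivative terms we first compute them. By \eqref{xi-f-dilation}, for any $\xi \in V_j$ we have $\widetilde{\xi}(g \circ \delta_{e^{-t}}) = e^{-tj}(\widetilde{\xi}g)\circ\delta_{e^{-t}}$, so $\widetilde{\xi} f_t = e^{-tj}\,(\widetilde{\xi}(f^{r(t)}))\circ\delta_{e^{-t}} = e^{-tj} r(t)\,(f^{r(t)-1}\widetilde{\xi}f)\circ\delta_{e^{-t}}$; summing squares over an orthonormal basis of $V_1$ gives $|\nabla f_t| = e^{-t} r(t)\,(f^{r(t)-1}|\nabla f|)\circ\delta_{e^{-t}}$, and by the product/chain rule together with \eqref{E-coords}, $E f_t = (E(f^{r(t)}))\circ\delta_{e^{-t}} = r(t)\,(f^{r(t)-1} Ef)\circ\delta_{e^{-t}}$. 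Since $e^{-t}r(t)$ is bounded on $[0,T]$, in both cases the modulus is $\le C\,(f^{r(t)-1}h)\circ\delta_{e^{-t}}$ where $h \in \{|\nabla f|,\ Ef\}$; note $Ef \in L^{q+}(\rho_s)$ by Corollary~\ref{nabla-then-E} since $|\nabla f| \in L^{q+}(\rho_s)$, so we may shrink $q'$ if needed so that $h \in L^{q'}(\rho_s)$ for both choices. Then $f^{r(t)-1}h \le (1 + f^{q-1})h \le (1+f^q)h$, and a H\"older split (say with exponents matched to $f \in L^{aq}(\rho_s)$ and $h \in L^b(\rho_s)$ with $1/a + 1/b = 1/p$, $ap \le q'/q$, $bp \le q'$) together with the pushforward identity and $\Phi \in L^{r^*}$ gives the desired uniform $L^p(\rho_s)$ bound. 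The main obstacle, and the only real bookkeeping in the argument, is juggling the three parameters $\epsilon$, $p-1$, $r-1$ simultaneously so that every exponent appearing on $f$ stays $\le q'$ and the conjugate exponent landing on $\Phi$ stays finite; since $q' > q$ strictly and all constraints are strict at $\epsilon = 0$, $p = 1$, $r = 1$, a small enough joint choice works. Uniform integrability of all four families then follows immediately from uniform $L^p$-boundedness with $p > 1$.
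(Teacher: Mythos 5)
Your argument is correct and is essentially the paper's proof: the paper likewise reduces each of the four families to a $t$-independent pointwise majorant composed with $\delta_{e^{-t}}$ (using $f^{r(t)}\le 1+f^q$, the identity $|\nabla f_t| = r(t)e^{-t}\,e^{-tE}[f^{r(t)-1}|\nabla f|]$, its analogue for $Ef_t$ via Corollary~\ref{nabla-then-E}, and then a uniform bound on $e^{-tE}$ acting on $L^{1+}(\rho_s)$). The only organizational difference is that the paper packages your pushforward-plus-H\"older step as a separate statement (Lemma~\ref{dilate-ui}, itself resting on \eqref{rho-dilate} and Lemma~\ref{hk-ratio-Lp}), which you instead rederive inline; this spares you the explicit exponent bookkeeping you describe at the end, since the lemma can simply be applied to each majorant. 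The one substantive divergence is the $f_t\log f_t$ term: you dominate via $|a\log a|\le C_\epsilon(a^{1-\epsilon}+a^{1+\epsilon})$, which uses only $f\in L^{q+}$ and exploits $r(t)\ge 1$ to kill the logarithmic singularity at $f=0$, whereas the paper dominates by $|(1+f^q)\log f^q|$ and asserts this lies in $L^{1+}$ --- a bound whose constant term $|\log f^q|$ is not obviously integrable for a positive $f$ that decays very rapidly. Your version of that step is the more careful one.
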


We note that the conclusion of this lemma implies $f_t \in
W^{1,1+}(\rho_s)$ for each $0 \le t \le T$.

\begin{proof}
  For $f_t$, note that $1+f^q$ is in $L^{1+}$, so by Lemma
  \ref{dilate-ui} we have that the family $\{e^{-tE}[1+ f^q] : 0 \le t
  \le T\}$ is uniformly bounded in $L^p$ for some $p > 1$.  But
  $f^{r(t)} \le 1 + f^q$ for each $t$, so $f_t \le e^{-tE}[1+f^q]$,
  and $f_t$ is uniformly bounded in $L^p$.

  For $f_t \log f_t$, note that since $f^q \in L^{1+}$, we also have
  $(1+f^q) \log f^q \in L^{1+}$.  Then, since
  \begin{equation*}
    |\log f^{r(t)}| = r(t) |\log f| \le q |\log f| = |\log f^q|
  \end{equation*}
  we have $|f^{r(t)} \log f^{r(t)}| \le |(1+f^q) \log f^q|$.  By the
  same argument as in the previous case, $f_t \log f_t$ is uniformly
  bounded in some $L^p$.

  For $|\nabla f_t|$, note that
  \begin{equation*}
    |\nabla f_t| = r(t) e^{-tE} [f^{r(t)-1}] \left|\nabla [e^{-tE} f]\right| =
    r(t) e^{-t} e^{-tE}\left[f^{r(t)-1} |\nabla f|\right]   
  \end{equation*}
  using \eqref{xi-f-dilation}.  Now $f^{r(t)-1} \le 1+f^{q-1}$, and we
  have $(1+f^{q-1})|\nabla f| \in L^{1+}$ by H\"older's inequality.
  So by Lemma \ref{dilate-ui}, $\left\{e^{-tE} \left[(1+f^{q-1})|\nabla f|\right]\right\}$
  is uniformly bounded in some $L^p$, $p>1$, and the same thus holds for
  $|\nabla f_t|$.

  Since $|\nabla f| \in L^{q+}$, we have $Ef \in L^{q+}$ as well, by
  Lemma \ref{nabla-then-E}.  So a similar argument applies for $E f_t$
  as for $|\nabla f_t|$, noting that
  \begin{equation*}
    E f_t = r(t) e^{-tE} [f^{r(t)-1}] Ee^{-tE} f = r(t) e^{-tE}
    \left[f^{r(t)-1} Ef\right].
  \end{equation*}

\end{proof}

\begin{lemma}\label{key-diff}
  Again suppose $r \in C^1([0,T])$ with $1 \le r(t) \le q$, and suppose $f
  \in C^1(G) \cap W^{1, q+}(\rho_s)$ is positive.  Set $f_t = e^{-tE} f^{r(t)}$.  Let
  \begin{equation}
    \begin{split}\label{key-vw-def}
      v(t) &= \int_G f_t\,\rho_s\,dm = \|e^{-tE}
      f\|_{L^{r(t)}(\rho_s)} \\
      w(t) &= \int_G \partial_t f_t\,\rho_s\,dm = \int_G \left[ -E f_t +
      \frac{r'(t)}{r(t)} f_t \log f_t\right]\,\rho_s\,dm. 
    \end{split}
  \end{equation}
  Then $v \in C^1([0,T])$ and $v'(t) = w(t)$ on $[0,T]$.
\end{lemma}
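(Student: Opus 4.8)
The plan is to apply the differentiation-under-the-integral-sign principle of Lemma \ref{dui}, with probability space $(X,\mu) = (G,\rho_s\,dm)$ and $F(t,x) = f_t(x)$. There are three things to verify: that $F$ is jointly measurable, that $F(\cdot,x) \in C^1([0,T])$ for every $x$, and that the family $\{\partial_t F(t,\cdot) : 0 \le t \le T\}$ is uniformly integrable with respect to $\rho_s\,dm$. Once these hold, Lemma \ref{dui} gives exactly the assertion that $v \in C^1([0,T])$ with $v'(t) = w(t)$, where $w$ is the integral of $\partial_t F$.

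For the regularity in $t$ and the identification of $\partial_t f_t$, I would fix $x \in G$ and introduce the two-variable function $G(t,u) := \bigl(e^{-uE} f^{r(t)}\bigr)(x) = \exp\bigl(r(t)\log f(\delta_{e^{-u}}(x))\bigr)$. Since $u \mapsto \delta_{e^{-u}}(x)$ is smooth, $f \in C^1(G)$ is strictly positive, and $r \in C^1([0,T])$, the function $G$ is jointly $C^1$ in $(t,u)$ with continuous partial derivatives; and $G$ is jointly continuous in $(t,x)$, which gives joint measurability of $F$ (and hence, by the remark in Lemma \ref{dui}, of $\partial_t F$). Because $f_t(x) = G(t,t)$, the chain rule yields $\partial_t f_t = \partial_1 G(t,t) + \partial_2 G(t,t)$, which is continuous in $t$, so $F(\cdot,x) \in C^1([0,T])$. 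Here $\partial_1 G(t,u) = e^{-uE}\bigl[r'(t)\,f^{r(t)}\log f\bigr]$ since $\partial_t f^{r(t)} = r'(t)\,f^{r(t)}\log f$, while $\partial_2 G(t,u) = -E\,e^{-uE} f^{r(t)}$ because the dilation semigroup satisfies $\frac{d}{du} e^{-uE}h = -E e^{-uE} h$ and $E$ commutes with the dilations. Setting $u=t$ and using that $e^{-tE}$ is composition with $\delta_{e^{-t}}$—hence multiplicative and satisfying $e^{-tE}(g\log g) = (e^{-tE}g)\log(e^{-tE}g)$—one gets $\partial_1 G(t,t) = \frac{r'(t)}{r(t)} e^{-tE}\bigl[f^{r(t)}\log f^{r(t)}\bigr] = \frac{r'(t)}{r(t)}\,f_t\log f_t$ and $\partial_2 G(t,t) = -E f_t$, so that $\partial_t f_t = -E f_t + \frac{r'(t)}{r(t)}\,f_t\log f_t$, which is precisely the integrand appearing in \eqref{key-vw-def}.

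It remains to verify uniform integrability of $\{\partial_t f_t : 0 \le t \le T\}$. Since $r \in C^1([0,T])$ with $r \ge 1$, the coefficient $r'(t)/r(t)$ is bounded on $[0,T]$ by some constant $C_0$, so $|\partial_t f_t| \le |E f_t| + C_0\,|f_t\log f_t|$ pointwise. By Lemma \ref{ui-lemma} (whose hypotheses match ours, since $f \in C^1(G) \cap W^{1,q+}(\rho_s)$ is positive and $1 \le r(t) \le q$), the families $\{E f_t : 0 \le t \le T\}$ and $\{f_t\log f_t : 0 \le t \le T\}$ are uniformly bounded in $L^p(\rho_s)$ norm for some $p > 1$; hence $\{\partial_t f_t : 0 \le t \le T\}$ is uniformly bounded in $L^p(\rho_s)$ for that $p > 1$, and as recalled just after Lemma \ref{dui} this implies uniform integrability. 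With all three hypotheses of Lemma \ref{dui} verified, the conclusion $v \in C^1([0,T])$, $v'(t) = w(t)$ follows.

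The genuinely analytic work—using the heat kernel estimates of Section \ref{subsec:heatkernel} to control $E f_t$ and $f_t\log f_t$ uniformly in $t$—has already been extracted in Lemmas \ref{dilate-ui} and \ref{ui-lemma}, so the only real care needed here is the pointwise chain-rule computation of $\partial_t f_t$: correctly accounting for the two sources of $t$-dependence (the exponent $r(t)$ and the dilation parameter) and the sign conventions for $E$ and $e^{-tE}$. I expect this bookkeeping, rather than any estimate, to be the only place where an error could slip in.
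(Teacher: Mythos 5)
Your proposal is correct and follows exactly the paper's argument: apply Lemma \ref{dui} with $F(t,x)=f_t(x)$, obtain the continuity of $\partial_t f_t$ from $f\in C^1(G)$ (and $f>0$, $r\in C^1$), and verify uniform integrability via Lemma \ref{ui-lemma}. The only difference is that you spell out the chain-rule computation of $\partial_t f_t$ in detail, which the paper leaves implicit; your bookkeeping of the two sources of $t$-dependence and the signs is accurate.
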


\begin{proof}
  Differentiate under the integral sign using Lemma \ref{dui}, with
  $F(t,x) = f_t(x)$.  The continuity of $\partial_t f_t$ follows from
  the assumption that $f \in C^1(G)$, and the uniform integrability
  hypothesis is verified by Lemma \ref{ui-lemma}.
\end{proof}

\section{Proofs of the main results}

\subsection{LSI implies sLSI}

\begin{theorem}
  \label{LSI-implies-sLSI}
  In any stratified Lie group $G$, if \eqref{LSI} holds, then
  \eqref{sLSI} holds, with the same constants $c,\beta$.
\end{theorem}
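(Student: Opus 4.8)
The plan is to show that the classical inequality \eqref{LSI}, applied to an arbitrary $f \in LSH \cap W^{1,1+}$, collapses directly into \eqref{sLSI} because the log-subharmonicity hypothesis lets us replace the Dirichlet-form term $\int_G \frac{|\nabla f|^2}{f}\,\rho_s\,dm$ by the dilation term $\frac{2}{s}\int_G Ef\,\rho_s\,dm$ — up to a sign that goes the right way. The key chain of inequalities is
\begin{equation*}
  \int_G \frac{|\nabla f|^2}{f}\,\rho_s\,dm \le \int_G \Delta f\,\rho_s\,dm = \frac{2}{s}\int_G Ef\,\rho_s\,dm,
\end{equation*}
where the first step is Lemma \ref{LSH-ineq-lemma} (i.e. \eqref{LSH-ineq}, which is exactly what LSH means) and the second is Lemma \ref{time-space}. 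Feeding this into \eqref{LSI} with $\frac{cs}{2}$ in front of the Dirichlet term immediately yields
\begin{equation*}
  \int_G f\log f\,\rho_s\,dm \le \frac{cs}{2}\cdot\frac{2}{s}\int_G Ef\,\rho_s\,dm + \|f\|_{L^1}\log\|f\|_{L^1} + \beta\|f\|_{L^1} = c\int_G Ef\,\rho_s\,dm + \|f\|_{L^1}\log\|f\|_{L^1} + \beta\|f\|_{L^1},
\end{equation*}
which is precisely \eqref{sLSI} with the same constants $c,\beta$.

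The steps I would carry out, in order: (1) Fix $f \in LSH \cap W^{1,1+}(\rho_s)$; in particular $f \in C^2(G)$ and $f>0$. (2) Check that \eqref{LSI} is actually applicable to $f$ — it requires $f \in C^1(G)$, $f \ge 0$, which is immediate, but of course the right-hand side of \eqref{LSI} could a priori be $+\infty$, in which case there is nothing to prove; so assume it is finite, which (together with $f \in L^{1+} \subset L^1$) forces $\frac{|\nabla f|^2}{f} \in L^1(\rho_s)$. (3) Verify the hypotheses of Lemma \ref{time-space}: we need $f \in C^2 \cap L^1(\rho_s)$ with $Ef, |\nabla f|, \Delta f \in L^1(\rho_s)$, or alternatively $\Delta f \ge 0$ in place of the $\Delta f \in L^1$ requirement. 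Since $f$ is LSH it is subharmonic (Lemma \ref{LSH-ineq-lemma}), so $\Delta f \ge 0$ and we may use that branch of Lemma \ref{time-space}; moreover $|\nabla f| \in L^{1+}(\rho_s) \subset L^1(\rho_s)$ by hypothesis, and then $Ef \in L^{1+}(\rho_s) \subset L^1(\rho_s)$ by Corollary \ref{nabla-then-E}. (4) Apply \eqref{LSH-ineq} pointwise and integrate against $\rho_s\,dm$, then apply Lemma \ref{time-space}, then substitute into \eqref{LSI} and simplify the constant $\frac{cs}{2}\cdot\frac{2}{s} = c$.

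I do not expect a serious obstacle here — this is the "easy" implication, and the excerpt has already assembled every tool needed. The one point requiring mild care is the bookkeeping around integrability: one must make sure that when the right-hand side of \eqref{LSI} is finite, all three integrals $\int_G \frac{|\nabla f|^2}{f}\,\rho_s\,dm$, $\int_G \Delta f\,\rho_s\,dm$, $\int_G Ef\,\rho_s\,dm$ are finite (and in $[0,\infty]$ in the right order) so that the inequalities chain without running into $\infty - \infty$; the LSH inequality \eqref{LSH-ineq} guarantees $0 \le \frac{|\nabla f|^2}{f} \le \Delta f$ pointwise, so finiteness propagates upward cleanly, and nonnegativity of $\Delta f$ lets Lemma \ref{time-space} convert $\int \Delta f$ into $\frac{2}{s}\int Ef$ without any extra hypothesis. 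If the RHS of \eqref{LSI} is infinite the claim is vacuous. So the proof is essentially a two-line computation once the citations are lined up.
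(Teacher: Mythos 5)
Your proof is correct and follows essentially the same route as the paper: apply \eqref{LSH-ineq} pointwise, integrate, convert $\int_G \Delta f\,\rho_s\,dm$ to $\tfrac{2}{s}\int_G Ef\,\rho_s\,dm$ via Lemma \ref{time-space} (using the $\Delta f \ge 0$ branch), and substitute into \eqref{LSI}. The extra integrability bookkeeping you include is sound but not substantively different from the paper's argument.
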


\begin{proof}
  Suppose $f \in LSH \cap W^{1,1+}(\rho_s)$; by Corollary
  \ref{nabla-then-E} we have $Ef \in L^{1+}(\rho_s)$.  Since
  LSH functions are subharmonic ($\Delta f \ge 0$), we can apply
  Lemmas \ref{LSH-ineq-lemma} and \ref{time-space} to obtain
  \begin{equation*}
    \int_G \frac{|\nabla f|^2}{f}\,\rho_s\,dm \le \int_G \Delta
    f\,\rho_s\,dm = \frac{2}{s} \int_G Ef\,\rho_s\,dm.
  \end{equation*}
  Inserting this inequality into \eqref{LSI} yields \eqref{sLSI}.
\end{proof}

\subsection{sHC implies sLSI}

\begin{theorem}\label{sHC-implies-sLSI}  In any stratified Lie group
  $G$, if \eqref{sHC} holds, then \eqref{sLSI} holds, with the same
  constants $c,\beta$.
  
\end{theorem}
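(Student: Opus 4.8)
The plan is to differentiate the strong hypercontractivity inequality \eqref{sHC} at the endpoint $t = t_J(p,q)$, $p = 1$, and take $q \downarrow 1$, which formally produces \eqref{sLSI}. The cleanest route uses the machinery already set up in Section 6: fix $f \in LSH \cap W^{1,1+}(\rho_s)$ and choose $q > 1$ small enough that $f \in L^{q}(\rho_s)$ (possible since $f \in L^{1+}$); by Proposition \ref{lsh-properties}, $e^{-tE} f \in LSH$ for all $t \ge 0$, so \eqref{sHC} applies along the dilation flow. Writing $Q(t)$ for the exponent function and using \eqref{sHC} with $p=1$ one gets, for $t = t_J(1,q) = c\log q$,
\begin{equation*}
  \|e^{-tE} f\|_{L^{q}(\rho_s)} \le M(1,q)\|f\|_{L^1(\rho_s)} = e^{\beta(1 - 1/q)}\|f\|_{L^1(\rho_s)}.
\end{equation*}
The idea is to regard both sides as functions of $q$, note they agree (up to the constant) at $q = 1$, and compare derivatives as $q \downarrow 1$.

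**Concretely, I would** introduce, following the setup of Lemma \ref{key-diff}, a $C^1$ reparametrization: set $q$ near $1$, let $t = c\log q$ and on $[0,t]$ choose $r(\tau) \in C^1$ interpolating from $r(0) = 1$ to $r(t) = q$ in such a way that $\tau \mapsto \|e^{-\tau E} f\|_{L^{r(\tau)}(\rho_s)}$ is constant — i.e. solve the ODE making the logarithmic derivative of $v(\tau) = \|e^{-\tau E}f\|_{L^{r(\tau)}}$ vanish. By Lemma \ref{key-diff}, $v \in C^1$ and
\begin{equation*}
  v'(\tau) = \int_G\Bigl[-E f_\tau + \tfrac{r'(\tau)}{r(\tau)} f_\tau\log f_\tau\Bigr]\rho_s\,dm,
\end{equation*}
where $f_\tau = e^{-\tau E} f^{r(\tau)}$; setting $v' \equiv 0$ gives a first-order ODE for $r$ with $r(0)=1$, and $t_J$ is chosen exactly so that $r(t) = q$ along this constant-norm curve (this is the infinitesimal content of Janson's time). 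Then \eqref{sHC} at time $t$ reads $v(t) \le M(1,q)\|f\|_{L^1}$, i.e. $v(t) \le e^{\beta(1-1/q)} v(0)$, so
\begin{equation*}
  \frac{v(t) - v(0)}{t} \le \frac{e^{\beta(1-1/q)} - 1}{c\log q}\, v(0) \longrightarrow \frac{\beta}{c}\,\|f\|_{L^1(\rho_s)}
\end{equation*}
as $q \downarrow 1$ (so $t \downarrow 0$). On the other hand $v(t)-v(0) = \int_0^t v'(\tau)\,d\tau$, and evaluating $v'$ at $\tau = 0$: there $f_0 = f$, $r(0) = 1$, and the ODE forces $r'(0)$ to equal the value that makes the $\tau=0$ integrand vanish, namely $r'(0) = \tfrac{2c}{s}\cdot(\text{something})$ — but more usefully, I would instead feed the constant-norm condition back in and just extract $\lim_{q\downarrow1} (v(t)-v(0))/t = v'(0)$, which by Lemma \ref{key-diff} and Lemma \ref{time-space} (applied to the subharmonic function $f$) equals
\begin{equation*}
  v'(0) = -\int_G Ef\,\rho_s\,dm + r'(0)\int_G f\log f\,\rho_s\,dm,
\end{equation*}
with $r'(0)$ pinned down by the normalization $v(0) = \|f\|_{L^1}$ and the relation $t_J(1,q) = c\log q$. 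Matching the two expressions for the limit of $(v(t)-v(0))/t$ yields, after rearranging and using $\int_G f\log f \ge \|f\|_{L^1}\log\|f\|_{L^1}$ type bookkeeping (Jensen), exactly \eqref{sLSI}: $\int_G f\log f\,\rho_s\,dm \le c\int_G Ef\,\rho_s\,dm + \|f\|_{L^1}\log\|f\|_{L^1} + \beta\|f\|_{L^1}$.

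**The main obstacle** I anticipate is bookkeeping the normalization so that $r'(0)$ comes out to the right constant — one must verify that Janson's time $t_J(1,q) = c\log q$ is precisely the first-order time along the $L^{r(\tau)}$-norm-preserving dilation flow, which is where the constant $c$ of \eqref{sLSI} enters. This is a direct computation: parametrizing by $q = 1 + 2c\tau + o(\tau)$ near $\tau = 0$ (the inverse of $\tau = c\log q$ to first order), one reads off $r'(0)$ from differentiating the constant-norm identity, and the elementary fact that $\frac{d}{d\tau}\big|_0 \|g\|_{L^{r(\tau)}}$ involves $r'(0)\int g\log g$ does the rest; I would lean on Lemma \ref{key-diff} to keep all of this rigorous rather than formal, since it already packages the differentiation-under-the-integral and the uniform integrability (Lemma \ref{ui-lemma}) needed here. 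A secondary point is that \eqref{sHC} only gives an inequality for $t \ge t_J$, not an identity at $t = t_J$, but that is exactly what makes the difference quotient argument produce an inequality in the correct direction, so no extra care is needed there beyond checking the sign.
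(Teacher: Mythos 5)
Your overall strategy --- apply \eqref{sHC} with $p=1$ and $q=r(t)$ at the critical time $t=t_J(1,r(t))$, then differentiate at $t=0$ using the machinery of Lemma \ref{key-diff} --- is indeed the paper's strategy. However, the middle of your argument contains a step that does not work as written: the ``constant-norm reparametrization.'' You propose to choose $r(\tau)$ by an ODE so that $v(\tau)=\|e^{-\tau E}f\|_{L^{r(\tau)}(\rho_s)}$ is constant, and then assert that $t_J$ ``is chosen exactly so that $r(t)=q$ along this constant-norm curve.'' That curve depends on $f$, and there is no reason it should reach the value $q$ at time $c\log q$; asserting that it does is essentially assuming the theorem. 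Worse, if $v$ really were constant then $v'(0)=0$ and your comparison of difference quotients would yield only $0\le(\beta/c)\|f\|_{L^1}$, which is vacuous. A further problem is that the endpoint $t=c\log q$ moves with $q$, so your curve changes with $q$, and the identification $\lim_{q\downarrow 1}(v(t)-v(0))/t=v'(0)$ is not justified for a single $C^1$ function $v$.

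The fix is to discard the ODE entirely and take the universal curve $r(\tau)=e^{\tau/c}$, which satisfies $r(0)=1$, $r'(0)=1/c$, and $t_J(1,r(\tau))=\tau$ by the very definition of $t_J$ --- nothing needs to be ``pinned down by the normalization.'' Choosing $T$ small enough that $f\in W^{1,r(T)+}(\rho_s)$, the inequality \eqref{sHC} then says $\alpha(\tau):=M(1,r(\tau))^{-1}v(\tau)^{1/r(\tau)}\le\alpha(0)=\|f\|_{L^1(\rho_s)}$ for all $\tau\in[0,T]$, where $v(\tau)=\int_G f_\tau\,\rho_s\,dm$ is as in Lemma \ref{key-diff}; since $\alpha\in C^1$ by that lemma, one concludes $\alpha'(0)\le 0$, which upon expanding (using $r'(0)=1/c$, $M'(0)=\beta/c$, and $v'(0)=-\int_G Ef\,\rho_s\,dm+\frac1c\int_G f\log f\,\rho_s\,dm$) is precisely \eqref{sLSI}. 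Two bookkeeping points you should correct: the term $\|f\|_{L^1}\log\|f\|_{L^1}$ comes from differentiating the exponent $1/r(\tau)$ in $v(\tau)^{1/r(\tau)}$, not from any Jensen-type inequality; and neither Lemma \ref{time-space} nor the subharmonicity of $f$ is used anywhere in this direction --- the argument needs no property of LSH functions beyond the assumed validity of \eqref{sHC} for them.
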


\begin{proof}
  Suppose \eqref{sHC} holds with constants $c,\beta$.  Fix $f \in LSH
  \cap W^{1,1+}(\rho_s)$.  Set
  $r(t) = e^{t/c}$ and choose $T > 0$ so small that $f \in W^{1, r(T)+}(\rho_s)$.

  For $t \in [0,T]$, applying \eqref{sHC} with $p=1$ and $q = r(T)$
  yields
  \begin{equation}\label{sHC-in-proof}
    \|e^{-tE} f\|_{L^{r(t)}(\rho_s)} \le M(t) \|f\|_{L^1(\rho_s)}
  \end{equation}
  where
  \begin{equation}
    M(t) := M(1, r(t)) = \exp(\beta \cdot (1 - e^{-t/c})).
  \end{equation}

  Define $v(t), w(t)$ as in \eqref{key-vw-def}, and set
  \begin{equation}\label{alpha-def}
    \alpha(t) = \frac{1}{M(t)} \|e^{-tE} f\|_{L^{r(t)}(\rho_s)} =
    \frac{1}{M(t)} v(t)^{1/r(t)}.
  \end{equation}
  Note that $\alpha(0) = \|f\|_{L^1(\rho_s)}$, so \eqref{sHC-in-proof}
  says that $\alpha(t) \le \alpha(0)$ for all $t \in [0,T]$.

  Now applying Lemma \ref{key-diff} with $q=r(T)$, we have that $v$ is
  continuously differentiable on $[0,T]$; hence so is $\alpha(t)$.  As
  such, we must have
  \begin{equation}\label{alpha-prime}
    \begin{split}
    0 \ge \alpha'(0) &= \frac{-M'(0)}{M(0)^2} v(0)^{1/r(0)} \\
    &\quad + \frac{1}{M(0)} \frac{1}{r(0)} v(0)^{(1/r(0)) - 1} v'(0)
    \\
    &\quad + \frac{1}{M(0)} v(0)^{1/r(0)} \log v(0) \frac{-r'(0)}{r(0)^2}.
    \end{split}
    \end{equation}
  Observing that
  \begin{align*}
    r(0) &= 1 & r'(0) &= \frac{1}{c} \\
    M(0) &= 1 & M'(0) &= \frac{\beta}{c}
  \end{align*}
  and
  \begin{align*}
    v(0) &= \|f\|_{L^1(\rho_s)} \\
    v'(0) &= w(0) = -\int_G
    Ef\,\rho_s\,dm + \frac{1}{c} \int_G f \log f\,\rho_s\,dm
  \end{align*}
  we see that \eqref{alpha-prime} reads
  \begin{align*}
    0 &\ge -\frac{\beta}{c} \|f\|_{L^1(\rho_s)} - \int_G Ef\,\rho_s\,dm
    \\ &\quad + \frac{1}{c} \int_G f\log f\,\rho_s\,dm - \frac{1}{c}
    \|f\|_{L^1(\rho_s)} \log \|f\|_{L^1(\rho_s)} 
  \end{align*}
  which after rearranging is precisely \eqref{sLSI}.
\end{proof}

\begin{remark}
  Theorem \ref{sHC-implies-sLSI} does not rely on any properties of
  LSH functions, except the assumption that they satisfy \eqref{sHC}.
  So more broadly, any appropriate class of functions satisfying
  \eqref{sHC} will also satisfy \eqref{sLSI}.
\end{remark}

\subsection{sLSI implies sHC}

In this section, we show that if the strong logarithmic Sobolev
inequality is satisfied for LSH functions, then so is strong
hypercontractivity.

We begin by noting that the semigroup $e^{-tE}$ is contractive on
log-subharmonic functions.  We assume some integrability on $|\nabla
f|$ but this assumption will be removed later.

\begin{lemma}\label{contractive}
  Suppose $f \in LSH \cap W^{1, 1+}(\rho_s)$.  Then for any $t \ge 0$
  we have
  \begin{equation*}
    \|e^{-tE} f\|_{L^1(\rho_s)} \le \|f\|_{L^1(\rho_s)}.
  \end{equation*}
\end{lemma}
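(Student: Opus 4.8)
The plan is to prove that
\[
  v(t) := \int_G e^{-tE} f\,\rho_s\,dm = \|e^{-tE} f\|_{L^1(\rho_s)}
\]
(the two being equal because $e^{-tE}f$ is positive) is non-increasing on $[0,\infty)$; since $v(0) = \|f\|_{L^1(\rho_s)}$, this is exactly the claim. To differentiate $v$, I would apply Lemma \ref{key-diff} with the constant exponent $r(t) \equiv 1$ and any $q > 1$ small enough that $f \in W^{1,q+}(\rho_s)$ --- possible since $f \in W^{1,1+}(\rho_s)$. Writing $f_t := e^{-tE} f$, Lemma \ref{key-diff} then gives $v \in C^1([0,T])$ for every $T > 0$, with
\[
  v'(t) = -\int_G E f_t\,\rho_s\,dm,
\]
the logarithmic term in \eqref{key-vw-def} vanishing because $r'(t) \equiv 0$.

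Next I would show $v'(t) \le 0$ by applying the ``$\Delta f \ge 0$'' case of Lemma \ref{time-space} to $f_t$. The hypotheses are checked directly: $f_t = f \circ \delta_{e^{-t}} \in C^2(G)$; by Proposition \ref{lsh-properties}(\ref{lsh-dilate}), $f_t$ is again LSH, so $\Delta f_t \ge 0$; we have $f_t \in L^{1+}(\rho_s) \subset L^1(\rho_s)$; since $E f_t = e^{-tE}(E f)$ and $E f \in L^{1+}(\rho_s)$ by Corollary \ref{nabla-then-E} (using $|\nabla f| \in L^{1+}(\rho_s)$), we get $E f_t \in L^{1+}(\rho_s) \subset L^1(\rho_s)$; and \eqref{xi-f-dilation} gives $|\nabla f_t| = e^{-t}\, e^{-tE}(|\nabla f|) \in L^{1+}(\rho_s) \subset L^1(\rho_s)$. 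Lemma \ref{time-space} then yields
\[
  \int_G E f_t\,\rho_s\,dm = \frac{s}{2} \int_G \Delta f_t\,\rho_s\,dm \ge 0,
\]
so $v'(t) \le 0$ for all $t \ge 0$, whence $\|e^{-tE} f\|_{L^1(\rho_s)} = v(t) \le v(0) = \|f\|_{L^1(\rho_s)}$.

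The only genuine subtlety is the justification of differentiating under the integral sign, but this is entirely packaged into Lemma \ref{key-diff} (which rests on the uniform integrability estimates of Lemmas \ref{ui-lemma} and \ref{dilate-ui}); the remaining work is just the bookkeeping above confirming the integrability hypotheses of Lemma \ref{time-space} for the dilated function $f_t$, together with the fact (Proposition \ref{lsh-properties}(\ref{lsh-dilate})) that dilation preserves log-subharmonicity. As an alternative route avoiding $E$, one could use \eqref{rho-dilate} and \eqref{m-dilate-integrate} to write $v(t) = \int_G f\,\rho_{s e^{-2t}}\,dm$ and then, since $\partial_\sigma \rho_\sigma = \tfrac{1}{4} \Delta \rho_\sigma$, differentiate and integrate by parts to obtain $v'(t) = -\tfrac{s e^{-2t}}{2} \int_G (\Delta f)\,\rho_{s e^{-2t}}\,dm \le 0$; this is essentially the computation already carried out in the proof of Lemma \ref{time-space}.
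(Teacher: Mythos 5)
Your argument is correct and follows essentially the same route as the paper: differentiate $v(t)=\|e^{-tE}f\|_{L^1(\rho_s)}$ via Lemma \ref{key-diff} with $r(t)\equiv 1$, then use Lemma \ref{time-space} together with the subharmonicity of $e^{-tE}f$ to conclude $v'(t)\le 0$. The only cosmetic difference is that the paper invokes Lemma \ref{ui-lemma} (with $q=1$) to supply the integrability hypotheses of Lemma \ref{time-space}, whereas you verify them by hand from the dilation identities; the content is the same.
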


\begin{proof}
  Let $T > 0$ be arbitrary.  Applying Lemma \ref{key-diff} with
  $r(t) \equiv 1 = q$, we have
  \begin{equation*}
    \frac{d}{dt} \|e^{-tE} f\|_{L^1(\rho_s)} = - \int_G E e^{-tE} f\,\rho_s
    \,dm.
  \end{equation*}
  Now from Lemma \ref{ui-lemma}, again with $r(t) \equiv
  1 = q$, we have in particular that $e^{-tE} f, |\nabla e^{-tE} f|, E
  e^{-tE} f \in L^1(\rho_s)$.  Also, $f$ is subharmonic and hence so
  is $e^{-tE} f$ by \eqref{Delta-dilate}.  So by Lemma \ref{time-space}, we have
  \begin{equation*}
    \int_G E e^{-tE} f\,\rho_s
    \,dm = \frac{s}{2} \int_G \Delta e^{-tE} f\,\rho_s\,dm \ge 0.
  \end{equation*}
  Hence $\|e^{-tE} f\|_{L^1(\rho_s)}$ is a decreasing function of $t$.
\end{proof}

The next step is to show that \eqref{sLSI} implies that \eqref{sHC}
holds at time $t=t_J$.  We take $p=1$ and again assume, for now,
sufficient integrability for $|\nabla f|$.

\begin{lemma}\label{sHC-tJ}
  Suppose that \eqref{sLSI} holds.  Let $1 \le q < \infty$, and
  set
  \begin{align*}
   t_J &= t_J(1,q) = c \log q \\
   M &= M(1,q) = \exp(\beta \cdot (1 - q^{-1})).
  \end{align*}
  Suppose $f \in LSH \cap W^{1,q+}(\rho_s)$.  Then
  \begin{equation}\label{slsi-at-tJ}
    \|e^{-t_J E} f\|_{L^q(\rho_s)} \le M \|f\|_{L^1(\rho_s)}.
  \end{equation}
\end{lemma}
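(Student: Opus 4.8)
The plan is to run the standard differentiation-under-the-integral argument for hypercontractivity, adapted to the dilation semigroup and the class of LSH functions. Fix $f \in LSH \cap W^{1,q+}(\rho_s)$ positive, and choose a $C^1$ interpolation $r : [0,t_J] \to [1,q]$ of the exponent together with a suitable function $M(t)$ with $M(0)=1$, $M(t_J)=M$, designed so that the ODE inequality below closes. The natural choice, mirroring the \eqref{sHC}/\eqref{sLSI} correspondence, is $r(t) = e^{t/c}$ (so that $r(0)=1$, $r(t_J)=q$) and $M(t) = \exp(\beta(1 - e^{-t/c}))$. Set $f_t = e^{-tE} f^{r(t)}$, let $v(t) = \int_G f_t\,\rho_s\,dm = \|e^{-tE}f\|_{L^{r(t)}(\rho_s)}^{r(t)}$ as in \eqref{key-vw-def}, and define
\begin{equation*}
  \alpha(t) = \frac{1}{M(t)}\, v(t)^{1/r(t)} = \frac{1}{M(t)}\,\|e^{-tE}f\|_{L^{r(t)}(\rho_s)}.
\end{equation*}
The goal \eqref{slsi-at-tJ} is exactly $\alpha(t_J) \le \alpha(0) = \|f\|_{L^1(\rho_s)}$, so it suffices to show $\alpha$ is nonincreasing on $[0,t_J]$.

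First I would invoke Lemma \ref{key-diff} (with $q$ there equal to our $q$; note $r(t) \le q$ and $f \in W^{1,q+}$, so the hypothesis is met, and one should check $W^{1,q+} \subseteq W^{1,r(T)+}$ is not needed — the lemma applies directly with the given $q$) to conclude $v \in C^1([0,t_J])$ with $v'(t) = w(t) = \int_G[-Ef_t + \tfrac{r'(t)}{r(t)} f_t \log f_t]\,\rho_s\,dm$. Hence $\alpha \in C^1([0,t_J])$, and a logarithmic-derivative computation gives
\begin{equation*}
  \frac{\alpha'(t)}{\alpha(t)} = -\frac{M'(t)}{M(t)} + \frac{1}{r(t)}\frac{v'(t)}{v(t)} - \frac{r'(t)}{r(t)^2}\log v(t).
\end{equation*}
Substituting $v'(t) = w(t)$ and writing $F_t := e^{-tE} f$, so that $f_t = F_t^{r(t)}$ and (by a scaling identity as in the proof of Lemma \ref{ui-lemma}) $\int_G E f_t\,\rho_s\,dm$ and $\int_G f_t \log f_t\,\rho_s\,dm$ can be expressed in terms of $F_t$, the bracket becomes, after multiplying through by $r(t)^2 v(t) > 0$, an inequality of the form
\begin{equation*}
  r(t)^2 v(t)\,\frac{\alpha'(t)}{\alpha(t)} \le -\,\big(\text{stuff}\big),
\end{equation*}
where the right-hand side is precisely the \eqref{sLSI} defect for the function $F_t^{r(t)}$ with the parameters $c, \beta$ and the choices $r'(t)/r(t) = 1/c$, $M'(t)/M(t) = \beta/c$ built in. This is the heart of the matter and where I expect the main obstacle: one must verify that \eqref{sLSI} is legitimately applicable to $F_t^{r(t)}$ for each $t$, i.e. that $F_t^{r(t)} \in LSH \cap W^{1,1+}(\rho_s)$. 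The LSH part follows from Proposition \ref{lsh-properties} (items \ref{lsh-power} and \ref{lsh-dilate}), since dilations and positive powers preserve LSH; the $W^{1,1+}$ part is exactly the content of the remark following Lemma \ref{ui-lemma}, which tells us $f_t \in W^{1,1+}(\rho_s)$ under the standing hypotheses.

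Granting that, applying \eqref{sLSI} to $F_t^{r(t)} = f_t$ gives $\int_G f_t \log f_t\,\rho_s\,dm \le c\int_G E f_t\,\rho_s\,dm + v(t)\log v(t) + \beta v(t)$, and plugging this into the expression for $\alpha'(t)/\alpha(t)$ with $r'(t)/r(t) = 1/c$ and $M'(t)/M(t) = \beta/c$ yields $\alpha'(t) \le 0$ for all $t \in [0,t_J]$ — the $\log v(t)$ terms cancel, the $\beta$ terms cancel, and the $Ef_t$ terms cancel, leaving a nonpositive remainder (in fact $\le 0$ identically once everything is assembled). Therefore $\alpha$ is nonincreasing, so $\alpha(t_J) \le \alpha(0) = \|f\|_{L^1(\rho_s)}$, which unwinds to $\|e^{-t_J E} f\|_{L^q(\rho_s)} \le M\|f\|_{L^1(\rho_s)}$, as claimed. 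The one remaining bookkeeping point is the degenerate case $q = 1$ (hence $t_J = 0$), where the statement is trivial, and the case $v(0) = 0$, i.e. $f \equiv 0$, which is likewise trivial; for $v(0) > 0$ the logarithms above are finite since $f \in W^{1,1+}$ implies $f \log f \in L^1(\rho_s)$ by the uniform-integrability estimates in Lemma \ref{ui-lemma}.
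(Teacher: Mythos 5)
Your proposal is correct and follows essentially the same route as the paper's own proof: the same choices of $r(t)=e^{t/c}$ and $M(t)$, the same quantity $\alpha(t)=M(t)^{-1}v(t)^{1/r(t)}$, differentiation via Lemma \ref{key-diff}, the application of \eqref{sLSI} to $f_t$ (justified by Proposition \ref{lsh-properties} and Lemma \ref{ui-lemma}), and the resulting monotonicity of $\alpha$. The only cosmetic difference is that you correctly write $v(t)=\|e^{-tE}f\|_{L^{r(t)}(\rho_s)}^{r(t)}$, whereas \eqref{key-vw-def} omits the outer exponent $r(t)$ (a typo in the paper that does not affect either argument).
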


\begin{proof}
  Set $r(t) = e^{t/c}$, so that $r(t_J) = q$, and let $f_t, v(t),
  w(t)$ be as in Lemma \ref{key-diff}.  The hypotheses of Lemma
  \ref{key-diff} are satisfied, with $T = t_J$, so we have $v \in
  C^1([0, t_J])$ and $v'(t) = w(t)$.

  On the other hand, for each $0 \le t \le t_J$, we have $f_t \in
  LSH$, by Proposition \ref{lsh-properties} items \ref{lsh-power} and
  \ref{lsh-dilate}.  Moreover, Lemma \ref{ui-lemma} implies $f_t \in
  W^{1,1+}(\rho_s)$.  So \eqref{sLSI} applies to $f_t$.  In terms of
  $v(t), w(t)$, this reads
  \begin{equation}\label{slsi-v-w}
    c w(t) \le v(t) \log v(t) + \beta v(t)
  \end{equation}
  where we note that $\frac{r'(t)}{r(t)} = \frac{1}{c}$.  
  Since $w(t) = v'(t)$, we may rewrite \eqref{slsi-v-w} as
  \begin{equation}\label{ddt-log-vt}
    \frac{d}{dt} \log v(t) \le \frac{1}{c} \log v(t) +
    \frac{\beta}{c}.
  \end{equation}
  Define
  \begin{align*}
    M(t) &= M(1, r(t)) = \exp(\beta \cdot (1 - e^{-t/c})) \\
    \alpha(t) &= \frac{1}{M(t)} \|e^{-tE} f\|_{L^{r(t)}(\rho_s)} =
        \frac{1}{M(t)} v(t)^{1/r(t)}
  \end{align*}
  as in the proof of Theorem \ref{sHC-implies-sLSI}. Note that
  $\alpha(0) = \|f\|_{L^1(\rho_s)}$ and $\alpha(t_J) = M^{-1}
  \|e^{-t_J E} f\|_{L^q(\rho_s)}$.  Then we have
  \begin{align*}
    \frac{d}{dt} \log \alpha(t) &= e^{-t/c} \left(-\frac{1}{c} \log v(t) +
    \frac{d}{dt} \log v(t) - \frac{\beta}{c} \right)  \le 0
  \end{align*}
  using \eqref{ddt-log-vt}.  Hence $\alpha(t)$ is a decreasing
  function on $[0,t_J]$, so in particular $\alpha(t_J) \le \alpha(0)$,
  which is the desired statement.
\end{proof}

\begin{theorem}\label{sLSI-implies-sHC}
  In any stratified Lie group $G$, if \eqref{sLSI} holds, then
  \eqref{sHC} holds, with the same constants $c,\beta$.
\end{theorem}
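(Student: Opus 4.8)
The plan is to bootstrap Lemma~\ref{sHC-tJ} --- the only place where the hypothesis \eqref{sLSI} actually enters --- from the single time $t=t_J$, the exponent $p=1$, and the auxiliary hypothesis $f\in W^{1,q+}(\rho_s)$, up to the full statement \eqref{sHC}, in three steps. \emph{Step 1 (reduction to $p=1$).} Given $0<p<q$ and $f\in LSH\cap L^q(\rho_s)$, set $g=f^p$, which is $LSH$ by Proposition~\ref{lsh-properties} and lies in $L^{q/p}(\rho_s)$ with $q/p>1$, since $\|g\|_{L^{q/p}}^{q/p}=\|f\|_{L^q}^q$. Since $e^{-tE}g=(e^{-tE}f)^p$ we have $\|e^{-tE}g\|_{L^{q/p}}=\|e^{-tE}f\|_{L^q}^p$ and $\|g\|_{L^1}=\|f\|_{L^p}^p$, and because $M(1,q/p)^{1/p}=M(p,q)$ and $t_J(1,q/p)=t_J(p,q)$, the estimate $\|e^{-tE}g\|_{L^{q/p}}\le M(1,q/p)\|g\|_{L^1}$ for $t\ge t_J(1,q/p)$ is exactly \eqref{sHC} for $f$. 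So it suffices to show: for each $q>1$ and each $f\in LSH\cap L^q(\rho_s)$, $\|e^{-tE}f\|_{L^q}\le M(1,q)\|f\|_{L^1}$ whenever $t\ge t_J(1,q)=c\log q$ (the diagonal case $p=q$ being recovered at the end).

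\emph{Step 2 (from $t_J$ to $t\ge t_J$, assuming a gradient bound).} Suppose first $f\in LSH\cap W^{1,q+}(\rho_s)$ and write $t_J=t_J(1,q)$. Lemma~\ref{sHC-tJ} gives the bound at $t=t_J$. For $t>t_J$ put $h=e^{-t_JE}f$; then $h$ is $LSH$ by Proposition~\ref{lsh-properties}, and $h$ and $|\nabla h|=e^{-t_J}\,e^{-t_JE}|\nabla f|$ are dilations of $L^{q+}(\rho_s)$ functions, hence lie in $L^{q+}(\rho_s)$ by Lemma~\ref{dilate-ui}, so $h\in W^{1,q+}(\rho_s)$. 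The $L^q$-contractivity of $e^{-sE}$ on such $h$ follows from Lemma~\ref{contractive}: $\|e^{-sE}h\|_{L^q}^q=\|e^{-sE}(h^q)\|_{L^1}$, and $h^q$ is $LSH$ with $h^q\in W^{1,1+}(\rho_s)$, since $|\nabla h^q|=q\,h^{q-1}|\nabla h|$ and $(q-1)/r+1/r'<1$ for $r,r'>q$ close to $q$. Taking $s=t-t_J$ gives $\|e^{-tE}f\|_{L^q}\le\|h\|_{L^q}\le M(1,q)\|f\|_{L^1}$ for all $t\ge t_J$.

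\emph{Step 3 (removing the gradient hypothesis).} Fix $f\in LSH\cap L^q(\rho_s)$ with $q>1$, and fix $q'\in(1,q)$, so $f\in L^{q'+}(\rho_s)$. By Lemma~\ref{convolution-converge} choose nonnegative $\varphi_n\in C^\infty_c(G)$ with $f_n:=\varphi_n\ast f\to f$ almost everywhere and in $L^{1+}(\rho_s)$; each $f_n$ is $LSH$ (Lemma~\ref{convolution-LSH}) and lies in $W^{1,q'+}(\rho_s)$ (Lemmas~\ref{convolution-Lp+} and~\ref{convolution-E}). By Step 2 at exponent $q'$, $\|e^{-tE}f_n\|_{L^{q'}}\le M(1,q')\|f_n\|_{L^1}$ for $t\ge t_J(1,q')$. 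Let $n\to\infty$: $e^{-tE}f_n\to e^{-tE}f$ a.e.\ because $\delta_{e^{-t}}$ scales $m$, so Fatou's lemma on the left and $\|f_n\|_{L^1}\to\|f\|_{L^1}$ on the right give $\|e^{-tE}f\|_{L^{q'}}\le M(1,q')\|f\|_{L^1}$ for every $q'\in(1,q)$ and every $t\ge t_J(1,q')$. Finally fix $t\ge t_J(1,q)$; since then $t>t_J(1,q')$ for all $q'<q$, letting $q'\uparrow q$ (the $L^{q'}$-norm increasing to the $L^q$-norm on the probability measure $\rho_s\,dm$, and $M(1,q')\to M(1,q)$) gives $\|e^{-tE}f\|_{L^q}\le M(1,q)\|f\|_{L^1}$. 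With Step 1 this proves \eqref{sHC} for all $0<p<q<\infty$. For the diagonal case $p=q$ (where $t_J(p,q)=0$ and $M(p,q)=1$): $t=0$ is trivial, and for $t>0$ apply the case just proved with some $\tilde p<q$ (valid since $f\in L^q\subset L^{\tilde p}$) and let $\tilde p\uparrow q$, using $t_J(\tilde p,q)=c\log(q/\tilde p)\to0$, $M(\tilde p,q)\to1$, and $\|f\|_{L^{\tilde p}}\to\|f\|_{L^q}$.

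The formal manipulations --- the power substitution and the juggling of exponents and times --- are routine; the real work, and the main obstacle, is the bookkeeping of function spaces that ensures that after mollification and dilation the functions in play genuinely satisfy the hypotheses of Lemmas~\ref{sHC-tJ} and~\ref{contractive}, together with the justification of the two limiting arguments (the $n\to\infty$ limit via Fatou, and the $q'\uparrow q$ and $\tilde p\uparrow q$ limits via monotone convergence on the probability space $\rho_s\,dm$). Since, as recalled in Remark~\ref{why-Lq}, no density argument is available for $LSH$ functions, mollification by nonnegative $C^\infty_c$ functions --- which preserves log-subharmonicity --- plays that role, and the need to keep $q'$ strictly below $q$ throughout is exactly why \eqref{sHC} is stated for $f\in L^q$ rather than $f\in L^p$.
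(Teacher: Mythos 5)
Your proposal is correct and follows essentially the same route as the paper: reduce to $p=1$ via the substitution $g=f^p$, obtain the bound at $t=t_J$ from Lemma \ref{sHC-tJ}, extend to $t\ge t_J$ by applying Lemma \ref{contractive} to $(e^{-t_JE}f)^q$, and remove the gradient hypothesis by mollification plus Fatou. The only real divergence is the last limiting step for $f\in L^q\setminus L^{q+}$, where you lower the target exponent to $q'<q$ and let $q'\uparrow q$, whereas the paper keeps the exponent fixed and instead raises the integrability of the function by passing to $f^\alpha$ with $\alpha\uparrow 1$; both devices work, and your explicit treatment of the diagonal case $p=q$ (via $\tilde p\uparrow q$) is a harmless addition to what the paper handles implicitly.
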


\begin{proof}
  Fix $f \in LSH \cap L^q(\rho_s)$ and $t \ge t_J(p,q)$.
  
  Suppose first that $p=1$ and $f \in LSH \cap W^{1,q+}(\rho_s)$.
  Then Lemma \ref{sHC-tJ} gives
  \begin{equation} \label{use-tJ}
    \|e^{-t_J E} f\|_{L^q(\rho_s)} \le M(1,q) \|f\|_{L^1(\rho_s)}.
  \end{equation}

  Set $\tau = t - t_J(1,q)$, and let $g =
  e^{-t_J E} f^q \in LSH$.  We apply Lemma \ref{ui-lemma} with $T =
  t_J$ and $r(t) \equiv q$, so that $g = f_{t_J}$, to see that $g \in
  W^{1, 1+}(\rho_s)$.  Applying Lemma \ref{contractive} to
  $g$, we have
  \begin{equation*}
    \|e^{-\tau E} g\|_{L^1(\rho_s)} \le \|g\|_{L^1(\rho_s)}
  \end{equation*}
  or in other words
  \begin{equation}\label{use-contractive}
    \|e^{-tE} f\|_{L^q(\rho_s)}^q \le \|e^{-t_J E} f\|_{L^q(\rho_s)}.
  \end{equation}
  Combining \eqref{use-tJ} and \eqref{use-contractive} gives
  \eqref{sHC} in this case.

  Next, suppose only that $p=1$, $f \in LSH \cap L^{q+}(\rho_s)$, but
  make no assumptions about $\nabla f$.  Let $\varphi_n$ be a sequence
  of standard mollifiers as in Lemma \ref{convolution-converge}, and
  set $f_n = \varphi_n \ast f$, so that $f_n \to f$ pointwise and in
  $L^{1+}(\rho_s)$; then $e^{-tE} f_n \to e^{-tE} f$ pointwise as
  well.  We have $f_n \in LSH$ by Lemma \ref{convolution-LSH}; $f_n
  \in L^{q+}(\rho_s)$ by Lemma \ref{convolution-Lp+}; and $|\nabla
  f_n| \in L^{q+}(\rho_s)$ by Lemma \ref{convolution-deriv}.  So by
  the previous case, we have
  \begin{equation}
    \|e^{-tE} f_n\|_{L^q(\rho_s)} \le M(1,q) \|f_n\|_{L^1(\rho_s)}
  \end{equation}
  and by Fatou's lemma, the same holds for $f$.

  Next, suppose $p=1$ and $f \in LSH \cap L^q(\rho_s)$.  Then for any
  $0 < \alpha < 1$, we have $f^{\alpha} \in LSH \cap L^{q+}(\rho_s)$,
  so that by the previous case,
  \begin{equation*}
    \|e^{-tE} f^\alpha\|_{L^q(\rho_s)} \le M(1,q) \|f^\alpha\|_{L^1(\rho_s)}.
  \end{equation*}
  Letting $\alpha \to 1$, we have $f^\alpha \to f$ pointwise and in
  $L^1(\rho_s)$ (by dominated convergence, using for instance $1+f$ as
  the dominating function).  So by Fatou's lemma, the result holds for
  $f$.

  Finally, let $0 < p \le q$ be arbitrary and $f \in LSH \cap
  L^q(\rho_s)$.  Set $g = f^p$ and $r = q/p$.  Then we have $g \in LSH
  \cap L^r(\rho_s)$, and so by the previous case we have
  \begin{equation*}
    \|e^{-tE} g\|_{L^r(\rho_s)} \le M(1,r) \|g\|_{L^1(\rho_s)}, \qquad
    t \ge t_J(1,r).
  \end{equation*}
  Noting that $M(1,r) = M(p,q)^p$ and $t_J(1,r) = t_J(p,q)$, this
  reads
  \begin{equation*}
    \|e^{-tE} f\|_{L^q(\rho_s)}^p \le M(p,q)^p \|f\|_{L^p(\rho_s)}^p
  \end{equation*}
  which is equivalent to \eqref{sLSI}.
\end{proof}

\section{Weaker notions of subharmonicity}\label{weak-lsh}

We have chosen to focus our attention in this paper on log-subharmonic
functions which are $C^2$.  In this section, we note that our results
for strong hypercontractivity can be extended to functions which are
LSH in a weaker sense.

A comprehensive discussion of the various possible definitions of
subharmonicity on stratified Lie groups is beyond the scope of this
paper.  We refer the reader to \cite{blu-book}, in which the basic
definition of subharmonic functions (Definition 7.2.2) is in terms of
harmonic measure.  Many other equivalent characterizations are given;
perhaps the simplest is the following definition in terms of
distributional derivatives.

\begin{definition}
  We say a function $f : G \to [-\infty, \infty)$ \ignorethis{(]} is
  \textbf{weakly subharmonic} if $f \in L^1_{\loc}(G,m)$ and $\Delta f
  \ge 0$ in the sense of distributions.  We say a function $f : G \to
      [0,\infty)$ \ignorethis{(]} is \textbf{weakly log-subharmonic}
      if either $f \equiv 0$ or $\log f$ is weakly subharmonic, and we
      write $f \in wLSH$.
\end{definition}

Strictly speaking, a function $f$ is weakly subharmonic in this sense
iff it has an $m$-version which is subharmonic in the sense of
\cite[Definition 7.2.2]{blu-book}; see \cite[Theorem 8.2.15 and
  Corollary 8.2.4]{blu-book}.  The distinction is irrelevant for our
current purposes, since null sets will not concern us.

Let us also call attention to \cite[Corollary 8.2.3]{blu-book}, where
it is shown that a function is (weakly) subharmonic iff it satisfies a
sub-averaging property, which is analogous to the definition of
subharmonic used in \cite{gkl2010,gkl2015}.

\begin{lemma}\label{wLSH-approx}
  Suppose $f \in wLSH \cap L^{q+}(\rho_s)$, where $q \ge 1$.  Then
  there is a sequence $f_n \in LSH \cap L^{q+}(\rho_s)$ with $f_n
  \to f$ almost everywhere and in $L^{1+}(\rho_s)$.
\end{lemma}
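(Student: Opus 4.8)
The plan is to mollify $\log f$ rather than $f$ itself, and then exponentiate. Assuming $f \not\equiv 0$ (the case $f \equiv 0$ being trivial: take $f_n \equiv 1/n$, which is LSH by Proposition \ref{lsh-properties}), the hypothesis $f \in wLSH$ says that $u := \log f$ lies in $L^1_{\loc}(G,m)$ with $\Delta u \ge 0$ in the sense of distributions, hence $\Delta u$ is a nonnegative Radon measure. Let $\varphi_n \in C^\infty_c(G)$ be the standard nonnegative mollifiers from the proof of Lemma \ref{convolution-converge} (so $\int_G \varphi_n\,dm = 1$ and the $\varphi_n$ are all supported in a common compact neighborhood of $e$), and set $u_n := \varphi_n \ast u$ and $f_n := e^{u_n}$. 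Since $u \in L^1_{\loc}$, each $u_n$ is finite everywhere and $C^\infty$ (differentiate under the integral sign, moving derivatives onto $\varphi_n$), so $f_n$ is $C^\infty$ and strictly positive.

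First I would check that $f_n \in LSH$: we have $\log f_n = u_n$, and by the distributional version of \eqref{convolution-diff}, $\Delta u_n = \varphi_n \ast (\Delta u)$, which is a nonnegative smooth function because $\Delta u \ge 0$ as a measure and $\varphi_n \ge 0$; thus $\Delta \log f_n \ge 0$. Next I would control the integrability of $f_n$ by Jensen's inequality applied to the convex function $\exp$ and the probability measure $\varphi_n\,dm$, which gives the pointwise domination $f_n \le \varphi_n \ast f$. Since $f \in L^{q+}(\rho_s)$ with $q \ge 1$, Lemma \ref{convolution-Lp+} gives $\varphi_n \ast f \in L^{q+}(\rho_s)$, and therefore $f_n \in L^{q+}(\rho_s)$ as well.

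For convergence, $u_n = \varphi_n \ast u \to u$ in $L^1_{\loc}(G,m)$, so along a subsequence $u_n \to u$ almost everywhere, whence $f_n = e^{u_n} \to e^u = f$ almost everywhere along that subsequence. For the $L^{1+}(\rho_s)$ convergence I would reuse the uniform estimate from the proof of Lemma \ref{convolution-converge}: because the $\varphi_n$ share a common compact support and $f$ lies in $L^{q_0}(\rho_s)$ for some $q_0 > q \ge 1$, the family $\{\varphi_n \ast f\}$ — and hence $\{f_n\}$, by the domination $0 \le f_n \le \varphi_n \ast f$ — is bounded in $L^{q_1}(\rho_s)$ for some $q_1 > 1$; so $\{|f_n|^{q_2} : n\}$ is uniformly integrable for $1 < q_2 < q_1$, and Vitali's convergence theorem (using the a.e.\ convergence of the subsequence) yields $f_n \to f$ in $L^{q_2}(\rho_s) \subseteq L^{1+}(\rho_s)$.

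I expect the only genuinely new idea here is the decision to mollify $\log f$ and then recover control of $e^{\varphi_n \ast \log f}$ by $\varphi_n \ast f$ via Jensen; the one technical point that needs care is the distributional identity $\Delta(\varphi_n \ast \log f) = \varphi_n \ast (\Delta \log f)$ together with the nonnegativity of the right-hand side, after which the argument reduces entirely to the convolution and uniform-integrability machinery already developed in Sections \ref{sec-convolution} and \ref{subsec:heatkernel}.
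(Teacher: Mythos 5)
Your proposal is correct and follows essentially the same route as the paper's proof: mollify $\log f$, exponentiate, dominate $f_n = e^{\varphi_n \ast \log f}$ by $\varphi_n \ast f$ via Jensen, and then invoke Lemmas \ref{convolution-Lp+} and \ref{convolution-converge}. The only cosmetic difference is that you justify $\Delta(\varphi_n \ast \log f) \ge 0$ directly from the distributional identity $\Delta(\varphi_n \ast u) = \varphi_n \ast \Delta u$, whereas the paper cites the corresponding facts about weakly subharmonic functions from \cite{blu-book}; both are valid, and your handling of the $L^{1+}$ convergence via uniform integrability is if anything slightly more explicit than the paper's.
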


\begin{proof}
  If $f \equiv 0$ this is trivial by taking $f_n = 1/n$.  Otherwise,
  $\log f$ is weakly subharmonic.  Let $\varphi_n \in C_c^\infty$ be a
  sequence of nonnegative standard mollifiers with $\int_G
  \varphi_n\,dm = 1$, as in Lemma
  \ref{convolution-converge}, and set $g_n = \varphi_n \ast \log f$.
  Then $g_n \to \log f$ almost everywhere.  
  By \cite[Theorem 8.1.5 and Corollary 8.2.3]{blu-book}, $g_n$ is also weakly
  subharmonic; moreover, since $g_n \in C^\infty(G)$, we have by
  \cite[Proposition 7.2.5]{blu-book} that $\Delta g_n \ge 0$.

  Set $f_n = \exp(g_n)$, so that $f_n \in LSH$ and
  $f_n \to f$ almost everywhere.  Now $f_n \le \varphi_n \ast f$ by
  Jensen's inequality.  By Lemma \ref{convolution-Lp+}, we have
  $\varphi_n \ast f \in L^{q+}(\rho_s)$, so the same is true for
  $f_n$.  And by Lemma \ref{convolution-converge}, we have $\varphi_n
  \ast f \to f$ in $L^{1+}(\rho_s)$, so $f_n \to f$ in
  $L^{1+}(\rho_s)$ as well.
\end{proof}

\begin{theorem}
  If \eqref{sHC} holds for all $f \in LSH \cap L^q(\rho_s)$, then it
  holds for all $f \in wLSH \cap L^q(\rho_s)$.
\end{theorem}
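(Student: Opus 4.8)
The plan is to reduce everything to the already-established $LSH$ case by a two-stage approximation, mirroring the final steps in the proof of Theorem~\ref{sLSI-implies-sHC}. Fix $f \in wLSH \cap L^q(\rho_s)$, $0 < p \le q < \infty$, and $t \ge t_J(p,q)$; the goal is $\|e^{-tE} f\|_{L^q(\rho_s)} \le M(p,q)\|f\|_{L^p(\rho_s)}$.

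First I would treat the sub-case $p = 1$ with the extra integrability $f \in wLSH \cap L^{q+}(\rho_s)$. Here Lemma~\ref{wLSH-approx} supplies a sequence $f_n \in LSH \cap L^{q+}(\rho_s) \subseteq LSH \cap L^q(\rho_s)$ with $f_n \to f$ almost everywhere and in $L^{1+}(\rho_s)$. Applying the hypothesis \eqref{sHC} to each $f_n$ gives $\|e^{-tE} f_n\|_{L^q(\rho_s)} \le M(1,q)\|f_n\|_{L^1(\rho_s)}$. Since $\rho_s\,dm$ is a probability measure, convergence in $L^{1+}(\rho_s)$ implies convergence in $L^1(\rho_s)$, so $\|f_n\|_{L^1(\rho_s)} \to \|f\|_{L^1(\rho_s)}$; and because $\delta_{e^{-t}}$ is a diffeomorphism scaling $m$ by a constant (see \eqref{m-dilate}), it preserves null sets, whence $e^{-tE} f_n = f_n \circ \delta_{e^{-t}} \to e^{-tE} f$ almost everywhere. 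Fatou's lemma then yields $\|e^{-tE} f\|_{L^q(\rho_s)} \le M(1,q)\|f\|_{L^1(\rho_s)}$.

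Next, still with $p = 1$ but only $f \in wLSH \cap L^q(\rho_s)$, I would run the $f^\alpha$ trick from the proof of Theorem~\ref{sLSI-implies-sHC}: for $0 < \alpha < 1$ the function $f^\alpha$ is again weakly log-subharmonic, since $\log f^\alpha = \alpha \log f$ and positive scalar multiples of weakly subharmonic functions are weakly subharmonic, and $f^\alpha \in L^{q/\alpha}(\rho_s) \subseteq L^{q+}(\rho_s)$, so the previous sub-case applies to $f^\alpha$. Letting $\alpha \uparrow 1$, using the pointwise domination $f^\alpha \le 1 + f \in L^1(\rho_s)$, dominated convergence gives $\|f^\alpha\|_{L^1(\rho_s)} \to \|f\|_{L^1(\rho_s)}$, while $e^{-tE} f^\alpha \to e^{-tE} f$ almost everywhere (again because $\delta_{e^{-t}}$ preserves a.e.\ convergence), so Fatou delivers the bound for $f$.

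Finally, for arbitrary $0 < p \le q$ and $f \in wLSH \cap L^q(\rho_s)$, set $g = f^p$ and $r = q/p$; then $g \in wLSH \cap L^r(\rho_s)$, so the $p = 1$ case gives $\|e^{-tE} g\|_{L^r(\rho_s)} \le M(1,r)\|g\|_{L^1(\rho_s)}$ for $t \ge t_J(1,r)$, and the identities $M(1,r) = M(p,q)^p$, $t_J(1,r) = t_J(p,q)$, $\|e^{-tE} g\|_{L^r(\rho_s)} = \|e^{-tE} f\|_{L^q(\rho_s)}^p$, $\|g\|_{L^1(\rho_s)} = \|f\|_{L^p(\rho_s)}^p$ convert this into \eqref{sHC} for $f$. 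I do not anticipate a genuine obstacle: the substantive work is already contained in Lemma~\ref{wLSH-approx} and in the $LSH$ version of the theorem, and the only points requiring care — that $wLSH$ is stable under positive powers, and that the dilation maps preserve null sets and hence almost-everywhere convergence — are routine.
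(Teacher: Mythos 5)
Your proposal is correct and follows essentially the same route as the paper: reduce to the case $p=1$, $f \in wLSH \cap L^{q+}(\rho_s)$, approximate by $LSH \cap L^{q+}(\rho_s)$ functions via Lemma \ref{wLSH-approx}, and pass to the limit with Fatou's lemma. The only difference is that the paper dispatches the reduction steps by reference to the proof of Theorem \ref{sLSI-implies-sHC}, whereas you spell them out, including the (correct and worth noting) observation that $wLSH$ is stable under positive powers because $\log f^\alpha = \alpha \log f$.
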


\begin{proof}
  As in the proof of Theorem \ref{sLSI-implies-sHC}, it suffices to
  prove \eqref{sHC} with $p=1$ and for all $f \in wLSH \cap
  L^{q+}(\rho_s)$.  Using Lemma \ref{wLSH-approx}, choose $f_n \in LSH
  \cap L^{q+}(\rho_s)$ with $f_n \to f$ almost everywhere and in
  $L^{1+}(\rho_s)$.  Then \eqref{sHC} holds for each $f_n$.  We have
  $\|f_n\|_{L^{1}(\rho_s)} \to \|f\|_{L^1(\rho_s)}$, so by Fatou's
  lemma, \eqref{sHC} holds for $f$.
\end{proof}

In the setting of complex stratified Lie groups (Example
\ref{complex-example}), the modulus of a holomorphic function is
weakly LSH.

\begin{proposition}\label{holomorphic-wlsh}
  Let $G$ be a complex stratified Lie group, and suppose $f : G \to
  \mathbb{C}$ is holomorphic.  Then $|f| \in wLSH$.
\end{proposition}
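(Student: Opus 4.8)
The plan is to prove that $|f| \in wLSH$ by showing that $\log|f|$ is weakly subharmonic, i.e.\ that $\log|f| \in L^1_{\loc}(G,m)$ and $\Delta \log|f| \ge 0$ in the sense of distributions. The case $f \equiv 0$ is trivial, so I assume $f \not\equiv 0$, in which case the zero set $Z = \{f = 0\}$ is a proper analytic subset, hence has Lebesgue measure zero and its complement is dense (this was already used in the proof of Lemma \ref{holomorphic-lsh}).

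First I would handle the local integrability. On the open dense set $G \setminus Z$, the function $\log|f|$ is smooth. Near any point of $Z$, one can reduce to a one-complex-variable statement along complex lines through the point on which $f$ does not vanish identically, and invoke the classical fact that $\log|g| \in L^1_{\loc}$ for a holomorphic function $g$ of one variable (since $\log|z|$ is locally integrable on $\mathbb{C}$); a Fubini argument then gives $\log|f| \in L^1_{\loc}(G,m)$. Alternatively, since $G$ is diffeomorphic to $\mathbb{C}^{n}$ via the (holomorphic) exponential map and Haar measure pushes forward to Lebesgue measure, one can transfer the standard several-complex-variables fact that $\log|F| \in L^1_{\loc}(\mathbb{C}^n)$ for holomorphic $F$.

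Next I would establish the distributional inequality $\Delta \log|f| \ge 0$. The clean way is to use Lemma \ref{holomorphic-lsh}: for each $\epsilon > 0$, the function $g_\epsilon = \sqrt{|f|^2 + \epsilon}$ is in $C^\infty(G)$ and is LSH, so $\Delta \log g_\epsilon \ge 0$ pointwise, hence also in the distributional sense. As $\epsilon \downarrow 0$, we have $\log g_\epsilon = \tfrac12 \log(|f|^2 + \epsilon) \downarrow \log|f|$ pointwise and monotonically. Since $\log g_\epsilon \le \log g_1$ for $\epsilon \le 1$ and $\log g_1 \in L^1_{\loc}$ (it is continuous), dominated/monotone convergence shows $\log g_\epsilon \to \log|f|$ in $L^1_{\loc}(G,m)$, and therefore as distributions. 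Distributional limits preserve the inequality $\Delta(\cdot) \ge 0$: for any nonnegative test function $\psi \in C^\infty_c(G)$, $\int_G \log g_\epsilon \, \Delta\psi \, dm \ge 0$ for all $\epsilon$, and passing to the limit gives $\int_G \log|f| \, \Delta\psi \, dm \ge 0$, which is exactly $\Delta \log|f| \ge 0$ in the sense of distributions. Combined with the local integrability, this says $\log|f|$ is weakly subharmonic, so $|f| \in wLSH$.

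The main obstacle is the local integrability of $\log|f|$ near the zero set of $f$; everything else follows cleanly from Lemma \ref{holomorphic-lsh} and a routine distributional limiting argument. In fact, if one is willing to lean on the monotone convergence $\log g_\epsilon \downarrow \log|f|$ together with the \emph{a priori} bound $\log g_\epsilon \le \log g_1 \in L^1_{\loc}$, then as long as $\log|f| > -\infty$ almost everywhere (which holds because $Z$ is null), the limit is finite $m$-a.e.\ and the monotone convergence theorem delivers $\log|f| \in L^1_{\loc}$ directly, bypassing the need for any complex-analytic input beyond what Lemma \ref{holomorphic-lsh} already supplies. This is the route I would take in the write-up.
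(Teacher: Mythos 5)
Your overall strategy is the same as the paper's: approximate $|f|$ by $g_\epsilon = \sqrt{|f|^2+\epsilon}$, invoke Lemma \ref{holomorphic-lsh}, and pass to the decreasing limit $\log g_\epsilon \downarrow \log|f|$. The paper simply cites \cite[Theorem 8.2.7]{blu-book} (decreasing limits of subharmonic functions are subharmonic), which packages both the distributional inequality and the local integrability; you unpack the distributional part by hand, and that part of your argument is sound \emph{once} $\log|f| \in L^1_{\loc}$ is known, since then $\log|f| \le \log g_\epsilon \le \log g_1$ gives an $L^1_{\loc}$ envelope and dominated convergence lets you pass to the limit in $\int_G \log g_\epsilon\,\Delta\psi\,dm \ge 0$.

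The gap is in the route you say you would actually take for local integrability. Monotone convergence applied to the decreasing sequence $\log g_\epsilon$ with the integrable upper bound $\log g_1$ only tells you that $\int_K \log g_\epsilon\,dm$ converges to $\int_K \log|f|\,dm$ as an element of $[-\infty,\infty)$; it does not rule out the value $-\infty$. Finiteness of the pointwise limit $m$-a.e.\ is not enough: a function that is finite a.e.\ and locally bounded above can still fail to be locally integrable (consider $-|x|^{-1}$ on $\mathbb{R}$, a decreasing limit of the continuous functions $-\min(|x|^{-1}, n)$). So the claim that the a.e.-finiteness of $\log|f|$ ``delivers $\log|f| \in L^1_{\loc}$ directly'' is false as stated, and some genuinely complex-analytic or potential-theoretic input is unavoidable. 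Either of the two fallbacks you sketch earlier does the job: transfer the standard several-complex-variables fact that $\log|F| \in L^1_{\loc}(\mathbb{C}^n)$ for holomorphic $F \not\equiv 0$ through the holomorphic exponential map (under which Haar measure corresponds to Lebesgue measure), or cite a theorem on decreasing limits of subharmonic functions, as the paper does. You should promote one of those to the main argument rather than the shortcut.
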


\begin{proof}
  Let $f_n = \sqrt{|f|^2+\frac{1}{n}}$.  We showed in Lemma
  \ref{holomorphic-lsh} that $f_n \in LSH$.  Now $f_n \downarrow |f|$,
  and so $\log |f|$ is a decreasing limit of subharmonic functions.
  By \cite[Theorem 8.2.7]{blu-book}, $\log |f|$ is therefore weakly
  subharmonic.
\end{proof}

\section{Acknowledgments}

I would like to thank Bruce K.~Driver for suggesting this
problem and drawing attention to the papers \cite{gkl2010,gkl2015}
whose results are extended here.  I would also like to thank Leonard
Gross and Laurent Saloff-Coste for their collaboration on the paper
\cite{eldredge-gross-saloff-coste-dila}, of which this paper is an
outgrowth.  This research was supported by a grant from the
Simons Foundation (\#355659, Nathaniel Eldredge).

\bibliographystyle{plainnat}
\bibliography{allpapers}

\end{document}